\def\@cite#1#2{[{{\bfseries #1}\if@tempswa , #2\fi}]}
\renewcommand{\section}{%
\@startsection{section}{1}{\z@}
{0.5truecm plus -1ex minus -.2ex}%
{1.0ex plus .2ex}{\bfseries\large}}
\def\@seccntformat#1{\csname the#1\endcsname.\ }
\numberwithin{equation}{section} 
\newtheorem{thm}{Theorem}[section]
\newtheorem{corollary}[thm]{Corollary}
\newtheorem{lem}[thm]{Lemma}
\theoremstyle{definition}
\newtheorem{df}{Definition}[section]
\newtheorem{remark}{Remark}[section]
\newtheorem*{prth1.1}{Proof of Theorem 1.1}
\newcommand{\ep}{\varepsilon}
\newcommand{\pa}{\partial}
\newcommand{\norm}[4]{\Vert #1 \Vert _{{#2}^{#3}(#4)}}
\newcommand{\h}{H}
\newcommand{\tmax}{T_{{\rm max},\ep}}
\newcommand{\lp}[2]{\|#2\|_{L^{#1}(\Omega)}}
\newcommand{\nep}{n_{\ep}}
\newcommand{\cep}{c_{\ep}}
\newcommand{\uep}{u_{\ep}}
\newcommand{\io}{\int_\Omega}
\newcommand{\nha}{\widetilde{\nep}}
\newcommand{\cd}{(\cdot,t)}
\newcommand{\iio}{\int_0^T \io}
\newcommand{\abs}{\medskip}
\def\D{\Delta}
\def\om{\Omega}
\def\na{\nabla}
\def\vp{\varphi}
\def\e{\varepsilon}
\def\h{\hspace}
\def\etmax{T_{{\rm max},\e}}
\def\norm#1{\|#1\|}
\begin{document}

\footnote[0]
    {2010{\it Mathematics Subject Classification}\/. 
    Primary: 35K55; Secondary: 92C17; 35Q35.
    }
\footnote[0]
    {{\it Key words and phrases}\/: 
    chemotaxis-Navier--Stokes system; 
    degenerate diffusion; 
    global existence. 
    }
%==========================title==========================
\begin{center}
    \Large{{\bf 
    Global weak solutions to a 3-dimensional  
    degenerate \\and singular chemotaxis-Navier--Stokes system \\
    with logistic source
               }}
\end{center}
\vspace{5pt}
%===========================author=========================
\begin{center}
    Shunsuke Kurima\\
    \vspace{2pt}
    Department of Mathematics, 
    Tokyo University of Science\\
    1-3, Kagurazaka, Shinjuku-ku, Tokyo 162-8601, Japan\\
    {\tt shunsuke.kurima@gmail.com}\\
    \vspace{12pt}
     Masaaki Mizukami\footnote{Partially supported by 
    JSPS Research Fellowships 
    for Young Scientists (No.\ 17J00101).}\footnote{Corresponding author}\\
    \vspace{2pt}
    Department of Mathematics, 
    Tokyo University of Science\\
    1-3, Kagurazaka, Shinjuku-ku, Tokyo 162-8601, Japan\\
    {\tt masaaki.mizukami.math@gmail.com}\\
\end{center}
\begin{center}    
    \small \today
\end{center}

\vspace{2pt}
%=====================  Abstract  =======================
\newenvironment{summary}
{\vspace{.5\baselineskip}\begin{list}{}{%
     \setlength{\baselineskip}{0.85\baselineskip}
     \setlength{\topsep}{0pt}
     \setlength{\leftmargin}{12mm}
     \setlength{\rightmargin}{12mm}
     \setlength{\listparindent}{0mm}
     \setlength{\itemindent}{\listparindent}
     \setlength{\parsep}{0pt} 
     \item\relax}}{\end{list}\vspace{.5\baselineskip}}
\begin{summary}
{\footnotesize {\bf Abstract.}
This paper considers the degenerate and singular chemotaxis-Navier--Stokes system 
with logistic term 
\begin{equation*}
     \begin{cases}
         n_t + u\cdot\nabla n =
           \Delta n^m - \chi\nabla\cdot(n\nabla c) 
          + \kappa n -\mu n^2, 
         &x\in \Omega,\ t>0,
 \\[2mm]
         c_t + u\cdot\nabla c = 
         \Delta c - nc,
          &x \in \Omega,\ t>0,
 \\[2mm]
        u_t + (u\cdot\nabla)u 
          = \Delta u + \nabla P + n\nabla\Phi, 
            \quad \nabla\cdot u = 0, 
          &x \in \Omega,\ t>0, 
     \end{cases} 
 \end{equation*} 
where $\Omega\subset \mathbb{R}^3$ is a bounded domain 
and $\chi,\kappa \ge 0$ and $m, \mu >0$. 
In the above system without fluid environment Jin \cite{Jin-2017} 
showed existence and boundedness of global weak solutions. 
On the other hand, in the above system with $m=1$,  
Lankeit \cite{Lankeit_2016} established global existence of weak solutions. 
However, 
the above system with $m>0$   
has not been studied yet. The purpose of this talk is to establish global existence 
of weak solutions in the chemotaxis-Navier--Stokes system 
with degenerate diffusion and logistic term.
}
\end{summary}
\vspace{10pt}

\newpage
%%==============================================================%%
%%==============                                  ==============%%
%%======                      Section1                    ======%%
%%====                                                      ====%%
%%==                                                          ==%%
%%====　　　　　   　　Introduction Results　      　　　 　====%%
%%======                                                  ======%%
%%==============                                  ==============%%
%%==============================================================%%

\section{Introduction and results}
In the study of partial differential equations 
mathematical models describing the natural phenomena, 
e.g., the heat equation, the Fisher--KPP equation and so on, 
are one of the important topics in the mathematical analysis, 
and studied by many mathematicians. 
Recently, there have been many variations of systems of partial differential equations which describe complicated phenomena. 
One of systems which describe important biological phenomena related to animals life 
is the Keller--Segel system  
\begin{align*}
n_t&=\Delta n^m -\chi\nabla\cdot(n^{q-1} \nabla c),\\ % \quad 
c_t&=\Delta c -c+n,
\qquad x\in \Omega,\ t>0, 
\end{align*}
where $\Omega \subset \mathbb{R}^N$ ($N\in \mathbb{N}$), $m>0$, 
$\chi\ge 0$, $q \ge 2$, 
which describes migration of species by chemotaxis which is the property such that 
species move towards higher concentration of the chemical substance. 
Here 
$\Delta n^m$ with $m=1$, that is, $\Delta n$ 
is called a {\it linear diffusion} and 
$\Delta n^m$ with $m\neq 1$ is called a {\it nonlinear diffusion}.  
In other words, the case that $m>1$ is said to be a {\it degenerate diffusion} or 
a {\it porous medium diffusion}, and the case that $0 < m < 1$ is said to be a 
{\it singular diffusion} or a {\it fast diffusion}.  
This system with $m=1$ and $q=2$ was first proposed by Keller--Segel \cite{K-S} 
and then the system with $m,q\in \mathbb{R}$ was suggested by Hillen--Painter \cite{Hillen_Painter_2009}. 
In the case that $m=1$ and $q=2$ 
it is known that the size of initial data determines behaviour of solutions 
to the 2-dimensional Keller--Segel system. 
More precisely, there exists some constant $C > 0$ such that 
if an initial data $n_0$ satisfies $\lp{1}{n_0} < C$ 
then global bounded classical solutions exist (\cite{Nagai-Senba-Yoshida}), 
moreover, for any $m > C$ there exist initial data such that $\lp{1}{n_0} = m$ 
and the corresponding solution blows up in finite time 
(\cite{Horstmann-Wang,Mizoguchi-Winkler}),  
where $C>0$ can be given as $C = \frac{8\pi}{\chi}$ in the radial setting and 
$C = \frac{4\pi}{\chi}$ in the nonradial setting. 

\abs
In the other dimensional case we can see that there are many global/blow-up solutions. 
In the case that $m=1,q=2$ and $N=1$ Osaki--Yagi \cite{Osaki-Yagi} showed 
that global existence and boundedness hold for all smooth initial data:  
This means that there is not a blow-up solution in the 1-dimensional setting. 
On the other hand, it is known that the 3-dimensional case has many blow-up solutions: 
Winkler \cite{Winkler_2013_blowup} established that for all $m>0$ there are initial data $n_0$ such that $\lp{1}{n_0}=m$ and the corresponding solution blows up in finite time. 
To obtain global existence of classical solutions we need some additional conditions for 
initial data: Winkler \cite{win_aggregationvs} and Cao \cite{Xinru_higher} 
proved existence of global boundedness classical solutions 
under conditions that 
initial data $(n_0,c_0)$ is 
small enough with respect to a suitable Lebesgue norm. 

\abs
On the other hand, in the case that $m\ge 1$ and $q\ge 2$, 
it is known that relations between $m$ and $q$ determine 
whether solutions of the Keller--Segel system 
exist globally or not; 
in the case that $m > q - \frac{2}{N}$ Ishida--Seki--Yokota \cite{Ishida-Seki-Yokota} 
obtained global existence of solutions; 
on the other hand, in the case that $m < q- \frac 2N$ 
Ishida--Yokota \cite{Ishida-Yokota_2013} proved that 
there exist initial data such that the corresponding solution blows up 
in finite or infinite time, and recently Hashira--Ishida--Yokota \cite{Hashira-Ishida-Yokota} found 
initial data such that the solution blows up in finite time.   

\abs
As a generalized problem, the nonlinear Keller--Segel system with logistic source 
which the first equation in the above system is replaced with
\begin{align*}
n_t=\Delta n^m -\chi\nabla\cdot(n^{q-1}\nabla c) +\kappa n -\mu n^2,  
\end{align*}
where $m,\mu>0$, $\chi,\kappa \ge 0$, $q\ge 2$,  
was also studied, and it is shown that 
the logistic source $\kappa n -\mu n^2$ suppresses blow-up phenomenon. 
When $m=1$ and $q=2$, in the 2-dimensional setting 
Osaki et al.\ \cite{OTYM} obtained  
global existence and boundedness 
for all smooth initial data, 
and in the higher dimensional setting 
Winkler \cite{Winkler_2010_logistic} proved  
existence of global classical solutions under largeness conditions for $\mu>0$; 
in the Keller--Segel system with logistic source global existence of solutions holds   
even though the $L^1$-norm of the initial data is large enough.   
Moreover, Lankeit \cite{lankeit_evsmoothness} established global existence of weak solutions 
without largeness condition for $\mu >0$. 
On the other hand, in the case that $q=2$ a resent result established by 
Zheng--Wang \cite{Zheng-Wang_2016} 
asserted that the condition that 
$m > 1+ \frac{(N-2)_+}{N+2}$ derives global existence and boundedness.  

\abs
As we confirmed before, 
in the Keller--Segel system 
the relation between $m$ and $q$ %, 
%which means the relation between the diffusion and the chemotaxis, 
strongly affects behaviour of solutions, 
and 
%the logistic source suppresses blow-up phenomenon, 
%which means that 
the logistic source often relaxes conditions for global existence. 
Thus ``what is the condition for $m$ to derive existence of global/blow-up solutions?'' 
makes one of the main topics. %in the study of the Keller--Segel system. 
This will be also one of the important topics in the study of 
the chemotaxis-(Navier--)Stokes system 
with logistic source 
\begin{align*}
  n_t + u\cdot\nabla n 
&
  =\Delta n^m -\chi\nabla\cdot(n^{q-1}\nabla c) 
  + \kappa n - \mu n^2, 
\\
  c_t + u\cdot\nabla c 
& 
  =\Delta c -nc, 
\\ 
  u_t + \lambda(u\cdot\nabla)u 
& 
  = \Delta u +\nabla P +n\nabla \Phi, 
\ 
  \nabla \cdot u=0, 
\quad 
  x\in \Omega,\ t>0, 
\end{align*}
where $m,\mu >0$, $\chi ,\kappa \ge0$, $q\ge 2$, $\lambda =0$ (the chemotaxis-Stokes system)  
or $\lambda = 1$ (the chemotaxis-Navier--Stokes system). 
In the case that $m=1$, $q=2$ and moreover $\kappa=\mu=0$ 
Winkler established global existence of classical solutions 
in the 2-dimensional setting (\cite{W-2012}) 
and obtained existence of global weak solutions 
in the 3-dimensional setting (\cite{Winkler_2016}).   
In the case that $q=2$ and $\kappa=\mu=0$ 
Tao--Winkler \cite{Tao-Winkler_2012_KSF} 
and Chung--Kang \cite{Chung-Kang_2016} 
showed global existence under the condition that $m>1$.  

\abs
On the other hand, in the chemotaxis-(Navier--)Stokes system with logistic source 
results similar to those in the Keller--Segel system with logistic source hold. 
In the case of the fluid-free system 
Lankeit--Wang \cite{Lankeit-Wang_2017} showed global existence and boundedness 
in the system with $m=1$ 
under some largeness conditions for $\mu>0$, 
and Jin \cite{Jin-2017} proved global existence of weak solutions 
of the system with $m>1$. 
In the system with fluid equation and $m=1$, $q=2$ 
global existence and boundedness of classical solutions hold 
in the 2-dimensional setting (cf.\ \cite{HKMY_1}) 
and existence of global bounded classical solutions to the 
chemotaxis-Stokes system holds under some largeness conditions for $\mu>0$
in the 3-dimensional setting (cf. \cite{CKM}),   
and moreover Lankeit \cite{Lankeit_2016} showed that 
global existence of weal solutions in the system without largeness conditions for $\mu>0$. 
However, the chemotaxis-Navier--Stokes system with degenerate diffusion and logistic source 
has never been tried; which means that conditions of $m$ for global existence of weak solutions 
are still open problem. 
The purpose of this paper is to 
derive some condition of $m$  
for existence of global weak solutions 
in the degenerate chemotaxis-Navier--Stokes system with logistic source. 
Here we note that the methods in \cite{Jin-2017} cannot be applied to 
this problem because of the difficulty of the Navier--Stokes equation (explained later). 

%-----------------------------------------------------------------
%
%         　　　　　　Introduction of (P)
%
%-----------------------------------------------------------------
\abs
In order to attain this purpose 
we consider the following degenerate and singular chemotaxis-Navier--Stokes system 
with logistic term:
 \begin{equation}\label{P}
     \begin{cases}
         n_t + u\cdot\nabla n = 
         \Delta n^m - \chi\nabla\cdot(n\nabla c) 
          + \kappa n -\mu n^2,
         &x\in \Omega,\ t>0,
 \\[2mm]
         c_t + u\cdot\nabla c = \Delta c -nc,
          &x \in \Omega,\ t>0,
 \\[2mm]
        u_t  + (u\cdot\nabla) u 
          = \Delta u + \nabla P + n\nabla\Phi, 
            \quad \nabla\cdot u = 0, 
          &x \in \Omega,\ t>0, 
 \\[2mm]
        \partial_\nu n^m = \partial_\nu c = 0, \quad 
        u = 0, 
        &x \in \partial\Omega,\ t>0, 
 \\[2mm]
        n(x,0)=n_{0}(x),\ 
        c(x,0)=c_0(x),\ u(x,0)=u_0(x), 
        &x \in \Omega,
     \end{cases}
 \end{equation}
\noindent
where $\Omega$ is a bounded domain 
in $\mathbb{R}^3$ with smooth boundary $\partial\Omega$ and 
$\partial_\nu$ denotes differentiation with respect to the 
outward normal of $\partial\Omega$; 
$\chi, \kappa\ge 0$ and 
$\mu, m>0$ are constants;  
$n_0, c_0, u_0, \Phi$ 
are known functions satisfying
 \begin{align}\label{condi;ini1}
   &0 < n_0 
   \in C(\overline{\Omega}), 
 \quad 
   0 < c_0 \in W^{1,q}(\Omega), 
 \quad 
   u_0 \in D(A^{\theta}), \\ \label{condi;ini2}
   &\Phi \in C^{1+\beta}(\overline{\Omega}) 
 \end{align}
for some $q > 3$, 
$\theta \in \left(\frac{3}{4}, 1\right)$, 
$\beta > 0$ and 
$A$ denotes the realization of the Stokes operator 
under homogeneous Dirichlet boundary conditions 
in the solenoidal subspace $L_{\sigma}^2(\Omega)$ of $L^2(\Omega)$.

Before stating the main theorem, we define weak solutions of \eqref{P}. 
%------------------def. of weak sol.----------------------%
\begin{df}\label{def;weaksol}
 A  triplet $(n, c, u)$ is called 
  a $($global$)$ {\it weak solution} of \eqref{P} if 
  \begin{align*}
    n 
    &\in L^2_{\rm loc}([0,\infty);L^2(\Omega)), 
   \\
   n^m
    & \in L^{\frac{4}{3}}_{\rm loc}([0,\infty);W^{1,\frac{4}{3}}(\Omega)),
   \\
    c
    &\in L^2_{\rm loc}([0,\infty);W^{1,2}(\Omega)),
   \\
    u
    &\in L^2_{\rm loc}([0,\infty);W^{1,2}_{0,\sigma}(\Omega))
  \end{align*}
  and the identities
  \begin{align*}
%------------------------  \n1  ----------------------------
    &-\int^\infty_0\!\!\!\!\int_\Omega n\vp_t
     -\int_\Omega n_0\vp(\cdot,0)
     -\int^\infty_0\!\!\!\!\int_\Omega n u\cdot\na\vp
   \\
    &\h{7.0mm}=-\int^\infty_0\!\!\!\!\int_\Omega\na n^{m} \cdot\na\vp
      +\chi\int^\infty_0\!\!\!\!\int_\Omega n \na c\cdot\na\vp
      +\int^\infty_0\!\!\!\!\int_\Omega (\kappa n - \mu n^2)\vp,
   \\[1.5mm]
%-------------------------  c  -----------------------------
    &-\int^\infty_0\!\!\!\!\int_\Omega c\vp_t
     -\int_\Omega c_0\vp(\cdot,0)
     -\int^\infty_0\!\!\!\!\int_\Omega cu\cdot\na\vp
   %\\
    %&\h{7.0mm}
   =-\int^\infty_0\!\!\!\!\int_\Omega\na c\cdot\na\vp
      -\int^\infty_0\!\!\!\!\int_\Omega nc\vp,
   \\[1.5mm]
%-------------------------  u  -----------------------------
    &-\int^\infty_0\!\!\!\!\int_\Omega u\cdot\psi_t
     -\int_\Omega u_0\cdot\psi(\cdot,0)
     -\int^\infty_0\!\!\!\!\int_\Omega u\otimes u\cdot\na\psi
   \\
    &\h{7.0mm}=-\int^\infty_0\!\!\!\!\int_\Omega\na u\cdot\na\psi
     +\int^\infty_0\!\!\!\!\int_\Omega n\na\psi\cdot\na\Phi
  \end{align*}
  hold for all $\vp\in C^{\infty}_0(\overline{\Omega}\times [0,\infty))$
  and all $\psi\in C^{\infty}_{0,\sigma}(\Omega \times [0,\infty))$,
  respectively.
\end{df}
%------------------def. of weak sol.----------------------%

The main result reads as follows. 
The following theorem gives existence of global weak solutions to \eqref{P}. 
%
%%%%%%%%%%%%%%%%%%%%%%%%%  Theorem1.1  %%%%%%%%%%%%%%%%%%%%%%%%%%
\begin{thm}\label{mainthm1}
  Let $\Omega\subset\mathbb{R}^3$ be a bounded smooth domain and   
  let $\chi, \kappa \geq 0$ and 
  $\mu, m>0$. 
  Assume that $n_0, c_0, u_0$ and $\Phi$ satisfy \eqref{condi;ini1}{\rm --}\eqref{condi;ini2} 
  with some $q>3$, $\theta \in(\frac{3}{4},1)$ and $\beta \in (0,1)$.
  Then there exists a weak solution of \eqref{P},
  which can be approximated by a sequence of solutions 
  $(n_{\ep} ,c_{\ep}, u_{\ep})$ of an approximate problem  $($see Section \ref{Sec2}\/$)$ 
  in a pointwise manner.
\end{thm}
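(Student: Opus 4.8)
The plan is to construct weak solutions of \eqref{P} as limits of solutions of a suitably regularized problem, following the by now standard strategy for degenerate chemotaxis-fluid systems. First I would introduce an approximate problem in which the degenerate diffusion $\Delta n^m$ is replaced by a non-degenerate one, e.g.\ $\nabla\cdot\bigl((n+\ep)^{m-1}\nabla n\bigr)$ (or $\Delta(n+\ep)^m$), the chemotactic sensitivity is truncated so that it remains bounded, and the Navier--Stokes nonlinearity is regularized via the standard Yosida-type operator $Y_\ep = (1+\ep A)^{-1}$ acting on $u$; for such a problem, local-in-time classical solvability follows from fixed-point arguments together with parabolic and Stokes semigroup theory, and the logistic term $\kappa n - \mu n^2$ together with mass-type estimates yields global existence of the approximations $(\nep,\cep,\uep)$. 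Then I would derive a set of $\ep$-independent a priori bounds: the logistic absorption gives control of $\nep$ in $L^2_{\rm loc}([0,\infty);L^2(\Omega))$ after testing the first equation with $1$ (mass/Gronwall) and with $\nep$ itself; the second equation, tested with $\cep$ and using $\|\cep\|_{L^\infty} \le \|c_0\|_{L^\infty}$ from the maximum principle, gives $\cep$ bounded in $L^2_{\rm loc}([0,\infty);W^{1,2}(\Omega))$; and the third equation, tested with $\uep$ (the trilinear term vanishes) together with the forcing bound $\int \nep \uep\cdot\nabla\Phi$, gives $\uep$ bounded in $L^\infty_{\rm loc}(L^2)\cap L^2_{\rm loc}(W^{1,2}_{0,\sigma})$. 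The crucial additional estimate is a bound on $\nabla n_\ep^m$ in $L^{4/3}_{\rm loc}([0,\infty);L^{4/3}(\Omega))$; this is obtained by testing the (regularized) first equation with a suitable power of $\nep$, producing a weighted gradient term $\int (\nep+\ep)^{m-1}|\nabla\nep|^2$ that, combined with the $L^2$-in-space-time bound on $\nep$ and Hölder's inequality, upgrades to control of $\nabla\nep^m$ in $L^{4/3}$ in space-time.

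Next I would establish compactness. From the PDEs one reads off $\ep$-uniform bounds on the time derivatives $\partial_t \nep$, $\partial_t \cep$, $\partial_t \uep$ in negative-order Sobolev spaces (using the spatial bounds just listed to estimate each term on the right-hand side, including $\nep\uep$, $\nep\nabla\cep$ and $\uep\otimes\uep$). An Aubin--Lions--Simon argument then yields a subsequence along which $\nep \to n$, $\cep\to c$, $\uep \to u$ strongly in $L^2_{\rm loc}([0,\infty);L^2(\Omega))$ and a.e.\ in $\Omega\times(0,\infty)$, with the weak convergences $\nabla\nep^m \rightharpoonup \nabla n^m$ in $L^{4/3}_{\rm loc}$, $\nabla\cep\rightharpoonup\nabla c$ and $\nabla\uep\rightharpoonup\nabla u$ in $L^2_{\rm loc}$. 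Identifying the weak limit of $\nabla\nep^m$ with $\nabla n^m$ uses the pointwise convergence of $\nep$ together with the uniform $L^{4/3}$-bound. The pointwise (and $L^2_{\rm loc}$-strong) convergence of $\nep$ is also what allows passage to the limit in the nonlinear reaction term $\kappa\nep - \mu\nep^2$ and in the products $\nep u_\ep\cdot\nabla\vp$, $\nep\nabla\cep\cdot\nabla\vp$, $\cep u_\ep$ and $\nep c_\ep$; for the Navier--Stokes nonlinearity one uses $\uep\to u$ strongly in $L^2_{\rm loc}(L^2)$ and $\uep$ bounded in $L^2_{\rm loc}(W^{1,2})$, hence (by interpolation in 3D) $\uep\otimes\uep \to u\otimes u$ in $L^1_{\rm loc}$, which suffices to pass to the limit in the weak formulation tested against smooth divergence-free $\psi$.

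Finally I would verify that the limit triplet $(n,c,u)$ has the regularity demanded in Definition \ref{def;weaksol} — which is exactly the list of function spaces in which the uniform estimates were obtained — and that it satisfies the three integral identities, by taking $\ep\to 0$ in the corresponding identities for the approximate solutions (which hold since those are classical solutions); the initial-data terms pass to the limit because $\nep(\cdot,0)=n_0$, $\cep(\cdot,0)=c_0$, $\uep(\cdot,0)=u_0$ are fixed. Positivity of $n$ and $c$, if desired, follows from that of the approximations via a.e.\ convergence. The main obstacle, and the point where the Navier--Stokes structure genuinely complicates matters compared with the fluid-free case of \cite{Jin-2017}, is obtaining the degenerate-diffusion gradient estimate on $\nabla\nep^m$ \emph{with a time-integrability exponent ($4/3$) that is simultaneously compatible with the low regularity forced by the $u\cdot\nabla n$ convective term and strong enough to identify the limit}; balancing the testing-function power against the available $L^2_{t,x}$-bound on $\nep$ and the $L^2_t(W^{1,2})$-bound on $\uep$ is the delicate computation, and it is what pins down the admissible range of $m$. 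Once that estimate is in hand, the compactness and limit-passage steps are routine.
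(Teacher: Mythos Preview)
Your outline captures the right overall architecture (regularize, derive $\ep$-uniform estimates, pass to the limit via Aubin--Lions), but there is a genuine gap in the a priori estimates. Testing the second equation with $c_\ep$ only yields $\nabla c_\ep\in L^2_{t,x}$, and this is \emph{not} enough to close the gradient estimate for $n_\ep$: when you test the first equation with any power of $n_\ep$ (or with $\log n_\ep$) and try to absorb the chemotactic cross-term $\chi\int\frac{n_\ep^{p-1}}{1+\ep n_\ep}\nabla n_\ep\cdot\nabla c_\ep$ via Young's inequality, the residual piece contains a factor of the form $\int n_\ep^\alpha|\nabla c_\ep|^2$ with $\alpha>0$, which cannot be controlled using only $n_\ep\in L^2_{t,x}$ and $\nabla c_\ep\in L^2_{t,x}$. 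The paper closes this by working with the coupled entropy functional
\[
\mathcal{F}_\ep(t)=\int_\Omega n_\ep\log n_\ep+\frac{\chi}{2}\int_\Omega\frac{|\nabla c_\ep|^2}{c_\ep}+K\chi\int_\Omega|u_\ep|^2,
\]
in which the chemotaxis cross-terms arising from $\frac{d}{dt}\int n_\ep\log n_\ep$ and from $\frac{d}{dt}\int\frac{|\nabla c_\ep|^2}{c_\ep}$ \emph{cancel}. This is what produces the crucial $L^4_{t,x}$ bound on $\nabla c_\ep$ (via $\int\frac{|\nabla c_\ep|^4}{c_\ep^3}$) and the bound on $\int n_\ep^2\log n_\ep$, both of which are used repeatedly afterwards. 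Without this step your scheme does not close.

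There is a second, related issue in the compactness step. You propose Aubin--Lions directly on $n_\ep$, but the spatial regularity available is on $(n_\ep+\ep)^m$, not on $n_\ep$ itself, so the triple of spaces does not fit. The paper instead applies Aubin--Lions to the auxiliary quantity $(n_\ep+\ep)^{m/2}$, for which it establishes both a bound in $L^2_{\rm loc}([0,\infty);W^{1,2}(\Omega))$ (a consequence of the entropy estimate) and a bound on $\partial_t(n_\ep+\ep)^{m/2}$ in $L^1_{\rm loc}([0,\infty);(W^{2,4}_0(\Omega))^\ast)$; the latter requires additional estimates on $\int(n_\ep+\ep)^{2m-3}|\nabla n_\ep|^2$ and $\int(n_\ep+\ep)^{2m-4}|\nabla n_\ep|^2$, obtained by separate testing procedures and case distinctions in $m$. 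Strong $L^2$ convergence of $n_\ep$ itself is then recovered not from Aubin--Lions but from a.e.\ convergence plus a Dunford--Pettis uniform-integrability argument based on $\int n_\ep^2\log n_\ep\le C$. Finally, your closing remark that the balancing ``pins down the admissible range of $m$'' is a misconception: the theorem holds for \emph{all} $m>0$, and the point of the entropy-functional route is precisely that it removes any restriction on $m$.
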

%%%%%%%%%%%%%%%%%%%%%%%%%  Theorem1.1  %%%%%%%%%%%%%%%%%%%%%%%%%%

%%%%%%%%%%%%%%%%%%%%%%%%%  Remark  %%%%%%%%%%%%%%%%%%%%%%%%%%
\begin{remark}
This result means existence of weak solutions to \eqref{P} for {\it all} $m>0$;  
which implies that we could construct weak solutions of \eqref{P} 
in not only the case that $m>1$ (the case of a porous medium diffusion) 
but also $0 < m <1$ (the case of a fast diffusion). 
\end{remark}
%%%%%%%%%%%%%%%%%%%%%%%%%  Remark  %%%%%%%%%%%%%%%%%%%%%%%%%%

The proof of Theorem \ref{mainthm1} can be applied to 
a nondegenerate chemotaxis-Navier--Stokes system which namely 
is the case that $\Delta n^m$ is replaced with $\Delta (n+1)^m$, 
and enables us to see the following result. 

%%%%%%%%%%%%%%%%%%%%%%%%%  Cor.1.2  %%%%%%%%%%%%%%%%%%%%%%%%%%
\begin{corollary}\label{cor}
  Let $\Omega\subset\mathbb{R}^3$ be a bounded smooth domain and   
  let $\chi, \kappa \geq 0$ and 
  $ \mu, m>0$. 
  Assume that $n_0, c_0, u_0$ and $\Phi$ satisfy \eqref{condi;ini1}{\rm --}\eqref{condi;ini2} 
  with some $q>3$, $\theta \in(\frac{3}{4},1)$ and $\beta \in (0,1)$.
  Then there exists a weak solution of 
  the nondegenerate chemotaxis-Navier--Stokes system.
\end{corollary}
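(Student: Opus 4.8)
\textbf{Proof proposal for Corollary~\ref{cor}.}
The plan is to run the proof of Theorem~\ref{mainthm1} verbatim, with the single change that in the approximate problem of Section~\ref{Sec2} the (regularized) degenerate diffusion term is replaced by the corresponding nondegenerate one built from $(n_\ep+1)^m$; all the remaining terms --- the chemotactic term $\chi\nabla\cdot(n_\ep\nabla c_\ep)$, the logistic term $\kappa n_\ep-\mu n_\ep^2$, the coupling $-n_\ep c_\ep$, and the Navier--Stokes part with forcing $n_\ep\nabla\Phi$ --- are left untouched. Thus the scheme is: (i) obtain local-in-time approximate solutions $(n_\ep,c_\ep,u_\ep)$ exactly as before; (ii) derive the same family of $\ep$-independent a priori estimates; (iii) pass to the limit $\ep\to0$ by the same compactness and pointwise-convergence argument, arriving at a triplet $(n,c,u)$ that satisfies Definition~\ref{def;weaksol} with $\nabla n^m$ replaced by $\nabla(n+1)^m$.

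For step (ii) one checks that every estimate used in the proof of Theorem~\ref{mainthm1} is stable under the substitution $s^m\rightsquigarrow(s+1)^m$. The $L^1(\Omega)$-bound for $n_\ep$ and the space--time $L^2$-bound coming from the logistic dissipation are insensitive to the form of the diffusion; the pointwise bounds $0\le c_\ep\le\lpc$, together with the higher gradient estimates for $c_\ep$ and the $D(A^\theta)$-type estimates for $u_\ep$ obtained via the Stokes semigroup, are unaffected as well. The only place where the diffusion enters is the estimate producing $n^m\in L^{\frac{4}{3}}_{\rm loc}([0,\infty);W^{1,\frac{4}{3}}(\Omega))$: testing the first equation against $(n_\ep+1)$ (in place of the test function used in the proof of Theorem~\ref{mainthm1}) gives a uniform bound for $\nabla(n_\ep+1)^{\frac{m+1}{2}}$ in $L^2$, and then Hölder's inequality with exponents $4$ and $2$, combined with the already available $L^2$-bound on $n_\ep$, yields a uniform bound for $\nabla(n_\ep+1)^m$ in $L^{\frac43}$; here one uses that $(s+1)^{m-1}$ is bounded on the admissible range of $s$ when $m\ge1$ and bounded by $1$ when $0<m<1$, and that $(s+1)^m-s^m$ is globally controlled, so that no constant blows up as $\ep\to0$.

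For step (iii), the Aubin--Lions--Simon type compactness arguments already used go through unchanged, furnishing a subsequence along which $n_\ep\to n$ strongly in $L^2_{\rm loc}([0,\infty);L^2(\Omega))$ and a.e., $c_\ep\to c$, $u_\ep\to u$ in the relevant spaces; consequently $(n_\ep+1)^m\to(n+1)^m$ strongly in $L^1_{\rm loc}$ and, together with the uniform $L^{\frac43}(W^{1,\frac43})$-bound, $\nabla(n_\ep+1)^m\rightharpoonup\nabla(n+1)^m$. All nonlinear terms in the weak formulation then pass to the limit exactly as in the proof of Theorem~\ref{mainthm1}, which proves the corollary.

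\textbf{Main obstacle.} There is no genuinely new difficulty; the work is bookkeeping. One must confirm that the degeneracy (for $m>1$) and singularity (for $0<m<1$) of $s\mapsto s^m$ at $s=0$ were never exploited in an essential way in the proof of Theorem~\ref{mainthm1}, so that trading $s^m$ for the globally better-behaved $(s+1)^m$ can only simplify the argument. The one step deserving explicit re-examination is the derivation of the $W^{1,\frac43}$-regularity of the diffused density, where one has to make sure the constants stay $\ep$-independent after the substitution --- which is exactly the point addressed in step (ii).
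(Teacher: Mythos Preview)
Your overall plan is correct and matches the paper's own treatment: the paper does not give a separate proof of Corollary~\ref{cor} but simply remarks that the proof of Theorem~\ref{mainthm1} carries over when $\Delta n^m$ is replaced by $\Delta(n+1)^m$. Your sketch of steps (i)--(iii) is the right way to unpack that remark, and indeed the substitution $n_\ep+\ep \rightsquigarrow n_\ep+1$ (or $n_\ep+1+\ep$) in the approximate problem only improves the behaviour at zero, so the energy estimate of Lemmas~\ref{ne}--\ref{me} and the compactness machinery of Section~\ref{Sec4} go through with cosmetic changes.

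There is, however, a genuine slip in your step~(ii). You claim that the $L^{4/3}(W^{1,4/3})$ bound for $(n_\ep+1)^m$ follows from H\"older with exponents $4$ and $2$, using that ``$(s+1)^{m-1}$ is bounded on the admissible range of $s$ when $m\ge1$''. This is false: for $m>1$ the factor $(s+1)^{m-1}$ is unbounded as $s\to\infty$, and you do not have $L^\infty$ control on $n_\ep$. The correct route is the one the paper actually uses in Lemma~\ref{inu1}: write $|\nabla(n_\ep+1)^m|^{4/3}=m^{4/3}(n_\ep+1)^{4(m-1)/3}|\nabla n_\ep|^{4/3}$ and apply Young's inequality with exponents $3$ and $3/2$ to obtain
\[
\int_0^T\!\!\int_\Omega |\nabla(n_\ep+1)^m|^{4/3}
\le \int_0^T\!\!\int_\Omega (n_\ep+1)^2
+C\int_0^T\!\!\int_\Omega (n_\ep+1)^{2m-3}|\nabla n_\ep|^2,
\]
where the first term is controlled by Lemma~\ref{pote1} and the second by the (nondegenerate analogue of) Lemma~\ref{tori}. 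Likewise, the energy-type test function in the nondegenerate setting should be $\log(n_\ep+1)$ (or one simply replaces $\ep$ by $1$ throughout Lemmas~\ref{ne}, \ref{tool1}--\ref{tool4}), not ``$(n_\ep+1)$'' as you wrote. Once this is corrected, your proposal is complete.
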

%%%%%%%%%%%%%%%%%%%%%%%%%  Cor.1.2  %%%%%%%%%%%%%%%%%%%%%%%%%%

%\vspace{10pt}
The strategy for the proof of Theorem \ref{mainthm1} 
is described as follows.
%use the Moser iteration when we prove existence of 
%global classical solutions to the approximate of \eqref{P}  
%because standard smoothing estimates for the Neumann heat semigroup 
%cannot be applied to the first equation in the approximate of \eqref{P}.
We start with the construction of local approximate solutions of \eqref{P}. 
We next derive estimates for the approximate solution. 
Thanks to the estimates, we extend the local approximate solution globally in time. 
Finally, passing to the limit from the global approximate solution, 
we obtain the desired global weak solution. %
In the previous work by Jin \cite{Jin-2017} which deals with the fluid-free case,  
aided by uniform $L^\infty $-estimates, 
we can attain convergences of approximate solutions.  
Now, because of the difficulty of the Navier--Stokes equation, 
we could not expect $L^\infty$-boundedness of solutions; 
thus we have to need different methods to consider this problem.  
More precisely, we rely on the Lions--Aubin lemma. 
In this strategy the key is to establish estimates for $\nabla n_{\ep}$, 
where $n_{\ep}$ is the solution of the first equation in the approximate problem 
and $\ep$ is a positive parameter. 
More precisely, generalizing calculations in \cite{Lankeit_2016}, 
we first obtain 
\begin{equation}\label{test}
  \int_{0}^{T}\int_{\Omega}
  \frac{(n_{\ep}+\ep)^{m-1}|\nabla n_{\ep}|^{2}}{n_{\ep}} 
=
  \left( \frac{2}{m+1} \right)^2
  \int_{0}^{T}\int_{\Omega}
  \frac{|\nabla (n_{\ep}+\ep)^{\frac{m+1}{2}}|^{2}}{n_{\ep}}  
\leq C_1(T)
\end{equation}
for all $\ep \in (0, 1)$ with some constant $C_1(T)>0$. 
Aided by this estimate we can have that $((\nep+ \ep)^\frac{m+1}{2} )_{\ep\in (0,1)}$ 
is bounded in $L^\frac{4}{3}([0,T);W^{1,\frac{4}{3}}(\Omega))$; however, it seems to be 
difficult to obtain the estimate for $\pa_t (\nep+\e)^\frac{m+1}{2}$ for all $m>0$.  
Thus we need additional estimates for approximate solutions. 
Here the inequality \eqref{test} ensures that 
\begin{align*}
\int_{0}^{T}\int_{\Omega}(n_{\ep}+\ep)^{m-2}|\nabla n_{\ep}|^{2}
\leq 
\int_{0}^{T}\int_{\Omega}
\frac{(n_{\ep}+\ep)^{m-1}|\nabla n_{\ep}|^{2}}{n_{\ep}} 
\leq C_1(T) 
\end{align*}
for all $\ep \in (0, 1)$. 
This together with the identity  
\[
\int_{0}^{T}\int_{\Omega}|\nabla (n_{\ep}+\ep)^{\frac{m}{2}}|^{2} 
= 
\frac{m^2}4
\int_{0}^{T}\int_{\Omega}(n_{\ep}+\ep)^{m-2}|\nabla n_{\ep}|^{2}
\]
means that 
$\bigl((n_{\ep}+\ep)^{\frac{m}{2}}\bigr)_{\ep \in (0, 1)}$ 
is bounded in $L^2_{\rm loc}([0, \infty); W^{1, 2}(\Omega))$. 
We moreover see that 
$$
\|\pa_t (n_{\ep}+\ep)^{\frac{m}{2}}  \|_{L^1(0, T; (W^{2, 4}_{0}(\Omega))^{*})} 
\leq C_2(T)
$$
for all $\ep \in (0, 1)$ with some $C_2(T)>0$, 
which derives that
$(\partial_{t}(n_{\ep}+\ep)^{\frac{m}{2}})_{\ep \in (0, 1)}$ 
is bounded in $L^1(0, T; (W^{2, 4}_{0}(\Omega))^{*})$.  
Then, aided by the Lions--Aubin theorem, 
we can show convergences of 
solutions of the approximation of \eqref{P} 
and we can prove Theorem \ref{mainthm1}.  

The plan of this paper is as follows. 
In Section \ref{Sec2} we introduce the approximate problem \eqref{Pe} 
of \eqref{P}
and establish global existence in \eqref{Pe}. 
In Section \ref{Sec3} 
we show the several estimates for solutions to the approximate problem of \eqref{P}. 
Finally, in Section \ref{Sec4} 
we prove Theorem \ref{mainthm1} by passage to the limit 
in the approximate problem via estimates from Section \ref{Sec3}. 

%%==============================================================%%
%%==============                                  ==============%%
%%======                      Section2                    ======%%
%%====                                                      ====%%
%%==                                                          ==%%
%%====　　　　　   　　    Preliminaries   　      　　　 　====%%
%%======                                                  ======%%
%%==============                                  ==============%%
%%==============================================================%%

\section{Global existence in an approximate problem}\label{Sec2}

%%%%%%%%%%%%%%%%%%%%%%%%%%%%  (Pe)  %%%%%%%%%%%%%%%%%%%%%%%%%%%%%%
We start by considering the following approximate problem with parameter $\e >0$:  
\begin{equation}\label{Pe}
  \begin{cases}
     n_{\ep t}+u_\e\cdot\na n_{\ep} 
     =   \Delta (n_{\ep}+\ep)^m
     -\chi\na\cdot\big(\frac{n_{\ep}}{1+\e n_{\ep}}\na c_\e\big)
     +\kappa n_{\ep} -\mu n_{\ep}^2,
  \\[2mm]
     c_{\e t}+u_\e\cdot\na c_\e
     =\D c_\e
     -c_\e\frac{1}{\e}\log\big(1+\e n_{\ep}\big),
  \\[2mm]
     u_{\e t}+(Y_\e u_\e\cdot\na)u_\e
     =\D u_\e
     +\na P_\e
     + n_{\ep}\na\Phi,
     \quad\na\cdot u_\e=0,
  \\[2mm]
     \partial_\nu n_{\ep}|_{\partial\om}
     =\partial_\nu c_\e|_{\partial\om}=0,
     \quad u_\e|_{\partial\om}=0,
  \\[2mm]
     n_{\ep}(\cdot,0)=n_{0},\quad
     c_\e(\cdot,0)=c_0,\quad
     u_\e(\cdot,0)=u_0,
  \end{cases}
\end{equation}
%%%%%%%%%%%%%%%%%%%%%%%%%%%%%%%%%%%%%%%%%%%%%%%%%%%%%%%%%%%%%%%%%
where $Y_\e=(1+\e A)^{-1}$. 
In this section we shall show global existence of solutions to the approximate problem \eqref{Pe}. 
We first give the following result which states 
local existence in \eqref{Pe}. 

\smallskip

%
%%%%%%%%%%%%%%%%%%%%%%%%%%  Lemma 2.1  %%%%%%%%%%%%%%%%%%%%%%%%%%
\begin{lem}\label{localsol}
  Let $\chi, \kappa \geq0$, 
  $\mu>0$, $m>0 $ and let 
  $\Phi\in C^{1+\beta}(\overline{\om})$ for some $\beta \in (0,1)$.  
  Assume that $n_0,c_0,u_0$ satisfy \eqref{condi;ini1} 
  with some $q>3,\theta \in(\frac{3}{4},1)$.  
  Then for each $\e > 0$ there exist $\etmax$ and 
  uniquely determined functions\/{\rm :}
    \begin{align*}
      n_{\ep}
      &\in C^0(\overline{\om}\times[0,\etmax))
       \cap C^{2,1}(\overline{\om}\times(0,\etmax)),
    \\
       c_\e
      &\in C^0(\overline{\om}\times[0,\etmax))
       \cap C^{2,1}(\overline{\om}\times(0,\etmax))
       \cap L^\infty_{\rm loc}([0,\etmax);W^{1,q}(\om)),
    \\
       u_\e
      &\in C^0(\overline{\om}\times[0,\etmax)) 
       \cap C^{2,1}(\overline{\om}\times(0,\etmax)),
    \end{align*}
  which together with some 
  $P_\e\in C^{1,0}(\overline{\om}\times(0,\etmax))$ 
  solve \eqref{Pe} classically. 
  Moreover, $n_{\ep}$ and $c_\e$ are positive 
  and the following alternative holds\/{\rm :} 
  $\etmax=\infty$ or
  \begin{align*}%\label{extension}
    %\notag
     \norm{n_{\e}(\cdot,t)}_{L^{\infty}(\om)}
    +\norm{c_\e(\cdot,t)}_{W^{1,q}(\om)}
    +\norm{A^\theta u_\e(\cdot,t)}_{L^2(\om)}
    \to  \infty
  \end{align*}
as $t\nearrow \etmax$.
\end{lem}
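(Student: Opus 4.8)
The plan is to prove the lemma by a contraction-mapping argument on a short time interval, followed by a parabolic bootstrap to classical regularity, the maximum principle for positivity, and the standard continuation argument for the dichotomy. Everything hinges on the fact that, for each fixed $\e>0$, the regularizations render the problem non-degenerate and the nonlinearities Lipschitz on bounded sets: $\xi\mapsto(\xi+\e)^m$ is smooth on $[0,\infty)$ and $m(\xi+\e)^{m-1}$ lies between two positive constants depending only on $\e$, $m$ and a bound for $\xi$; the maps $\xi\mapsto\frac{\xi}{1+\e\xi}$ and $\xi\mapsto\frac1\e\log(1+\e\xi)$ are smooth and bounded together with their derivatives on $[0,\infty)$; and $Y_\e=(1+\e A)^{-1}$ maps $L^2_\sigma(\om)$ boundedly into $D(A)\hookrightarrow L^\infty(\om)$.

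For the fixed point, fix $q>3$ and $\theta\in(\tfrac34,1)$, and for $T\in(0,1)$ put
\begin{equation*}
  X_T:=C^0([0,T];C(\overline\om))\times C^0([0,T];W^{1,q}(\om))\times C^0([0,T];D(A^{\theta})).
\end{equation*}
On a suitable closed ball $\mathcal S_T\subset X_T$ centred at the time-independent triple $(n_0,c_0,u_0)$ I would define a map $\mathcal F$ sending $(\bar n,\bar c,\bar u)$ to $(n,c,u)$, where $n$ solves the linear Neumann heat problem with principal part $\na\cdot\bigl(m(\bar n_++\e)^{m-1}\na n\bigr)$ — the coefficient built from $\bar n_+=\max\{\bar n,0\}$ so as to be well defined and uniformly elliptic for every $\bar n$ — and right-hand side formed from $\bar n,\bar c,\bar u$ as in \eqref{Pe}; $c$ solves the linear Neumann problem $c_t=\D c-\bar u\cdot\na c-c\,\tfrac1\e\log(1+\e\bar n_+)$ with $c(\cdot,0)=c_0$; and $u=e^{-tA}u_0+\int_0^t e^{-(t-s)A}\mathcal P\bigl[\bar n\na\Phi-(Y_\e\bar u\cdot\na)\bar u\bigr](s)\,ds$, with $\mathcal P$ the Helmholtz projection. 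Using $L^p$ maximal regularity for the first two components (or the variation-of-constants formulas with the smoothing bounds $\|e^{t\D}\|_{L^p\to W^{1,q}}\lesssim t^{-\frac12-\frac32(\frac1p-\frac1q)}$ and $\|A^{\theta}e^{-tA}\|_{L^2\to L^2}\lesssim t^{-\theta}$), the embeddings $W^{1,q}(\om)\hookrightarrow C^{\nu}(\overline\om)$ for $q>3$ and $D(A^{\theta})\hookrightarrow L^\infty(\om)$ for $\theta>\tfrac34$ — which is why these exponents are imposed — and the Lipschitz properties above, one checks that $\mathcal F$ maps $\mathcal S_T$ into itself and is a contraction there once $T=T(\e,R,n_0,c_0,u_0)$ is small enough. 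Banach's theorem gives a unique fixed point $(\nep,\cep,\uep)$ on $[0,T]$, and the momentum equation supplies a pressure $P_\e$; uniqueness in the full solution class follows from a Gronwall estimate on the difference of two solutions.

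On the fixed point the truncations are inactive. Indeed $\cep>0$ by the maximum principle, since its zeroth-order coefficient $\tfrac1\e\log(1+\e\nep)$ is nonnegative, and $\nep\ge0$ by inspecting the $n$-equation at a first zero: there $\na\nep=0$, which kills the chemotactic, transport and logistic contributions and leaves $\partial_t\nep=m\e^{m-1}\D\nep\ge0$ (a Hopf-lemma argument excludes a boundary minimum); hence $\nep>0$ and $\cep>0$ by the strong maximum principle, and $\nep$ solves the genuine first equation of \eqref{Pe}. A bootstrap then upgrades the mild solution to a classical one on $(0,T)$: $c_0\in W^{1,q}$ with $q>3$ yields $\cep\in L^\infty_{\rm loc}([0,T);W^{1,q}(\om))$, hence $\na\cep$ H\"older continuous on compact subsets of $(0,T]$; parabolic Schauder estimates then give $\nep\in C^{2,1}$, so $\nep\na\Phi\in C^{\alpha,\alpha/2}$ and the Stokes/Schauder theory gives $\uep\in C^{2,1}$ and $P_\e\in C^{1,0}$ locally, and a further pass gives $\cep\in C^{2,1}$. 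Finally, letting $\etmax$ be the supremum of the existence times, uniqueness lets one glue solutions, and if $\etmax<\infty$ while $\|\nep(\cdot,t)\|_{L^\infty(\om)}+\|\cep(\cdot,t)\|_{W^{1,q}(\om)}+\|A^{\theta}\uep(\cdot,t)\|_{L^2(\om)}$ stayed bounded as $t\nearrow\etmax$, one could restart the fixed-point argument at a time $t_0$ close to $\etmax$ with an existence time bounded below independently of $t_0$, contradicting maximality — which is the stated alternative.

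The one genuinely non-routine point is the quasilinear diffusion $\D(\nep+\e)^m$: unlike the case $m=1$ it cannot be folded into the heat semigroup, so the $n$-equation must be handled by freezing the coefficient $m(\bar n_++\e)^{m-1}$ (or by invoking a quasilinear solvability theorem), which forces one to carry enough regularity through the iteration to control that coefficient and, crucially, to establish $\nep\ge0$ a posteriori so that the truncation disappears and the ellipticity constant $m\e^{m-1}$ does not degenerate. Once the $n$-equation is under control, the coupling to the Navier--Stokes block — felt only through the forcing $\nep\na\Phi$ and the Yosida-regularized convection $(Y_\e\uep\cdot\na)\uep$, and accommodated by the choice $\theta>\tfrac34$ — is treated exactly as in the $m=1$ theory.
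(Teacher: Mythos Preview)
Your proposal is correct and follows essentially the same route as the paper: a fixed-point argument combined with parabolic regularity theory, the maximum principle for positivity, and the standard continuation argument. The paper merely states that this is done by combining the proofs of \cite[Lemma 2.1]{Tao-Winkler_2011_non} (for the quasilinear diffusion) and \cite[Lemma 2.1]{W-2012} (for the Navier--Stokes coupling), while you have written out the details of precisely that combination.
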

\begin{proof}
This lemma can be shown by a standard fixed point theorem with 
parabolic regularity arguments. More precisely, 
a combination of the proofs of \cite[Lemma 2.1]{Tao-Winkler_2011_non} 
and \cite[Lemma 2.1]{W-2012} enables us to obtain local existence in \eqref{Pe}. 
\end{proof}
%
%==========================Lem.2.2.==============================% 

In the following for all $\e\in (0,1)$ 
we denote by $(\nep,\cep,\uep)$ the corresponding solution of \eqref{Pe} given by 
Lemma \ref{localsol} and by $\tmax$ its maximal existence time. 
Then we shall see global existence of solutions to the approximate problem \eqref{Pe} 
and their useful estimates. 
We first recall basic inequalities which are often used in studies of the chemotaxis-Navier--Stokes system. 

\begin{lem}\label{pote1}
There exists a constant $C_{1} > 0$ such that 
     \[
     \int_{\Omega} n_{\ep}(\cdot, t) \leq C_1 
\quad \mbox{for all} \ t \in (0, T_{{\rm max}, \ep}) \ \mbox{and all} \  \ep >0.  
     \]
Moreover, there exists $C_{2}>0$ satisfying  
\[
  \int_t^{t+\tau} \int_\Omega n_{\ep}^2 \le C_{2} 
\]
holds for all $t\in (0,T_{{\rm max}, \ep}-\tau)$ 
and all $\ep>0$, 
where 
$\tau \in (0, T_{{\rm max}, \ep})$. 
\end{lem}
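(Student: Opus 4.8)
The plan is to test the first equation of \eqref{Pe} with the constant function $1$, i.e.\ to integrate it over $\om$, and to exploit the dissipative character of the logistic term. First I would note that the ``conservative'' contributions all drop out: by the divergence theorem and the boundary condition $\pa_\nu\nep=0$ the diffusion term gives $\io\D(\nep+\e)^m=\int_{\pa\om}m(\nep+\e)^{m-1}\pa_\nu\nep=0$; since $\pa_\nu\cep=0$ on $\pa\om$ the chemotactic term gives $-\chi\io\na\cdot\big(\frac{\nep}{1+\e\nep}\na\cep\big)=-\chi\int_{\pa\om}\frac{\nep}{1+\e\nep}\pa_\nu\cep=0$; and because $\na\cdot\uep=0$ in $\om$ and $\uep=0$ on $\pa\om$ the convective term gives $\io\uep\cdot\na\nep=\io\na\cdot(\nep\uep)=\int_{\pa\om}\nep(\uep\cdot\nu)=0$. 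Writing $y(t):=\io\nep(\cdot,t)$, one is thus left with the exact identity
\[
  y'(t)=\kappa y(t)-\mu\io\nep^2(\cdot,t),\qquad t\in(0,\tmax).
\]

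For the $L^1$-bound I would combine this with the Cauchy--Schwarz inequality $y(t)^2\le|\om|\io\nep^2(\cdot,t)$, which shows that $y$ is a subsolution of the logistic ODE $z'=\kappa z-\tfrac{\mu}{|\om|}z^2$. Since $\nep>0$ by Lemma~\ref{localsol} we have $y\ge0$, and an elementary comparison argument yields
\[
  y(t)\le\max\Big\{\lp{1}{n_0},\,\tfrac{\kappa|\om|}{\mu}\Big\}=:C_1\qquad\mbox{for all }t\in(0,\tmax),
\]
the constant being independent of $\e$ because the initial datum $n_0$ in \eqref{Pe} does not depend on $\e$ and $\kappa,\mu$ are fixed. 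This proves the first assertion.

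For the space-time $L^2$-bound I would rewrite the identity above as $\mu\io\nep^2=\kappa y(t)-y'(t)$ and integrate over $[t,t+\tau]\subset(0,\tmax)$; using $0\le y\le C_1$ and $y(t+\tau)\ge0$ this gives
\[
  \mu\int_t^{t+\tau}\!\io\nep^2=\kappa\int_t^{t+\tau}\!y(s)\,ds+y(t)-y(t+\tau)\le(\kappa\tau+1)C_1,
\]
so the second assertion holds with $C_2:=\tfrac{(\kappa\tau+1)C_1}{\mu}$, which is uniform in $\e$ and, once $\tau$ is restricted to a bounded range such as $(0,1]$, uniform in $\tau$ as well.

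I do not expect any genuine obstacle here: this is a routine preliminary estimate, and the only points requiring a little care are checking that the boundary and convective integrals genuinely vanish under the stated boundary conditions and, more importantly, making sure that $C_1$ and $C_2$ carry no hidden $\e$-dependence — which is immediate since the surviving terms involve only the fixed parameters $\kappa,\mu$ and the $\e$-independent datum $n_0$.
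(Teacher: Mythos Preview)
Your argument is correct and is exactly the approach the paper has in mind: its proof of this lemma is the one-line remark ``Integrating the first equation in \eqref{Pe} shows this lemma,'' and you have simply spelled out the details of that integration, the resulting ODE comparison, and the time-integration yielding the $L^2$-bound. Nothing is missing; your care about the vanishing of the boundary/convective integrals and the $\e$-independence of $C_1,C_2$ is precisely what the paper is tacitly assuming.
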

\begin{proof}
Integrating the first equation in \eqref{Pe} shows this lemma. 
\end{proof}
%==========================Lem.2.3.==============================% 
 \begin{lem}\label{lem;Linf;c}
The function $t \mapsto \lp{\infty}{c_{\ep}(\cdot, t)}$ is nonincreasing. 
In particular, 
    \begin{equation*}%\label{Linfesti;c}
    \|c_{\ep}(\cdot, t)\|_{L^{\infty}(\Omega)} \le \|c_0\|_{L^{\infty}(\Omega)} 
    \end{equation*}
holds for all $t \in (0, T_{{\rm max}, \ep})$ 
and all $\ep>0$.  
Moreover, we have 
\[
  \int_0^{T_{{\rm max}, \ep}} \int_\Omega 
 |\nabla c_{\ep}|^2 \le \frac 12 \int_\Omega |c_0|^2 
 \quad \mbox{for all}\ \ep>0. 
\]
 \end{lem}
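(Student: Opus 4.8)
The plan is to test the second equation in \eqref{Pe} against suitable powers of $c_\ep$ and to observe that every term so produced has a favourable sign. Throughout, fix $\ep\in(0,1)$ and abbreviate $g_\ep:=\frac1\ep\log(1+\ep n_\ep)$; by Lemma \ref{localsol} we have $n_\ep>0$ and $c_\ep>0$ on $\overline\Omega\times[0,\tmax)$, so $g_\ep\ge0$, and moreover $c_\ep\in C^0(\overline\Omega\times[0,\tmax))\cap C^{2,1}(\overline\Omega\times(0,\tmax))$ together with $u_\ep$ divergence-free and vanishing on $\partial\Omega$, which makes all the integrations by parts below legitimate.

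First I would prove that $t\mapsto\|c_\ep(\cdot,t)\|_{L^p(\Omega)}$ is nonincreasing for every $p\ge2$. Multiplying $c_{\ep t}+u_\ep\cdot\nabla c_\ep=\Delta c_\ep-c_\ep g_\ep$ by $c_\ep^{p-1}$ and integrating over $\Omega$: the convective term vanishes since $\int_\Omega(u_\ep\cdot\nabla c_\ep)c_\ep^{p-1}=\frac1p\int_\Omega u_\ep\cdot\nabla(c_\ep^p)=0$ by $\nabla\cdot u_\ep=0$ and $u_\ep|_{\partial\Omega}=0$; integration by parts with $\partial_\nu c_\ep=0$ makes the diffusion term equal to $-(p-1)\int_\Omega c_\ep^{p-2}|\nabla c_\ep|^2\le0$; and $-\int_\Omega c_\ep^p g_\ep\le0$. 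Hence $\frac{d}{dt}\int_\Omega c_\ep^p\le0$. Letting $p\to\infty$, and using that $\|f\|_{L^p(\Omega)}\to\|f\|_{L^\infty(\Omega)}$ for bounded $f$ on the bounded domain $\Omega$, we deduce that $t\mapsto\|c_\ep(\cdot,t)\|_{L^\infty(\Omega)}$ is nonincreasing on $(0,\tmax)$; combined with the continuity of $c_\ep$ up to $t=0$ this yields $\|c_\ep(\cdot,t)\|_{L^\infty(\Omega)}\le\|c_0\|_{L^\infty(\Omega)}$. (Alternatively, this $L^\infty$ bound is immediate from the parabolic comparison principle, the constant $\|c_0\|_{L^\infty(\Omega)}$ being a supersolution of the linear problem $\partial_t w+u_\ep\cdot\nabla w-\Delta w+g_\ep w=0$ precisely because $g_\ep\ge0$.)

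For the gradient estimate I would run the same computation with $p=2$ but retain the dissipation term: testing against $c_\ep$ gives $\tfrac12\frac{d}{dt}\int_\Omega c_\ep^2+\int_\Omega|\nabla c_\ep|^2=-\int_\Omega c_\ep^2 g_\ep\le0$. Integrating over $(0,T)$ for arbitrary $T<\tmax$ and dropping the nonnegative term $\tfrac12\int_\Omega c_\ep^2(\cdot,T)$ yields $\int_0^T\int_\Omega|\nabla c_\ep|^2\le\tfrac12\int_\Omega|c_0|^2$, and letting $T\nearrow\tmax$ by monotone convergence gives the stated inequality. I do not expect a genuine obstacle here; the only mildly delicate point is the routine passage $p\to\infty$, which is standard on the bounded domain $\Omega$.
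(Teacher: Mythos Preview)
Your argument is correct and matches the paper's own proof: for the $L^\infty$ bound the paper simply invokes the parabolic maximum/comparison principle (exactly the alternative you mention), and for the gradient estimate the paper multiplies the $c_\ep$-equation by $c_\ep$ and integrates, which is your $p=2$ computation. Your $L^p$-testing route with $p\to\infty$ is a harmless variant of the same idea and yields the same conclusion.
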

%----------------------------------------------proof
\begin{proof}
Applying the maximum principle 
to the second equation in \eqref{Pe} (see e.g., \cite[Lemma 2.1]{W2014}),  
we can establish 
the $L^\infty$-estimate for $\cep$.  
Moreover, multiplying the third equation in \eqref{Pe} by $\cep$ 
and integrating it over $\Omega \times (0,T_{{\rm max}, \ep})$ imply 
this lemma.  
\end{proof}

We next consider an estimate for 
the energy function $\mathcal{F}_\ep:(0,\tmax)\to \mathbb{R}$ 
defined as 
\[
  \mathcal{F}_\e(t) := \io \nep(\cdot,t) \log \nep(\cdot,t) 
  + \frac \chi 2 \io \frac{|\nabla \cep(\cdot,t)|^2}{\cep(\cdot,t)} 
  + K\chi \io |\uep(\cdot,t)|^2 
\]
with some $K>0$, 
which plays an important role not only in the case that 
$m=1$ (\cite{Lankeit_2016}) 
but also in the case that $m>0$. 
In order to see an estimate for $\mathcal{F}_\e$ we provide the following 3 lemmas. 
\begin{lem}\label{ne}
There exists a constant $C>0$ such that for all $\ep>0$, 
\begin{align*}
\frac{d}{dt}\int_{\Omega} n_{\ep}\log n_{\ep} 
+ \frac{\mu}{2}\int_{\Omega} n_{\ep}^2\log n_{\ep} 
+ \frac{4m}{(m+1)^2}\int_{\Omega} 
\frac{|\nabla (n_{\ep} + \e)^{\frac{m+1}{2}}|^2}{n_{\ep}}
\leq \chi\int_{\Omega}\frac{\nabla n_{\ep} \cdot \nabla c_{\ep}}{1+\ep n_{\ep}} 
     + C 
\end{align*}
on $(0, T_{{\rm max}, \ep})$.
\end{lem}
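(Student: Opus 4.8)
The plan is to differentiate $t\mapsto\io\nep\log\nep$ along \eqref{Pe}, i.e.\ to test the first equation of \eqref{Pe} by $\log\nep+1$. This is legitimate because Lemma~\ref{localsol} guarantees that $\nep$ is positive and of class $C^{2,1}$ on $\overline{\om}\times(0,\tmax)$, so $\io\nep\log\nep$ is well defined and differentiable. Substituting the equation gives
\[
\frac{d}{dt}\io\nep\log\nep
=\io(\log\nep+1)\Big[\D(\nep+\e)^m-\chi\na\cdot\Big(\tfrac{\nep}{1+\e\nep}\na\cep\Big)+\kappa\nep-\mu\nep^2-\uep\cdot\na\nep\Big],
\]
and I would treat the four groups of terms on the right separately.

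For the diffusion term I would integrate by parts, using $\pa_\nu\nep=0$ (hence $\pa_\nu(\nep+\e)^m=0$) together with $\na(\nep+\e)^{\frac{m+1}{2}}=\frac{m+1}{2}(\nep+\e)^{\frac{m-1}{2}}\na\nep$, to obtain
\[
\io(\log\nep+1)\D(\nep+\e)^m
=-m\io\frac{(\nep+\e)^{m-1}|\na\nep|^2}{\nep}
=-\frac{4m}{(m+1)^2}\io\frac{|\na(\nep+\e)^{\frac{m+1}{2}}|^2}{\nep},
\]
which is exactly the dissipation term on the left-hand side of the claim (note that the $+\e$ regularization keeps $\nep+\e$ bounded away from $0$, so this holds for every $m>0$). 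Integrating the chemotaxis term by parts, now with $\pa_\nu\cep=0$, and using $\na\log\nep=\na\nep/\nep$ produces precisely $\chi\io\frac{\na\nep\cdot\na\cep}{1+\e\nep}$, the term appearing on the right-hand side. The convective term vanishes: since $(\log\nep+1)\na\nep=\na(\nep\log\nep)$ and $\na\cdot\uep=0$ with $\uep|_{\pa\om}=0$, one has $\io\uep\cdot\na(\nep\log\nep)=-\io(\na\cdot\uep)\nep\log\nep=0$.

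What remains is the logistic contribution $\kappa\io\nep\log\nep+\kappa\io\nep-\mu\io\nep^2\log\nep-\mu\io\nep^2$, and this is the only place where genuine care is needed. I would discard $-\mu\io\nep^2\le0$, bound $\kappa\io\nep\le\kappa C_1$ via the mass estimate of Lemma~\ref{pote1}, and keep $\frac{\mu}{2}\io\nep^2\log\nep$ on the left by invoking the elementary pointwise inequality $\kappa s\log s-\frac{\mu}{2}s^2\log s\le C$ valid for all $s>0$ — for $0<s\le1$ because $s\log s\le0$ and $\sup_{0<s\le1}s^2|\log s|<\infty$, and for $s>1$ because $s\log s\,(\kappa-\frac{\mu}{2}s)$ is bounded above. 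Collecting the diffusion, chemotaxis, convective and logistic contributions and rearranging then yields the asserted inequality with a constant $C>0$ independent of $\e$. Thus the main (and rather mild) obstacle is the sign bookkeeping for the logistic terms near $\nep=0$ and for large $\nep$; the remainder is a routine energy-type computation generalizing that of \cite{Lankeit_2016}.
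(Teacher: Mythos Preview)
Your proposal is correct and follows essentially the same route as the paper: test the first equation by $\log\nep+1$, identify the diffusion term via $\io\log\nep\,\Delta(\nep+\e)^m=-\frac{4m}{(m+1)^2}\io\frac{|\na(\nep+\e)^{(m+1)/2}|^2}{\nep}$, integrate the chemotaxis term by parts, kill the convective term via $\na\cdot\uep=0$, and absorb the logistic remainder by a pointwise boundedness argument. The only cosmetic difference is that the paper bundles all four logistic pieces into the single bounded function $s\mapsto(\kappa s-\tfrac{\mu}{2}s^2)\log s+\kappa s-\mu s^2$, whereas you peel off $-\mu\io\nep^2\le0$ and $\kappa\io\nep\le\kappa C_1$ first and then bound $\kappa s\log s-\tfrac{\mu}{2}s^2\log s$; either way works.
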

\begin{proof}
We first obtain from $\nabla \cdot \uep=0$ in $\Omega \times (0,\tmax)$ and  straightforward calculations that 
\begin{align}\label{ineq;energy;n}
\frac d{dt} \io \nep \log \nep 
& = \io \log \nep \Delta (\nep+\e)^m 
- \chi \io \log \nep \nabla \cdot \left( \frac{\nep}{1+\ep\nep}\nabla \cep \right) 
\\ \notag 
&\quad\,  
+ \kappa \io \nep \log \nep -\mu \io \nep^2 \log \nep + \kappa \io \nep -\mu\io \nep^2. 
\end{align}
Then, noting from the boundedness of the functions 
$s\mapsto (\kappa s-\frac \mu 2 s^2)\log s + \kappa s-\mu s^2$ 
on $(0,\infty)$ that 
\begin{align*}
\kappa \io \nep \log \nep -\frac \mu 2 \io \nep^2 \log \nep + \kappa \io \nep -\mu\io \nep^2 
\le C_1 
\end{align*}
with some $C_1>0$,  
we can see from \eqref{ineq;energy;n} with the relation 
\begin{align*}
\int_{\Omega} \log \nep \Delta (\nep+\e)^m 
&= -m\int_{\Omega} (n_{\ep}+\e)^{m-1}\frac{|\nabla n_{\ep}|^2}{n_{\ep}}\\
%&= -m\int_{\Omega} 
%\frac{|(n_{\ep}+\e)^{\frac{m-1}{2}}\nabla n_{\ep}|^2}{n_{\ep}}\\
&= -\frac{4m}{(m+1)^2}\int_{\Omega} 
\frac{|\nabla (n_{\ep}+\e)^{\frac{m+1}{2}}|^2}{n_{\ep}} 
\end{align*}
that this lemma holds. 
\end{proof}

The following 2 lemmas have already been proved 
in the proofs of \cite[Lemmas 2.8 and 2.9]{Lankeit_2016}. 
Thus we only recall statements of lemmas.   
%==========================Lem.2.4.==============================%
\begin{lem}\label{ushi}
There exist $K, C, k>0$ such that for all $\ep>0$,  
\begin{align*}
\frac{d}{dt}\int_{\Omega}\frac{|\nabla c_{\ep}|^2}{c_{\ep}} 
&+ k\int_{\Omega}c_{\ep}|D^2\log c_{\ep}|^2 
+ k\int_{\Omega}\frac{|\nabla c_{\ep}|^4}{c_{\ep}^3} \\
&\leq C + K\int_{\Omega}|\nabla u_{\ep}|^2 
       -2\int_{\Omega}\frac{\nabla c_{\ep}\cdot\nabla n_{\ep}}{1+\ep n_{\ep}} 
\quad \mbox{on}\ (0, T_{{\rm max}, \ep}).
\end{align*}
\end{lem}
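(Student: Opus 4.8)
The plan is to reproduce, for the regularized system \eqref{Pe}, the ``second energy'' computation of Winkler for the chemotaxis--Stokes system, in the form carried out by \cite{Lankeit_2016}. Fix $\ep\in(0,1)$, abbreviate $c:=c_\ep$, $u:=u_\ep$ and set $f_\ep:=\frac1\ep\log(1+\ep n_\ep)$, so that the second equation in \eqref{Pe} reads $c_t=\Delta c-u\cdot\nabla c-c\,f_\ep$ with $f_\ep\ge 0$ and $\nabla f_\ep=\frac{\nabla n_\ep}{1+\ep n_\ep}$. Since $c>0$ on $\overline\Omega\times(0,\tmax)$ by Lemma \ref{localsol}, I would differentiate the functional via the pointwise identity
\[
  \partial_t\frac{|\nabla c|^2}{c}=\frac{2\nabla c\cdot\nabla c_t}{c}-\frac{|\nabla c|^2}{c^2}\,c_t,
\]
substitute the equation, and integrate over $\Omega$, splitting the outcome into a \emph{diffusion part} (from $\Delta c$), a \emph{transport part} (from $-u\cdot\nabla c$) and a \emph{consumption part} (from $-c\,f_\ep$).

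The consumption part is the source of the right-hand term in the statement: it equals $-2\io\nabla c\cdot\nabla f_\ep-\io\frac{|\nabla c|^2}{c}f_\ep=-2\io\frac{\nabla c_\ep\cdot\nabla n_\ep}{1+\ep n_\ep}-\io\frac{|\nabla c_\ep|^2}{c_\ep}f_\ep$, and since $f_\ep\ge0$ the last integral is nonpositive and may be discarded. For the diffusion part I would integrate by parts twice, use the commutation $\nabla\Delta c=\Delta\nabla c$, the boundary condition $\partial_\nu c=0$, and the pointwise identity
\[
  c\,|D^2\log c|^2=\frac{|D^2c|^2}{c}-\frac{\nabla c\cdot\nabla|\nabla c|^2}{c^2}+\frac{|\nabla c|^4}{c^3},
\]
to rewrite it as $-2\io c|D^2\log c|^2$ plus a boundary integral bounded by $\int_{\partial\Omega}\frac1c\,\partial_\nu|\nabla c|^2$. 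On $\partial\Omega$ the condition $\partial_\nu c=0$ renders $\nabla c$ tangential, whence $\partial_\nu|\nabla c|^2\le C_{\partial\Omega}|\nabla c|^2$ with $C_{\partial\Omega}$ an upper bound for the principal curvatures of $\partial\Omega$; a trace/Gagliardo--Nirenberg interpolation then absorbs $\int_{\partial\Omega}\frac{|\nabla c|^2}{c}$ into an arbitrarily small multiple of $\io c|D^2\log c|^2+\io\frac{|\nabla c|^4}{c^3}$ plus a constant depending only on $\Omega$ and on $\|c_0\|_{L^\infty(\Omega)}$ (available through Lemma \ref{lem;Linf;c}). This yields the two negative dissipation terms $-k\io c|D^2\log c|^2-k\io\frac{|\nabla c|^4}{c^3}$ on the left.

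For the transport part one expands $\nabla(u\cdot\nabla c)=(D^2c)\,u+(\nabla u)^{T}\nabla c$, so that, after using $\nabla\cdot u=0$, it reduces to integrals of the type $\io\frac{|\nabla u|\,|\nabla c|^2}{c}$, $\io\frac{|u|\,|\nabla c|\,|D^2c|}{c}$ and $\io\frac{|u|\,|\nabla c|^3}{c^2}$. By Young's inequality, the bound $\|c_\ep(\cdot,t)\|_{L^\infty(\Omega)}\le\|c_0\|_{L^\infty(\Omega)}$, the elementary consequence $\frac{|D^2c|^2}{c}\le 2c|D^2\log c|^2+2\frac{|\nabla c|^4}{c^3}$ of the identity above, and a Gagliardo--Nirenberg inequality in $\mathbb R^3$, each of these integrals is dominated by $K\io|\nabla u_\ep|^2$ plus small multiples of the two dissipation terms plus a constant. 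Collecting everything and fixing an $\ep$-independent $k>0$ small enough gives the claim; the bookkeeping coincides with \cite[Lemmas 2.8 and 2.9]{Lankeit_2016}, the only difference being that the regularizations $\frac{n_\ep}{1+\ep n_\ep}$ and $\frac1\ep\log(1+\ep n_\ep)$ are retained, and these are designed precisely so that the $n$-gradient contribution emerges in the convenient form $-2\io\frac{\nabla c_\ep\cdot\nabla n_\ep}{1+\ep n_\ep}$ (which later cancels the cross term produced in Lemma \ref{ne}).

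The delicate points, and the places where I expect the real work to lie, are two: (i) the boundary integral in the diffusion part, which need not have a favourable sign on a non-convex domain and must be controlled by the curvature estimate together with the trace interpolation; and (ii) the transport integrals, whose absorption relies on having \emph{both} dissipation rates $\io c|D^2\log c|^2$ and $\io\frac{|\nabla c|^4}{c^3}$ at one's disposal simultaneously, in combination with the uniform $L^\infty$-bound on $c_\ep$. Note that no hypothesis on $m$ enters here: this lemma concerns only the $(c,u)$-block, which is why it transfers verbatim from the case $m=1$ to all $m>0$.
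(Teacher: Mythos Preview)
Your overall strategy is correct and coincides with the paper's treatment: the paper does not give an independent proof of this lemma but simply refers to \cite[Lemma 2.8]{Lankeit_2016}, and your outline reproduces that computation, correctly identifying the consumption term as the source of $-2\io\frac{\nabla c_\ep\cdot\nabla n_\ep}{1+\ep n_\ep}$ and correctly locating the delicate boundary estimate in the diffusion part.

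There is, however, one inaccuracy in your sketch of the transport part that would cause a real problem if carried out as written. After expanding $\nabla(u\cdot\nabla c)=(\nabla u)^{T}\nabla c+(D^2c)\,u$ and inserting into the functional, the two contributions involving $u$ (rather than $\nabla u$) combine exactly into
\[
  -\int_\Omega\frac{u\cdot\nabla|\nabla c|^2}{c}+\int_\Omega\frac{|\nabla c|^2\,u\cdot\nabla c}{c^2}
  =-\int_\Omega u\cdot\nabla\Bigl(\frac{|\nabla c|^2}{c}\Bigr)=0,
\]
using $\nabla\cdot u=0$ together with $u|_{\partial\Omega}=0$. Hence the only surviving transport term is $-2\io c^{-1}(\nabla c)^{T}(\nabla u)\nabla c$, which is bounded by $2\io c^{-1}|\nabla u|\,|\nabla c|^2$ and then by $\eta\io c^{-3}|\nabla c|^4+C_\eta\|c_0\|_{L^\infty}\io|\nabla u|^2$ via Young. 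Your listed integrals $\io c^{-1}|u|\,|\nabla c|\,|D^2c|$ and $\io c^{-2}|u|\,|\nabla c|^3$ do \emph{not} need to be estimated separately; in fact, if you try to absorb them directly by Young and Sobolev as you propose, you end up with $\|\nabla u_\ep\|_{L^2}^4$ rather than $\|\nabla u_\ep\|_{L^2}^2$, which is incompatible with the stated inequality. So the cancellation above is not a cosmetic simplification but the mechanism that makes the lemma close with a linear dependence on $\io|\nabla u_\ep|^2$.
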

%\begin{proof}
%This lemma can be proved in the same way to \cite[Lemma 2.8]{Lankeit_2016}.
%\end{proof}
%==========================Lem.2.5.==============================%
\begin{lem}\label{tra}
For all $\eta>0$ there exists $C_{\eta} > 0$ such that 
for all $\ep>0$,  
\begin{align*}
\frac{d}{dt}\int_{\Omega}|u_{\ep}|^2 + \int_{\Omega} |\nabla u_{\ep}|^2 
\leq \eta \int_{\Omega}n_{\ep}^2\log n_{\ep} + C_{\eta} 
\quad \mbox{on}\ (0, T_{{\rm max}, \ep}).
\end{align*} 
\end{lem}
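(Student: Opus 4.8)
The plan is to obtain the inequality by the classical $L^2$ energy estimate for the (approximate) Navier--Stokes equation: I would test the third equation in \eqref{Pe} against $\uep$ and then absorb the buoyancy forcing $\nep\na\Phi$. Concretely, multiplying $u_{\e t}+(Y_\e\uep\cdot\na)\uep=\D\uep+\na P_\e+\nep\na\Phi$ by $\uep$ and integrating over $\Omega$, I use that $Y_\e=(1+\e A)^{-1}$ maps $L^2_\sigma(\Omega)$ into $D(A)$, so that $Y_\e\uep$ is divergence free and vanishes on $\partial\Omega$; hence
\[
  \io\big((Y_\e\uep\cdot\na)\uep\big)\cdot\uep=\tfrac12\io Y_\e\uep\cdot\na|\uep|^2=0 .
\]
Together with $\io\na P_\e\cdot\uep=0$ (from $\na\cdot\uep=0$, $\uep|_{\partial\Omega}=0$) and $\io\D\uep\cdot\uep=-\io|\na\uep|^2$, this yields the identity $\tfrac12\frac{d}{dt}\io|\uep|^2+\io|\na\uep|^2=\io\nep\,\na\Phi\cdot\uep$ on $(0,\tmax)$.

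Next I would estimate the right-hand side. Since $\Phi\in C^{1+\beta}(\overline\Omega)$, we have $\na\Phi\in L^\infty(\Omega)$, so Hölder's inequality with exponents $\tfrac65$ and $6$ together with the Sobolev embedding $W^{1,2}_{0}(\Omega)\hookrightarrow L^6(\Omega)$ (valid since $\Omega\subset\mathbb{R}^3$) gives
\[
  2\io\nep\,\na\Phi\cdot\uep\le\io|\na\uep|^2+C\,\|\nep\|_{L^{6/5}(\Omega)}^2 .
\]
Multiplying the energy identity by $2$ and inserting this bound leaves $\frac{d}{dt}\io|\uep|^2+\io|\na\uep|^2\le C\,\|\nep\|_{L^{6/5}(\Omega)}^2$, so the whole statement reduces to showing that for every $\eta>0$ there is $C_\eta>0$ with $\|\nep\|_{L^{6/5}(\Omega)}^2\le\eta\io\nep^2\log\nep+C_\eta$.

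For this last reduction I would combine the uniform mass bound $\|\nep(\cdot,t)\|_{L^1(\Omega)}\le C_1$ from Lemma \ref{pote1} with the interpolation inequality $\|\nep\|_{L^{6/5}}\le\|\nep\|_{L^1}^{2/3}\|\nep\|_{L^2}^{1/3}$, which yields $\|\nep\|_{L^{6/5}}^2\le C\|\nep\|_{L^2}^{2/3}$. Splitting $\Omega$ into $\{\nep\le e\}$ and $\{\nep>e\}$ and using that $\nep^2\le\nep^2\log\nep$ on the latter set, while $s\mapsto s^2\log s$ is bounded below on $(0,\infty)$, one gets $\io\nep^2\le\io\nep^2\log\nep+C$. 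Hence $\|\nep\|_{L^{6/5}}^2\le C\big(\io\nep^2\log\nep+C\big)^{1/3}$, and since the quantity in parentheses is nonnegative, a final application of Young's inequality turns the exponent $\tfrac13$ into an arbitrarily small multiple of $\io\nep^2\log\nep$ plus a constant $C_\eta$, which is exactly the asserted estimate.

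The argument is essentially routine energy bookkeeping, and this is also why the statement is quoted from \cite{Lankeit_2016}; the only step requiring slight care is the last one, namely recognizing that the logarithmically weighted quantity $\io\nep^2\log\nep$ appearing in the dissipation of Lemma \ref{ne} is strong enough to dominate, after interpolation against the conserved mass, the sub‑quadratic norm $\|\nep\|_{L^{6/5}}^2$ with an arbitrarily small coefficient — which is what makes the coupling of the three differential inequalities of Lemmas \ref{ne}, \ref{ushi} and \ref{tra} into a usable bound for $\mathcal F_\e$ possible.
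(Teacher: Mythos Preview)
Your argument is correct and follows exactly the standard route that the paper defers to \cite[Lemma 2.9]{Lankeit_2016}: test the fluid equation by $\uep$, use that $Y_\e\uep$ is solenoidal with $Y_\e\uep|_{\partial\Omega}=0$ to kill the convection and pressure terms, absorb the buoyancy term via $L^{6/5}$--$L^6$ H\"older plus the Sobolev--Poincar\'e inequality, and then dominate $\|n_\ep\|_{L^{6/5}}^2$ by the mass bound of Lemma \ref{pote1} interpolated against $\int_\Omega n_\ep^2\log n_\ep$. The paper itself provides no independent proof, so there is nothing further to compare.
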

%\begin{proof}
%We can prove this lemma in the same way to \cite[Lemma 2.9]{Lankeit_2016}.
%\end{proof}

Thanks to these lemmas, we can establish 
the estimate for $\frac{d \mathcal{F}_\e}{dt}$ 
which enables us to derive the desired estimate for $\mathcal{F}_\e$. 

%%==============================================================%%
%%==============                                  ==============%%
%%======                      Section3                    ======%%
%%====                                                      ====%%
%%==                                                          ==%%
%%====　　　　　   Existence of solutions to (P) 　　 　    ====%%
%%======                                                  ======%%
%%==============                                  ==============%%
%%==============================================================%%

%==========================Lem.3.2.==============================%
\begin{lem}\label{tastu}
There exist $C, k_{0}, K >0$ satisfying 
\begin{align}\label{ineq;Fe}
&\frac{d}{dt}\Bigl[\int_{\Omega} n_{\ep}\log n_{\ep} 
+ \frac{\chi}{2}\int_{\Omega}\frac{|\nabla c_{\ep}|^2}{c_{\ep}} 
+ K\chi\int_{\Omega}|u_{\ep}|^2
\Bigr] \\ \notag
&+ \frac{\mu}{4}\int_{\Omega} n_{\ep}^2\log n_{\ep} 
+ \frac{4m}{(m+1)^2}\int_{\Omega}
\frac{|\nabla (n_{\ep}+\e)^{\frac{m+1}{2}}|^2}{n_{\ep}} 
+ k_{0}\int_{\Omega} c_{\ep}|D^2\log c_{\ep}|^2 \\ \notag 
&+ k_{0}\int_{\Omega}\frac{|\nabla c_{\ep}|^4}{c_{\ep}^3} 
+ k_{0}\int_{\Omega}|\nabla u_{\ep}|^2
\leq C \quad \mbox{on}\ (0, T_{{\rm max}, \ep})\quad \mbox{for all}\ \ep>0.
\end{align}
\end{lem}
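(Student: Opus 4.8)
The plan is to combine the three preceding lemmas, namely Lemma \ref{ne}, Lemma \ref{ushi} and Lemma \ref{tra}, in a linear combination chosen so that the troublesome cross terms cancel and the bad sign on $\int_\Omega n_\ep^2\log n_\ep$ produced by Lemma \ref{tra} is absorbed by the good term $\frac{\mu}{2}\int_\Omega n_\ep^2\log n_\ep$ coming from Lemma \ref{ne}. Concretely, I would first multiply the inequality of Lemma \ref{ushi} by $\frac{\chi}{2}$; this produces, on the right-hand side, the term $-\chi\int_\Omega \frac{\nabla c_\ep\cdot\nabla n_\ep}{1+\ep n_\ep}$, which is exactly the negative of the cross term $\chi\int_\Omega\frac{\nabla n_\ep\cdot\nabla c_\ep}{1+\ep n_\ep}$ appearing on the right-hand side of Lemma \ref{ne}. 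Adding these two differential inequalities therefore eliminates the chemotactic cross term entirely, at the cost of retaining $\frac{K\chi}{2}\int_\Omega|\nabla u_\ep|^2$ on the right (with the $K$ from Lemma \ref{ushi}) and keeping the good dissipation terms $\frac{2k}{\chi}\cdot\frac{\chi}{2}(\dots)$, i.e.\ $k_1\int_\Omega c_\ep|D^2\log c_\ep|^2 + k_1\int_\Omega\frac{|\nabla c_\ep|^4}{c_\ep^3}$ for some $k_1>0$, together with $\frac{\mu}{2}\int_\Omega n_\ep^2\log n_\ep$ and $\frac{4m}{(m+1)^2}\int_\Omega\frac{|\nabla(n_\ep+\e)^{\frac{m+1}{2}}|^2}{n_\ep}$.

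Next I would add $K\chi$ times the inequality of Lemma \ref{tra}, where $K$ is the same constant as above (so that the $\int_\Omega|\nabla u_\ep|^2$ contributions balance): Lemma \ref{tra} gives $K\chi\frac{d}{dt}\int_\Omega|u_\ep|^2 + K\chi\int_\Omega|\nabla u_\ep|^2 \le K\chi\eta\int_\Omega n_\ep^2\log n_\ep + K\chi C_\eta$. Adding this, the $\frac{K\chi}{2}\int_\Omega|\nabla u_\ep|^2$ from the previous step is dominated by the $K\chi\int_\Omega|\nabla u_\ep|^2$ on the left, leaving a net $\frac{K\chi}{2}\int_\Omega|\nabla u_\ep|^2$ on the left-hand side, which I rename $k_0\int_\Omega|\nabla u_\ep|^2$. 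It remains to choose $\eta>0$ small: picking $\eta$ with $K\chi\eta \le \frac{\mu}{4}$ ensures that $\frac{\mu}{2}\int_\Omega n_\ep^2\log n_\ep - K\chi\eta\int_\Omega n_\ep^2\log n_\ep \ge \frac{\mu}{4}\int_\Omega n_\ep^2\log n_\ep$, which is exactly the coefficient claimed in \eqref{ineq;Fe}. All remaining right-hand side contributions are bounded constants (the $C$ from Lemma \ref{ne}, the $\frac{\chi}{2}C$ from Lemma \ref{ushi}, and $K\chi C_\eta$ from Lemma \ref{tra}), so they collect into a single constant $C>0$. Finally I set $k_0 := \min\{k_1, \frac{K\chi}{2}\}$ (and keep the $c_\ep|D^2\log c_\ep|^2$ coefficient $\ge k_0$ as well) to obtain the stated inequality with uniform constants for all $\ep>0$.

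One subtlety to address carefully: if $\chi=0$ the fluid term $K\chi\int_\Omega|u_\ep|^2$ degenerates and no coupling with $u_\ep$ is needed, but then the cross term in Lemma \ref{ne} also vanishes, so the argument trivializes; one should simply note that in that case Lemma \ref{ne} already gives \eqref{ineq;Fe} directly (with the $\int_\Omega|\nabla u_\ep|^2$ term supplied separately, if desired, by Lemma \ref{tra} alone). The constants $K,k$ furnished by Lemma \ref{ushi} are fixed once and for all, and since $\eta>0$ in Lemma \ref{tra} is arbitrary, the choice $\eta=\frac{\mu}{4K\chi}$ (when $\chi>0$) is legitimate and produces a corresponding fixed $C_\eta$. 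I expect the main obstacle to be purely bookkeeping rather than conceptual: keeping track of exactly which multiple of each lemma to use so that the cross terms cancel with the right sign and so that every dissipation term on the left survives with a strictly positive coefficient uniform in $\ep$. Once the linear combination is written down, the inequality \eqref{ineq;Fe} follows by inspection.
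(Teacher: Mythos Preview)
Your proposal is correct and follows essentially the same approach as the paper: the paper's proof simply says the lemma ``can be derived by a combination of Lemmas \ref{ne}, \ref{ushi} and \ref{tra} with $\eta:=\frac{\mu}{4K\chi}$,'' which is exactly the linear combination (Lemma \ref{ne} $+$ $\frac{\chi}{2}\times$Lemma \ref{ushi} $+$ $K\chi\times$Lemma \ref{tra}) and the choice of $\eta$ you describe. Your treatment of the degenerate case $\chi=0$ is a sensible addition not made explicit in the paper.
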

\begin{proof}
This lemma can be 
derived by a combination of 
Lemmas \ref{ne}, \ref{ushi} and \ref{tra} 
with $\eta:=\frac{\mu}{4K\chi}$. 
\end{proof}

Now we are in a position to see the estimate for $\mathcal{F}_\e$ uniformly-in-$\e$. 
%==========================Lem.3.3.==============================%
\begin{lem}\label{me}
There exists $C>0$ such that 
\begin{align*}
  \mathcal{F}_\e(t) = \io \nep(\cdot,t) \log \nep(\cdot,t) 
  + \frac \chi 2 \io \frac{|\nabla \cep(\cdot,t)|^2}{\cep(\cdot,t)} 
  + K\chi \io |\uep(\cdot,t)|^2 
%\int_{\Omega} n_{\ep}\log n_{\ep} + \int_{\Omega}\frac{|\nabla c_{\ep}|^2}{c_{\ep}} 
%+ \int_{\Omega}|u_{\ep}|^2 
\leq C %\quad \mbox{on}\ (0, T_{{\rm max}, \ep}) 
\end{align*} 
for all  $t\in (0,\tmax)$ and all $\ep>0$ 
and
\begin{align*}
&\int_{t}^{t+\tau}\int_{\Omega} n_{\ep}^2\log n_{\ep} 
+ \int_{t}^{t+\tau}\int_{\Omega}
\frac{|\nabla (n_{\ep}+\e)^{\frac{m+1}{2}}|^2}{n_{\ep}}
+ \int_{t}^{t+\tau}\int_{\Omega} c_{\ep}|D^2\log c_{\ep}|^2 
\leq C, 
\\[1mm]
&\int_{t}^{t+\tau}\int_{\Omega}\frac{|\nabla c_{\ep}|^4}{c_{\ep}^3} 
+ \int_{t}^{t+\tau}\int_{\Omega}|\nabla u_{\ep}|^2
\leq C, 
\\[1mm]
&\int_{t}^{t+\tau}\int_{\Omega}|\nabla (n_{\ep}+\e)^{\frac{m+1}{2}}|^{\frac{4}{3}} 
+ \int_{\Omega} |\nabla c_{\ep}|^2  
+ \int_{t}^{t+\tau}\int_{\Omega}|\nabla c_{\ep}|^4 
+ \int_{t}^{t+\tau}\int_{\Omega} n_{\ep}^2
\leq C 
\end{align*}
for all $t \in [0, T_{{\rm max}, \ep}-\tau)$ and all $\ep>0$, 
where $\tau:=\min\{1, \frac{1}{2}T_{{\rm max}, \ep}\}$. 
\end{lem}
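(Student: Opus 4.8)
The plan is to convert the differential inequality \eqref{ineq;Fe} of Lemma \ref{tastu} into a \emph{self-improving} one of the form $\frac{d}{dt}\mathcal{F}_\ep+\delta\mathcal{F}_\ep\le C$ on $(0,\tmax)$ with some $\delta>0$, and then to conclude via an integrating-factor argument. This step is genuinely needed and not merely cosmetic: since $\tmax$ is still unknown at this point, integrating \eqref{ineq;Fe} as it stands would only give a bound on $\mathcal{F}_\ep(t)$ growing linearly in $t$, whereas the first assertion of Lemma \ref{me} requires a bound uniform in $t\in(0,\tmax)$. (We take $\chi>0$; when $\chi=0$ the first equation decouples from $\cep$ and the conclusion follows from simpler variants of the estimates below.)

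First I would record that $\mathcal{F}_\ep$ is bounded below uniformly in $\ep$ and $t$, because $s\log s\ge-\tfrac1e$ forces $\io\nep\log\nep\ge-\tfrac{|\Omega|}e$ while the other two summands are nonnegative, and that $\mathcal{F}_\ep(0)=\io n_0\log n_0+\frac\chi2\io\frac{|\nabla c_0|^2}{c_0}+K\chi\io|u_0|^2$ is finite and independent of $\ep$ (here $c_0\in W^{1,q}(\Omega)$ with $q>3$ embeds into $C(\overline\Omega)$ and, being positive, is bounded below by a positive constant). Next I would control each summand of $\mathcal{F}_\ep$ by a dissipation term in \eqref{ineq;Fe}: (i) the pointwise inequality $s^2\log s\ge s\log s$ for $s>0$ gives $\frac\mu4\io\nep^2\log\nep\ge\frac\mu4\io\nep\log\nep$; (ii) Cauchy--Schwarz together with $\io\cep\le\|c_0\|_{L^\infty(\Omega)}|\Omega|$ (Lemma \ref{lem;Linf;c}) gives
\[
\io\frac{|\nabla\cep|^2}{\cep}=\io\Bigl(\frac{|\nabla\cep|^4}{\cep^3}\Bigr)^{1/2}\cep^{1/2}\le\Bigl(\io\frac{|\nabla\cep|^4}{\cep^3}\Bigr)^{1/2}\bigl(\|c_0\|_{L^\infty(\Omega)}|\Omega|\bigr)^{1/2},
\]
whence, by Young's inequality, $k_0\io\frac{|\nabla\cep|^4}{\cep^3}\ge\delta\,\frac\chi2\io\frac{|\nabla\cep|^2}{\cep}-C$; (iii) the Poincar\'e inequality (recall $\uep|_{\partial\Omega}=0$) gives $\io|\uep|^2\le C_P\io|\nabla\uep|^2$, so $k_0\io|\nabla\uep|^2\ge\delta K\chi\io|\uep|^2$ as soon as $\delta\le k_0/(C_PK\chi)$. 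Discarding the remaining (nonnegative) dissipation terms and taking also $\delta\le\mu/4$, \eqref{ineq;Fe} becomes $\frac{d}{dt}\mathcal{F}_\ep+\delta\mathcal{F}_\ep\le C$ on $(0,\tmax)$; integrating over $(t_0,t)$ and letting $t_0\downarrow0$ (using continuity of the solution up to $t=0$) then gives $\mathcal{F}_\ep(t)\le\max\{\mathcal{F}_\ep(0),C/\delta\}$ for all $t\in(0,\tmax)$ and all $\ep>0$. The pointwise bound $\io|\nabla\cep|^2\le\|c_0\|_{L^\infty(\Omega)}\io\frac{|\nabla\cep|^2}{\cep}\le C$ is then immediate.

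For the remaining (space--time) estimates I would integrate \eqref{ineq;Fe} over an interval $(t,t+\tau)\subset[0,\tmax)$ with $\tau=\min\{1,\tfrac12\tmax\}\le1$: using the bound $\mathcal{F}_\ep(t)\le C$ just obtained together with $\mathcal{F}_\ep(t+\tau)\ge-\tfrac{|\Omega|}e$ yields $\int_t^{t+\tau}(\text{dissipation})\le C\tau+\mathcal{F}_\ep(t)-\mathcal{F}_\ep(t+\tau)\le C$. Since $\frac\mu4\io\nep^2\log\nep\ge-\tfrac{\mu|\Omega|}{8e}$ and every other dissipation term is nonnegative, this bounds each of $\int_t^{t+\tau}\io\nep^2\log\nep$, $\int_t^{t+\tau}\io\frac{|\nabla(\nep+\e)^{\frac{m+1}{2}}|^2}{\nep}$, $\int_t^{t+\tau}\io\cep|D^2\log\cep|^2$, $\int_t^{t+\tau}\io\frac{|\nabla\cep|^4}{\cep^3}$ and $\int_t^{t+\tau}\io|\nabla\uep|^2$ by $C$. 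The three still-missing estimates are then elementary: $\int_t^{t+\tau}\io\nep^2\le C$ is already contained in Lemma \ref{pote1}; the pointwise bound $|\nabla\cep|^4=\cep^3\,\frac{|\nabla\cep|^4}{\cep^3}\le\|c_0\|_{L^\infty(\Omega)}^3\,\frac{|\nabla\cep|^4}{\cep^3}$ (Lemma \ref{lem;Linf;c}) gives $\int_t^{t+\tau}\io|\nabla\cep|^4\le C$; and H\"older in space with exponents $(\tfrac32,3)$ followed by H\"older in time gives
\[
\int_t^{t+\tau}\io\bigl|\nabla(\nep+\e)^{\frac{m+1}{2}}\bigr|^{\frac43}\le\Bigl(\int_t^{t+\tau}\io\frac{|\nabla(\nep+\e)^{\frac{m+1}{2}}|^2}{\nep}\Bigr)^{\frac23}\Bigl(\int_t^{t+\tau}\io\nep^2\Bigr)^{\frac13}\le C.
\]

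I expect the passage from \eqref{ineq;Fe} to the self-improving inequality --- and within it item (ii) --- to be the main obstacle: the $c$-contribution to $\mathcal{F}_\ep$ is quadratic in $\nabla\cep$ while the corresponding dissipation is quartic, so only a square-root relation is available and one must argue through Young's inequality (equivalently, via a case distinction according as $\io\frac{|\nabla\cep|^2}{\cep}$ exceeds $1$) to recover a \emph{linear} lower bound; one also has to keep the various domain-dependent but $\ep$-independent constants straight so that a single $\delta>0$ serves all three summands at once. After the uniform bound on $\mathcal{F}_\ep$ is in hand, everything else is routine.
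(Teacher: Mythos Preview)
Your proposal is correct and follows essentially the same route as the paper: both convert \eqref{ineq;Fe} into a self-improving inequality $\frac{d}{dt}\mathcal{F}_\ep+k_1\mathcal{F}_\ep\le k_2$ using the comparison $s\log s\le s^2\log s$ (the paper adds a harmless $\tfrac1{2e}$), the estimate $\io\frac{|\nabla\cep|^2}{\cep}\le C\io\frac{|\nabla\cep|^4}{\cep^3}+C$ via Lemma~\ref{lem;Linf;c}, and Poincar\'e for $\uep$, and then integrate \eqref{ineq;Fe} over $(t,t+\tau)$ together with Lemmas~\ref{pote1} and~\ref{lem;Linf;c} for the remaining space--time bounds. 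Your write-up is in fact more detailed than the paper's (which relegates the last three estimates to a one-line reference), and your H\"older step for $\int_t^{t+\tau}\io|\nabla(\nep+\e)^{\frac{m+1}{2}}|^{4/3}$ makes explicit what the paper leaves implicit.
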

\begin{proof}
The proof is based on that of \cite[Lemma 2.11]{Lankeit_2016}. 
Noticing from the inequalities $s\log s \le \frac{1}{2e}+s^2 \log s$, 
$\io \frac{|\nabla \cep|^2}{\cep} \le \lp{\infty}{c_0}\io \frac{|\nabla \cep|^4}{\cep^3} + |\Omega|$ 
(from Lemma \ref{lem;Linf;c}) and 
$\io |\uep|^2  \le C_1 \io |\nabla \uep|^2$ 
with some $C_1>0$ (from the Poincar{\'{e}} inequality) 
that Lemma \ref{tastu} implies 
\begin{align*}
\frac {d\mathcal{F}_\e}{dt} + k_1 \mathcal{F}_\e \le k_2
\end{align*}
with some $k_1,k_2>0$, 
we establish the boundedness of $\mathcal{F}_\e$ on $(0,\tmax)$.  
Then for $\tau=\min\{1,\frac 12\tmax\}$ 
integrating the inequality \eqref{ineq;Fe} over $(t,t+\tau)$ with Lemmas \ref{pote1} and 
\ref{lem;Linf;c}
implies this lemma.  
\end{proof}

Then we shall establish global existence in approximate problem \eqref{Pe}  
by using a Moser--Alikakos-type procedure. 

%==========================Lem.3.4.==============================%
\begin{lem}\label{saru}
For all $\ep\in(0, 1)$, $T_{{\rm max}, \ep}=\infty$ holds.
\end{lem}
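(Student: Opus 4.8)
The plan is to argue by contradiction: assume $\tmax < \infty$ for some fixed $\ep \in (0,1)$ and derive a bound on the quantity appearing in the extensibility criterion of Lemma \ref{localsol}, namely $\norm{\nep(\cdot,t)}_{L^\infty(\om)} + \norm{\cep(\cdot,t)}_{W^{1,q}(\om)} + \norm{A^\theta \uep(\cdot,t)}_{L^2(\om)}$, uniformly on $(0,\tmax)$, contradicting the blow-up alternative. The three terms are handled in sequence, each feeding into the next.

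First I would bound $\norm{\cep(\cdot,t)}_{W^{1,q}(\om)}$. The $L^\infty$-bound $\norm{\cep(\cdot,t)}_{L^\infty(\om)} \le \lpc$ is already given by Lemma \ref{lem;Linf;c}. For the gradient, I would apply $\na$ to the variation-of-constants representation of $\cep$ associated with the second equation in \eqref{Pe}, written as $\cep_t = \Delta \cep - \uep\cdot\na\cep - \cep\frac1\ep\log(1+\ep\nep)$; using smoothing estimates for the Neumann heat semigroup $(e^{t\Delta})_{t\ge 0}$ together with the bound $\frac1\ep\log(1+\ep\nep)\le \nep$, the $L^\infty$-bound on $\cep$, the $W^{1,2}_{0,\sigma}$-bound on $\uep$ from Lemma \ref{me}, and the $L^2_tL^2_x$-bound on $\nep$, one gets control of $\norm{\na\cep(\cdot,t)}_{L^q(\om)}$ on finite time intervals. (On $\tmax<\infty$ this can alternatively be deferred and extracted along the Moser iteration below, but treating $c$ first keeps the scheme clean.)

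Next comes the heart of the proof: the uniform-in-time $L^\infty$-bound on $\nep$ via a Moser--Alikakos iteration. Testing the first equation in \eqref{Pe} with $(\nep+\ep)^{p-1}$ for $p\ge 2$, I would obtain, after integrating by parts and using $\na\cdot\uep=0$ to kill the convective term, a differential inequality of the form $\frac{d}{dt}\io(\nep+\ep)^p + c_p\io|\na(\nep+\ep)^{p/2}|^2 \le \chi(p-1)\io(\nep+\ep)^{p-2}\nep\,\na\nep\cdot\na\cep + \kappa p\io(\nep+\ep)^p$, where the logistic term $-\mu\nep^2$ is discarded as nonpositive and the degenerate diffusion contributes the good term $\tfrac{4(m+p-1)}{(m+p-1)^2}\cdots$-type quantity controlling $|\na(\nep+\ep)^{(m+p-1)/2}|^2$. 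The chemotaxis term is absorbed by Young's inequality at the cost of $\io(\nep+\ep)^{p-1}|\na\cep|^2$; here I would use the already-obtained $W^{1,q}$-bound on $\cep$ (with $q>3$) together with a Gagliardo--Nirenberg interpolation to bound this against a small multiple of the gradient term plus lower-order $L^p$-powers. Then a standard induction on $p=2^k$, tracking the growth of the constants, yields $\sup_{t\in(0,\tmax)}\norm{\nep(\cdot,t)}_{L^\infty(\om)}<\infty$. The main obstacle is precisely this step: one must ensure the chemotactic coupling does not overwhelm the dissipation for \emph{all} $m>0$, including the singular range $0<m<1$ where the good gradient term is weak; the saving grace is that the cross-diffusion coefficient $\frac{\nep}{1+\ep\nep}$ is bounded by $\frac1\ep$, so for each fixed $\ep$ the nonlinearity in the $n$-equation is effectively of linear-diffusion-with-bounded-drift type, and the iteration proceeds as in the $m=1$ theory (cf.\ \cite[Lemma 2.1]{Tao-Winkler_2011_non}).

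Finally, with $\nep$ bounded in $L^\infty$, I would bound $\norm{A^\theta\uep(\cdot,t)}_{L^2(\om)}$: applying $A^\theta$ to the Duhamel formula $\uep(\cdot,t) = e^{-tA}u_0 + \int_0^t e^{-(t-s)A}\mathbb{P}[n_\ep(\cdot,s)\na\Phi - (Y_\ep\uep\cdot\na)\uep](\cdot,s)\,ds$, using the decay estimates $\norm{A^\theta e^{-tA}\mathbb P f}_{L^2} \le C t^{-\theta-\frac{3}{4}(\frac{2}{r}-\frac12)}\norm{f}_{L^r}$ together with $n_\ep\na\Phi \in L^\infty_tL^2_x$ and the $L^2_tW^{1,2}$-bound on $\uep$ controlling the Navier--Stokes nonlinearity (with a bootstrap argument that is routine since $\theta<1$), one obtains boundedness of $A^\theta\uep$ on $(0,\tmax)$. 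Collecting the three bounds contradicts the extensibility criterion of Lemma \ref{localsol}, so $\tmax=\infty$.
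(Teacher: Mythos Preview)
Your overall strategy is right --- assume $\tmax<\infty$, bound all three quantities in the blow-up criterion, and derive a contradiction --- but the \emph{order} in which you bootstrap is not viable, and this is precisely where the paper's argument differs from yours.

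The gap is in your Step~1. To push $\nabla\cep$ into $L^\infty_t L^q_x$ with $q>3$ via the heat semigroup you must control the convective term $\uep\cdot\nabla\cep$ (or $\nabla\cdot(\cep\uep)$). With only the information from Lemma~\ref{me} --- namely $\uep\in L^\infty_t L^2_x\cap L^2_t W^{1,2}_x$ and $\nabla\cep\in L^\infty_t L^2_x\cap L^4_t L^4_x$ --- the smoothing exponents do not close: every choice of spatial exponent $r$ for the source leads to a time singularity $(t-s)^{-\alpha}$ with $\alpha$ too large to be integrated against the available $L^r_t$ norm of the source. Your parenthetical remark that Step~1 ``can alternatively be deferred'' does not help, because your Step~2 explicitly invokes the $W^{1,q}$-bound on $\cep$ to absorb the chemotaxis term in the Moser iteration. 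Note also that the observation $\frac{\nep}{1+\ep\nep}\le\frac1\ep$ is correct and useful, but it does \emph{not} make the equation ``linear-diffusion-with-bounded-drift type'' when $0<m<1$: the diffusion coefficient $m(\nep+\ep)^{m-1}\to 0$ as $\nep\to\infty$, so the Moser--Alikakos step genuinely needs a uniform-in-time higher-integrability bound on $\nabla\cep$ to close.

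The paper breaks the circularity by inserting an intermediate step before attacking $\nabla\cep$. First, using only the time-integrated bound $\int_0^T\!\io|\nabla\cep|^4\le C$ from Lemma~\ref{me} together with the logistic absorption $-\tfrac\mu2\io\nep^{p+1}$, one obtains $\nep\in L^\infty_t L^p_x$ for the \emph{finite} exponent $p=\min\{3+m,4\}$; the choice of $p$ is dictated by the constraint $2p-4+2(1-m)_+<p+1$ needed for the Young absorption. Since $p>3$ this is enough to run the Stokes semigroup argument and obtain $\|A^\theta\uep\|_{L^2}\le C$, hence $\uep\in L^\infty_{t,x}$ by the embedding $D(A^\theta)\hookrightarrow L^\infty$. \emph{Only now}, with $\uep$ bounded, is the convective term $\uep\cdot\nabla\cep$ tame enough for the heat-semigroup estimate to yield $\nabla\cep\in L^\infty_t L^6_x$. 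With that in hand the drift $\frac{\nep}{1+\ep\nep}\nabla\cep$ lies in $L^\infty_t L^6_x$, and the Moser--Alikakos iteration (applied to $\widetilde{n}_\ep=\max\{\nep,s_0\}$, as in \cite[Lemma~A.1]{Tao-Winkler_2012}) delivers the $L^\infty$-bound on $\nep$ for all $m>0$. So your ingredients are all present, but the sequencing $n_{L^p}\to u\to c\to n_{L^\infty}$ is essential; the order $c\to n\to u$ you propose cannot be initiated.
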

\begin{proof} 
The proof is based on that of \cite[Lemma 3.9]{Winkler_2016}. 
Assume that $T_{{\rm max}, \ep}<\infty$ and put $p:=\min\{3+m,4\}$. 
We shall first verify the $L^p$-estimate for $\nep$. 
We see from the first equation and the fact 
$\nabla \cdot \uep =0 $ on $\Omega \times (0,\infty)$ that 
\begin{align*}
\frac{1}{p}\frac{d}{dt}\int_{\Omega}n_{\ep}^{p} 
&= \int_{\Omega}n_{\ep}^{p-1} 
     \nabla\cdot \left( m(n_{\ep}+\e)^{m-1}\nabla n_{\ep}
     -\chi \frac{n_{\ep}}{1+\ep n_{\ep}}\nabla c_{\ep} \right) \\
& \quad \,  
+ \io \nep^{p-1} (\kappa n_{\ep}-\mu n_{\ep}^2)  
- \frac 1p \io u_{\ep}\cdot\nabla n_{\ep}^p  \\
&=-m(p-1)\int_{\Omega}n_{\ep}^{p-2}(n_{\ep}+\e)^{m-1}|\nabla n_{\ep}|^2 \\
     &\,\quad+\chi(p-1)\int_{\Omega}
                  \frac{n_{\ep}^{p-1}}{1+\ep n_{\ep}}\nabla n_{\ep}\cdot\nabla c_{\ep} 
    +\kappa\int_{\Omega}n_{\ep}^{p} 
   -\mu\int_{\Omega}n_{\ep}^{p+1}. 
\end{align*}
Here, since 
%$1+\ep n_{\ep} \geq \ep(n_{\ep}+\e)$ and 
%$p-m-1=\frac{p+1}{2}$, 
$2p-4+2(1-m)_+<p+1$, 
the Young inequality yields that 
\begin{align*}
\chi(p-1)  &\int_{\Omega} \frac{n_{\ep}^{p-1}}{1+\ep n_{\ep}}
                                                        \nabla n_{\ep}\cdot\nabla c_{\ep}  
\\
&
\le  \frac{\chi(p-1)}{\e}  \int_{\Omega} \nep^\frac{p-2}{2}(\nep+\e)^\frac{m-1}{2}|\nabla n_{\ep}|  
     n_{\ep}^{\frac{p}{2}-1}(\nep+\e)^{-\frac{m-1}{2}}
    |\nabla c_{\ep}| 
\\
&\leq 
\frac{m(p-1)}{2}\int_{\Omega} n_{\ep}^{p-2}(n_{\ep}+\e)^{m-1}|\nabla n_{\ep}|^2  
  +\int_{\Omega} n_{\ep}^{2p-4}(\nep+\e)^{2(1-m)}
  +C_{1}\int_{\Omega}|\nabla c_{\ep}|^4 
\\
&\leq \frac{m(p-1)}{2}\int_{\Omega} n_{\ep}^{p-2}(n_{\ep}+\e)^{m-1}|\nabla n_{\ep}|^2  
  +\frac{\mu}{2}\int_{\Omega}n_{\ep}^{p+1} 
+ C_2
  +C_{1}\int_{\Omega}|\nabla c_{\ep}|^4  
\end{align*}
on $(0, T_{{\rm max}, \ep})$ 
with some $C_{1}=C_1(\ep) > 0$ and $C_2=C_2(\ep)>0$,  
where we used the inequalities 
$(a+b)^r \le 2^{r}(a^r + b^r)$ 
($a,b \ge 0$, $r>0$) 
and $(a+b)^r \le b^r$ 
($a,b \ge 0$, $r\le 0$) 
to obtain the last inequality.  
Therefore we obtain from 
the positivity of $n_{\ep}$ 
that
\begin{align*}
\frac{1}{p}\frac{d}{dt}\int_{\Omega}n_{\ep}^{p} 
\leq C_1\int_{\Omega}|\nabla c_{\ep}|^4
+ \kappa\int_{\Omega}n_{\ep}^{p} 
+ C_2. 
\end{align*}
Thus it follows from Lemma \ref{me} 
that 
\begin{align*}%\label{saru1}
\int_{\Omega}n_{\ep}^p \leq C_{3} 
\quad \mbox{on}\ (0, T_{{\rm max}, \ep}), 
\end{align*}
where $C_{3}=C_{3}(\e)>0$. 
Then, aided by the $L^2$-estimate for 
$\nabla \uep$ (from a testing argument), 
we can obtain that   
\begin{align*}%\label{saru2}
\|A^\theta u_{\ep}(\cdot, t)\|_{L^{2}(\Omega)} \leq C_{4}
\end{align*}
for all $t\in (0,\tmax)$ with some $C_4=C_4(\ep)>0$. 
Then the continuous embedding $D(A^\theta) \hookrightarrow L^\infty(\Omega)$ 
implies the $L^\infty$-estimate for $\uep$. 
By using these estimates a standard $L^p$-$L^q$ estimate for 
the Neumann heat semigroup on  bounded domains 
and the inequality 
\[
  \lp{3}{u\cd \nabla c \cd} 
  \le \lp{\infty}{u\cd} \lp{6}{\nabla c\cd}^{\frac{1}{2}} 
  \lp{2}{\nabla c\cd}^{\frac{1}{2}}
\]
and the $L^2$-estimate for $\nabla \cep$ from Lemma \ref{me}
imply the $L^6$-estimate for $\nabla \cep$ 
(cf.\ an argument in the proof of \cite[Lemma 3.10]{CKM}). 
Finally we shall verify the $L^\infty$-estimate for $\nep$. 
Put $\nha (x,t):=\max\{\nep (x,t),s_0\} $ for $(x,t)\in \Omega\times (0,\tmax)$ with some 
$s_0>0$. 
Then we can see from $\nabla \cdot \uep=0$ in $\Omega\times (0,\tmax)$ that 
\begin{align*}
 \frac{d}{dt} \io \nha^p 
 &+ p (p-1) \io (\nep+\ep)^{m-1}\nha^{p-2}|\nabla \nha|^2 
\\
 &\le  p(p-1)\chi \io \nha^{p-2} \frac{n_{\ep}}{1+\ep n_{\ep}}\nabla c_{\ep} \cdot \nabla \nha 
 + \kappa \io \nha^{p-1} \nep
\end{align*}  
on $(0,\tmax)$. 
Thus, noting that 
\[  
\frac{n_{\ep}}{1+\ep n_{\ep}}\nabla c_{\ep} 
\in L^{\infty}(0, T_{{\rm max}, \ep}; L^6(\Omega)), 
\quad 
n_{\ep} 
\in L^{\infty}(0, T_{{\rm max}, \ep}; L^{3+2m}(\Omega)), 
\] 
from a Moser--Alikakos-type procedure 
(see the proof of \cite[Lemma A.1]{Tao-Winkler_2012}),  
we can attain that 
\[
\lp{\infty}{\nep\cd}\le C_5 
\]
for all $t\in (0,\tmax)$ with some $C_5=C_5(\ep) >0$, 
which with extensibility criterion shows $\tmax=\infty$ for each $\e \in (0,1)$. 
\end{proof}

\section{Uniform-in-$\e$ estimates}\label{Sec3}

In this section we collect lemmas which are needed 
to show convergence of solutions of \eqref{Pe} as $\ep \searrow 0$. 
Here the case that $m=1$ has already been dealt with in \cite{Lankeit_2016}. 
Thus we shall consider the case that $m>0$ with $m\neq 1$. 
From Lemma \ref{me} we only know 
not some estimate for $\nabla \nep$ but   
$L^{\frac{4}{3}}_{\rm loc}([0,\infty); W^{1,\frac{4}{3}}(\Omega))$-boundedness of 
$((\nep+\e)^{\frac{m+1}{2}})_{\e\in (0,1)}$. 
However, it seems to be difficult to derive an enough 
regularity of $\pa_t (\nep+\e)^{\frac{m+1}{2}}$ for each $m>0$. 
Therefore we need to establish an estimate for 
$\nabla (\nep+\ep)^\gamma$ with some $\gamma < \frac{m+1}{2}$. 
The following lemma is a cornerstone in the proof of Theorem \ref{mainthm1}.  

\begin{lem}\label{tool1}
For all $T>0$ there exists a constant $C=C(T)>0$ such that  
\begin{align*}
\int_0^T\int_{\Omega}|\nabla (n_{\ep}+\ep)^{\frac{m}{2}}|^2 
\leq C \quad \mbox{on}\ (0, T). 
\end{align*} 
\end{lem}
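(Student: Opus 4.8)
The plan is to exploit the already-available bound \eqref{test}-type estimate that is hidden inside Lemma~\ref{me}: namely $\int_0^T\int_\Omega \frac{|\nabla(n_\ep+\e)^{\frac{m+1}{2}}|^2}{n_\ep}\le C(T)$, equivalently $\int_0^T\int_\Omega (n_\ep+\e)^{m-1}\frac{|\nabla n_\ep|^2}{n_\ep}\le C(T)$. Since $n_\ep>0$, this immediately yields $\int_0^T\int_\Omega (n_\ep+\e)^{m-2}|\nabla n_\ep|^2 \le \int_0^T\int_\Omega (n_\ep+\e)^{m-1}\frac{|\nabla n_\ep|^2}{n_\ep}\le C(T)$, because $(n_\ep+\e)^{m-2}\le \frac{(n_\ep+\e)^{m-1}}{n_\ep}$ follows from $n_\ep\le n_\ep+\e$. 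Then the pointwise identity $|\nabla(n_\ep+\e)^{\frac m2}|^2 = \frac{m^2}{4}(n_\ep+\e)^{m-2}|\nabla n_\ep|^2$ converts this into $\int_0^T\int_\Omega |\nabla(n_\ep+\e)^{\frac m2}|^2 = \frac{m^2}{4}\int_0^T\int_\Omega (n_\ep+\e)^{m-2}|\nabla n_\ep|^2 \le \frac{m^2}{4}C(T)$, which is exactly the claimed bound. This is essentially the computation sketched in the introduction, so the lemma is a short consequence of Lemma~\ref{me} together with two elementary algebraic manipulations.

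The first key step is to record, from Lemma~\ref{me}, that for each $T>0$ there is $C_1(T)>0$ with $\int_0^T\int_\Omega \frac{|\nabla(n_\ep+\e)^{\frac{m+1}{2}}|^2}{n_\ep}\le C_1(T)$ for all $\ep\in(0,1)$; here one sums the local-in-time estimates of Lemma~\ref{me} over a finite number of unit time intervals covering $[0,T]$, using that $\tmax=\infty$ by Lemma~\ref{saru}. The second step is the chain-rule identity $\nabla(n_\ep+\e)^{\frac{m+1}{2}} = \frac{m+1}{2}(n_\ep+\e)^{\frac{m-1}{2}}\nabla n_\ep$, which gives $\frac{|\nabla(n_\ep+\e)^{\frac{m+1}{2}}|^2}{n_\ep} = \left(\frac{m+1}{2}\right)^2\frac{(n_\ep+\e)^{m-1}|\nabla n_\ep|^2}{n_\ep}$. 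The third step is the comparison $(n_\ep+\e)^{m-2}|\nabla n_\ep|^2 \le \frac{(n_\ep+\e)^{m-1}|\nabla n_\ep|^2}{n_\ep}$, valid since $0<n_\ep\le n_\ep+\e$. The fourth and final step is the identity $\nabla(n_\ep+\e)^{\frac m2} = \frac m2 (n_\ep+\e)^{\frac{m-2}{2}}\nabla n_\ep$, so $|\nabla(n_\ep+\e)^{\frac m2}|^2 = \frac{m^2}{4}(n_\ep+\e)^{m-2}|\nabla n_\ep|^2$; integrating and combining the inequalities gives $\int_0^T\int_\Omega|\nabla(n_\ep+\e)^{\frac m2}|^2 \le \frac{m^2}{4}\cdot\frac{4}{(m+1)^2}C_1(T) = \frac{m^2}{(m+1)^2}C_1(T) =: C(T)$.

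There is essentially no serious obstacle here — the result is a bookkeeping corollary of the energy estimate. The only point requiring a word of care is the passage from the short-time (length-$\tau$ with $\tau=\min\{1,\tfrac12\tmax\}$) estimates of Lemma~\ref{me} to an estimate on the full interval $(0,T)$: one partitions $[0,T]$ into at most $\lceil T\rceil$ subintervals of length at most $1$ and adds the corresponding bounds, with the constant $C(T)$ growing linearly in $T$. Everything else is the two chain-rule identities and the elementary inequality $n_\ep\le n_\ep+\e$; no new differential inequality or test-function argument is needed.
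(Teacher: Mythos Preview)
Your proof is correct and follows essentially the same approach as the paper. Both arguments use Lemma~\ref{me} to bound $\int_0^T\!\int_\Omega \frac{|\nabla(n_\ep+\e)^{(m+1)/2}|^2}{n_\ep}$ and then invoke the elementary comparison $n_\ep\le n_\ep+\e$ together with the chain-rule identities; the paper compresses this into the single line $|\nabla(n_\ep+\e)^{m/2}|^2 \le \frac{|\nabla(n_\ep+\e)^{(m+1)/2}|^2}{n_\ep+\e} \le \frac{|\nabla(n_\ep+\e)^{(m+1)/2}|^2}{n_\ep}$ (up to the harmless constant factor $m^2/(m+1)^2$), whereas you spell out the intermediate step via $(n_\ep+\e)^{m-2}|\nabla n_\ep|^2$ and are slightly more explicit about summing the length-$\tau$ estimates over $[0,T]$.
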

\begin{proof}
In light of Lemma \ref{me} 
we see that  there exists $C_1=C_1(T)>0$ such that 
\begin{align*}
\int_0^T\int_{\Omega}|\nabla (n_{\ep}+\ep)^{\frac{m}{2}}|^2 
&= 
\int_0^T\int_{\Omega}\frac{|\nabla (n_{\ep}+\ep)^{\frac{m+1}{2}}|^2}{n_{\ep}+\ep} 
\\
&\le 
\int_0^T\int_{\Omega}\frac{|\nabla (n_{\ep}+\ep)^{\frac{m+1}{2}}|^2}{n_{\ep}}
\leq C_1
\end{align*} 
with $C_1=C_1(T)>0$.
\end{proof}

We next confirm the following lemma which will play an important role
in deriving some time regularity of $(\nep+\e)^\frac{m}{2}$.  
%and moreover a key estimate which is important in deriving 
%some time regularity of $(\nep+\e)^\frac{m}{2}$ holds. 

%==========================Lem.3.5.==============================%
\begin{lem}\label{tori}
Let $m>0$ be such that $m\neq 1$. 
Then for all $T>0$ there exists a constant $C=C(T)>0$ such that  for all $\e \in (0,1)$, 
\begin{align*}
\int_{\Omega}(n_{\ep} +\e)^{m} \leq C
\end{align*} 
on $(0,T)$ and 
\begin{align*}
\iio (n_{\ep}+\e)^{2m-3}|\nabla n_{\ep}|^2 \le C
\end{align*} 
hold. 
\end{lem}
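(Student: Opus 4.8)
The plan is to test the first equation of \eqref{Pe} against the weight $(\nep+\e)^{m-1}$. Since $\na\cdot\uep=0$ and $\uep$ vanishes on $\pa\om$, the convective contribution equals $\tfrac1m\io\uep\cdot\na(\nep+\e)^m$ and hence drops out, while one integration by parts (using $\pa_\nu\nep=\pa_\nu\cep=0$ on $\pa\om$) turns the diffusive term into $-m(m-1)\io(\nep+\e)^{2m-3}|\na\nep|^2$. In this way one arrives, on $(0,\tmax)$, at
\begin{align*}
&\frac1m\frac{d}{dt}\io(\nep+\e)^m + m(m-1)\io(\nep+\e)^{2m-3}|\na\nep|^2 \\
&\qquad = \chi(m-1)\io(\nep+\e)^{m-2}\frac{\nep}{1+\e\nep}\na\nep\cdot\na\cep + \io(\nep+\e)^{m-1}(\kappa\nep-\mu\nep^2).
\end{align*}
Throughout I would use $\frac{\nep}{1+\e\nep}\le\nep\le\nep+\e$, the elementary inequalities $(a+b)^r\le 2^r(a^r+b^r)$ and $s^m\le1+s$ for $0<m\le1$, the positivity of $\nep$, and — summing the estimates of Lemma \ref{me} over unit time intervals, which is legitimate because $\tmax=\infty$ by Lemma \ref{saru} — the $\e$-uniform, $T$-dependent bounds for $\iio\nep^2$, $\iio|\na\cep|^2$, $\iio|\na\cep|^4$, $\iio(\nep+\e)^{m-2}|\na\nep|^2$ (Lemma \ref{tool1}) and $\iio\frac{(\nep+\e)^{m-1}}{\nep}|\na\nep|^2$ (Lemma \ref{me}).

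For $m>1$ the term $m(m-1)\io(\nep+\e)^{2m-3}|\na\nep|^2$ sits on the favorable side. Here I would bound the chemotactic integrand by $\chi(m-1)(\nep+\e)^{m-1}|\na\nep||\na\cep|$, split $(\nep+\e)^{m-1}|\na\nep|=(\nep+\e)^{(2m-3)/2}|\na\nep|\cdot(\nep+\e)^{1/2}$, and apply Young's inequality, absorbing the $(\nep+\e)^{2m-3}|\na\nep|^2$-part into $\tfrac{m(m-1)}2\io(\nep+\e)^{2m-3}|\na\nep|^2$ and bounding the remaining $\io(\nep+\e)|\na\cep|^2$ by $\io\nep^2+|\om|+\tfrac12\io|\na\cep|^4$; the logistic integrand $(\nep+\e)^{m-1}(\kappa\nep-\mu\nep^2)$ is bounded above uniformly in $\e\in(0,1)$. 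This produces $\tfrac1m\tfrac{d}{dt}\io(\nep+\e)^m+\tfrac{m(m-1)}2\io(\nep+\e)^{2m-3}|\na\nep|^2\le C(1+\io\nep^2+\io|\na\cep|^4)$; integrating over $(0,t)$ for $t\in(0,T)$ and using $\io(n_0+\e)^m\le(\lp{\infty}{n_0}+1)^m|\om|$ then yields both assertions at once, with a constant growing at most linearly in $T$.

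For $0<m<1$ the first assertion follows immediately from $s^m\le1+s$ and Lemma \ref{pote1}. For the second the sign of the diffusive term in the identity above is now against us, so instead of absorbing I would solve the identity for it and integrate over $(0,T)$, obtaining
\begin{align*}
&m(1-m)\iio(\nep+\e)^{2m-3}|\na\nep|^2 = \frac1m\io(\nep(\cdot,T)+\e)^m - \frac1m\io(n_0+\e)^m \\
&\qquad - \chi(m-1)\iio(\nep+\e)^{m-2}\frac{\nep}{1+\e\nep}\na\nep\cdot\na\cep - \kappa\iio(\nep+\e)^{m-1}\nep + \mu\iio(\nep+\e)^{m-1}\nep^2.
\end{align*}
The first term on the right is bounded by the first assertion, the second and fourth are nonpositive, and $\mu\iio(\nep+\e)^{m-1}\nep^2\le\mu\iio\nep^{m+1}\le\mu\iio(\nep^2+1)$. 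For the chemotactic term I would use $(\nep+\e)^{m-2}\nep\le(\nep+\e)^{m-1}$ and Young's inequality to dominate its modulus by $C\bigl(\iio(\nep+\e)^{m-2}|\na\nep|^2+\iio(\nep+\e)^m|\na\cep|^2\bigr)$, the first summand being finite by Lemma \ref{tool1}; since $(\nep+\e)^m\le 2^m(\nep^m+1)$ when $\e<1$, it then remains to bound $\iio\nep^m|\na\cep|^2$, for which Young's inequality with exponents $\tfrac2m$ and $\tfrac2{2-m}$ gives $\nep^m|\na\cep|^2\le\nep^2+C|\na\cep|^{4/(2-m)}$ with $\tfrac4{2-m}\in(2,4)$, so that a H\"older interpolation of $\iio|\na\cep|^{4/(2-m)}$ between $\iio|\na\cep|^2$ and $\iio|\na\cep|^4$ finishes the job. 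Since $m(1-m)>0$, the desired bound follows.

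I expect the case $0<m<1$ to be the main obstacle. There the degenerate diffusion enters the natural energy identity with the unfavorable sign, so the estimate cannot be obtained by simply discarding that term as for $m>1$; instead one must read the identity as an expression for $\iio(\nep+\e)^{2m-3}|\na\nep|^2$ and control the remaining quantities — above all the chemotactic term, by feeding its two factors into the $\na\nep$-dissipation of Lemma \ref{tool1} and into the $L^2$--$L^4$ interpolation for $\na\cep$ coming from Lemmas \ref{lem;Linf;c} and \ref{me}. Apart from this point, the proof is a routine combination of Young's and H\"older's inequalities with estimates already established.
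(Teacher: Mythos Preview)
Your proof is correct and follows the same overall strategy as the paper: test the first equation of \eqref{Pe} against $(\nep+\e)^{m-1}$ and treat the cases $m>1$ and $0<m<1$ separately, absorbing for $m>1$ and reading off the gradient integral for $0<m<1$. The only notable difference is your treatment of the chemotactic term when $0<m<1$. The paper uses the \emph{same} Young splitting as in the case $m>1$, namely
\[
\chi|m-1|(\nep+\e)^{m-1}|\na\nep||\na\cep|\le\frac{m|m-1|}{2}(\nep+\e)^{2m-3}|\na\nep|^2+(\nep+\e)^2+C|\na\cep|^4,
\]
and simply absorbs the first summand back into the diffusive term, retaining $\tfrac{m(1-m)}{2}\io(\nep+\e)^{2m-3}|\na\nep|^2$ on the favorable side; this avoids any appeal to Lemma~\ref{tool1} or interpolation for $\na\cep$. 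You instead split as $(\nep+\e)^{(m-2)/2}|\na\nep|\cdot(\nep+\e)^{m/2}|\na\cep|$ and feed the first factor into Lemma~\ref{tool1} and the second into an $L^2$--$L^4$ interpolation. Both routes are valid; the paper's is slightly more economical. Similarly, for $m>1$ you replace the paper's Gronwall step (via $\kappa\io(\nep+\e)^m$) by the pointwise observation that $s\mapsto(s+\e)^{m-1}(\kappa s-\mu s^2)$ is uniformly bounded above on $[0,\infty)$ for $\e\in(0,1)$, which is a harmless simplification.
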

\begin{proof}
From the first equation in \eqref{P} with $\nabla \cdot \uep =0$ in $\Omega \times (0,\infty)$ 
we have 
\begin{align*}
\frac{1}{m}\frac{d}{dt}\int_{\Omega}(n_{\ep}+\e)^{m} 
& = -m(m-1)\int_{\Omega}(n_{\ep}+\e)^{2m-3}|\nabla n_{\ep}|^2 
\\ 
&\,\quad 
+(m-1) \chi \int_{\Omega}(n_{\ep}+\e)^{m-2}
        \frac{n_{\ep}}{1+\ep n_{\ep}}\nabla n_{\ep}\cdot\nabla c_{\ep} 
\\ & \quad \, 
+ \int_{\Omega}(n_{\ep}+\e)^{m-1} (\kappa\nep-\mu \nep^2).
\end{align*} 
Here we note from the Young inequality that 
\begin{align}\label{ineq;ab;intnepm}
   \left| \chi (m-1) 
        \int_{\Omega}(n_{\ep}+\e)^{m-2}
        \frac{n_{\ep}}{1+\ep n_{\ep}}\nabla n_{\ep}\cdot\nabla c_{\ep} 
      \right|  
&\leq \chi |m-1| \int_{\Omega}(n_{\ep}+\e)^{m-1}
             |\nabla n_{\ep}||\nabla c_{\ep}| 
\\ \notag 
&\leq \frac{m|m-1|}{2}\int_{\Omega}
(n_{\ep}+\e)^{2m-3}|\nabla n_{\ep}|^2 
\\ \notag
&\quad \, 
+ \int_{\Omega} (n_{\ep}+\e)^2 + C_{1}\int_{\Omega} |\nabla c_{\ep}|^4,
\end{align}
where $C_{1}>0$. 
In the case that $m>1$, 
since $m-1>0$, 
we obtain that 
\begin{align*}
\frac{1}{m}\frac{d}{dt}\int_{\Omega}(n_{\ep}+\e)^{m} 
+& \frac{m(m-1)}{2}\int_{\Omega}
(n_{\ep}+\e)^{2m-3}|\nabla n_{\ep}|^2 \\ 
&\leq  \int_{\Omega} (n_{\ep}+\e)^2 + C_{1}\int_{\Omega} |\nabla c_{\ep}|^4 
     + \kappa\int_{\Omega}(n_{\ep}+\e)^{m}, 
\end{align*}
which together with 
Lemma \ref{me} shows that there is $C_2=C_2(T)>0$ such that 
\begin{align*}
\int_{\Omega}(n_{\ep}+\e)^{m} \leq C_{2}
\end{align*}
on $(0,T)$ for all $\ep\in (0,1)$ 
and 
\begin{align*}
\int_0^T\int_{\Omega}(n_{\ep}+\ep)^{2m-3}|\nabla n_{\ep}|^2 
\le C_2
\end{align*}
for all $\ep\in (0,1)$. 
On the other hand, in the case that $0<m<1$, 
we have from \eqref{ineq;ab;intnepm} that 
\begin{align*}
\frac{1}{m}\frac{d}{dt}\int_{\Omega}(n_{\ep}+\e)^{m} 
& \geq  \frac{m(1-m)}{2}\int_{\Omega}
(n_{\ep}+\ep)^{2m-3}|\nabla n_{\ep}|^2 \\ 
&\quad\,  - \int_{\Omega} (n_{\ep}+\e)^2 - C_{1}\int_{\Omega} |\nabla c_{\ep}|^4 
     - \mu \int_{\Omega}(n_{\ep}+\e)^{m+1}.  
\end{align*} 
Thus, integrating it over $(0,T)$, we derive from applications of the Young inequality 
\begin{align*}
\io (\nep+\e)^m \le m \io (\nep+1)  + (1-m)|\Omega| 
\end{align*}
and 
\begin{align*}
\int_0^T \io (\nep+\e)^{m+1} \le \frac{m+1}{2} \int_0^T \io (\nep+1)^2  + \frac{1-m}{2}|\Omega|T
\end{align*}
with Lemma \ref{pote1} that
\begin{align*}
\frac{m(1-m)}{2}\int_0^T \int_{\Omega}
(n_{\ep}+\ep)^{2m-3}|\nabla n_{\ep}|^2 
\le C_3
\end{align*}
with some $C_3=C_3(T)>0$. 
\end{proof}

In order to see some time regularity of $(\nep + \e)^\frac{m}{2}$ 
we will give the following lemma. 

\begin{lem}\label{tool3}
Let $m>0$ be such that $m\neq1$. 
Then for all $T>0$ there exists a constant $C=C(T)>0$ such that  
for all $\e \in (0,1)$, 
\begin{align*}
\io (\nep+\e)^{m-1} \le C
\end{align*}
on $(0,T)$ and 
\begin{align*}
\int_0^T\int_{\Omega}(n_{\ep}+\ep)^{2m-4}|\nabla n_{\ep}|^2 
\leq C 
\end{align*} 
hold. 
\end{lem}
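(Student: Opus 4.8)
The plan is to test the first equation in \eqref{Pe} with a suitable power of $(\nep+\e)$. Analogously to the proof of Lemma \ref{tori}, I would multiply the first equation by $(\nep+\e)^{m-2}$ (this is the formal choice that produces the exponent $m-1$ on the left, since $\frac{1}{m-1}\frac{d}{dt}\io(\nep+\e)^{m-1}$ arises up to a constant from $\io(\nep+\e)^{m-2}n_{\ep t}$). Using $\nabla\cdot\uep=0$ to discard the convective term $\io \uep\cdot\nabla(\nep+\e)^{m-1}=0$, the diffusion term $\io (\nep+\e)^{m-2}\Delta(\nep+\e)^m$ produces, after integration by parts, a term $-m(m-2)\io(\nep+\e)^{2m-4}|\nabla\nep|^2$, which (for $m\neq 2$) is the quantity we want to control. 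The chemotaxis term produces $\chi(m-2)\io(\nep+\e)^{m-3}\frac{\nep}{1+\e\nep}\nabla\nep\cdot\nabla\cep$, and the logistic term produces $\io(\nep+\e)^{m-2}(\kappa\nep-\mu\nep^2)$.

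The main step is then to absorb the chemotaxis term by the Young inequality. Bounding $\frac{\nep}{1+\e\nep}\le \nep\le \nep+\e$, the cross term is dominated by $\chi|m-2|\io(\nep+\e)^{m-2}|\nabla\nep||\nabla\cep|$, which by Young's inequality splits as $\tfrac{|m||m-2|}{2}\io(\nep+\e)^{2m-4}|\nabla\nep|^2 + C\io(\nep+\e)^{2m-4}(\nep+\e)^2|\nabla\cep|^2$. The gradient part is absorbed into the diffusion term (keeping half of it when $m>2$, or noting that when $m<2$ the diffusion term already has the favourable sign, exactly as in Lemma \ref{tori}); the remaining part is $C\io(\nep+\e)^{2m-2}|\nabla\cep|^2$. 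This last integral must be handled: I would further apply Young's inequality to write $(\nep+\e)^{2m-2}|\nabla\cep|^2\le \delta(\nep+\e)^{p}+C_\delta|\nabla\cep|^{4}$ for a suitable exponent $p$ and then invoke Lemma \ref{me}, which gives both $\io|\nabla\cep|^4$ in space-time and (via the $L^2\log L$-bound) control of powers of $\nep$. The remaining ODE for $y_\e(t):=\io(\nep+\e)^{m-1}$ then has the form $y_\e'(t)\le (\text{integrable in }t) + \kappa\,(\text{something controlled})$, and Lemma \ref{tori} (giving $\io(\nep+\e)^m\le C$) together with Lemma \ref{me} supplies the bounds needed to close it via Grönwall on $(0,T)$; integrating the differential inequality then yields the space-time bound on $\int_0^T\io(\nep+\e)^{2m-4}|\nabla\nep|^2$.

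The case distinction $m>2$ versus $m<2$ (the sign of $m-2$) is handled exactly as the $m>1$ versus $m<1$ dichotomy in the proof of Lemma \ref{tori}: when $m-2>0$ the left-hand side already carries the correct sign and one absorbs half of it; when $m-2<0$ the diffusion term appears on the right with a plus sign, so one rearranges and integrates in time, using the $L^1$- and $L^2$-bounds on $\nep$ from Lemmas \ref{pote1} and \ref{me} to bound the remaining lower-order terms like $\io(\nep+\e)^{m-2}\nep^2\le \io(\nep+\e)^m$. The borderline value $m=2$ must be excluded or treated separately (there the coefficient $m(m-2)$ vanishes and one simply gets $\io(\nep+\e)^{m-1}=\io(\nep+\e)$ bounded by Lemma \ref{pote1}, with no gradient information — but since $2m-4=0$ in that case the claimed space-time integral is just $\int_0^T\io|\nabla\nep|^2$, which would need the separate input of Lemma \ref{me} via $m-1=1$; I expect the statement is really used for $m$ in a range avoiding this, or the authors absorb it into the constant).

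The part I expect to be the genuine obstacle is controlling the term $\io(\nep+\e)^{2m-2}|\nabla\cep|^2$ uniformly in $\e$: for large $m$ the power $2m-2$ is high, and one only has $\io|\nabla\cep|^4$ and $\io(\nep+\e)^m$ from the energy estimate, so the Young-inequality split must be arranged so that the power of $(\nep+\e)$ produced does not exceed $m$ (this forces $2m-2\cdot\tfrac{q}{q-1}\le m$ type constraints on the Hölder exponents, which is where the condition $m\neq 1$ and possibly further restrictions on $m$ enter). Getting these exponents to balance against precisely the quantities furnished by Lemmas \ref{me} and \ref{tori}, rather than requiring a higher integrability of $\nep$ than is available, is the delicate bookkeeping at the heart of the proof.
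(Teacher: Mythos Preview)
Your overall plan---test the first equation with $(\nep+\e)^{m-2}$, integrate by parts, and absorb the chemotaxis cross term into the diffusion term via Young's inequality---is exactly what the paper does. However, you overcomplicate the Young split and thereby manufacture an obstacle that is not really there. After bounding $\frac{\nep}{1+\e\nep}\le \nep+\e$, the cross term is controlled by
\[
C\io (\nep+\e)^{m-2}|\nabla\nep|\,|\nabla\cep|
= C\io \bigl[(\nep+\e)^{m-2}|\nabla\nep|\bigr]\cdot|\nabla\cep|,
\]
and the obvious Young split of \emph{this} product gives $\tfrac{m|(m-1)(m-2)|}{2}\io(\nep+\e)^{2m-4}|\nabla\nep|^2 + C_1\io|\nabla\cep|^2$. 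No factor of $(\nep+\e)^{2m-2}$ ever appears in front of $|\nabla\cep|^2$; the remainder is simply $\io|\nabla\cep|^2$, which is bounded uniformly in time by Lemma~\ref{me}. The entire last paragraph of your proposal, about balancing high powers of $\nep$ against $|\nabla\cep|^4$, is therefore unnecessary, and no restriction on $m$ arises here.

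One further correction: because $\frac{d}{dt}\io(\nep+\e)^{m-1}$ carries the factor $(m-1)$, the diffusion coefficient is $-m(m-1)(m-2)$, not $-m(m-2)$. Consequently the case distinction is governed by the sign of $(m-1)(m-2)$, not of $(m-2)$ alone: the ranges $m>2$ and $0<m<1$ (where $(m-1)(m-2)>0$) are handled together by a Gr\"onwall argument, while $1<m<2$ requires the reverse-inequality rearrangement you describe. Your treatment of the borderline $m=2$ is correct: there the claimed gradient bound is just $\iio|\nabla\nep|^2$, and Lemma~\ref{tool1} (with $\tfrac{m}{2}=1$) gives it directly.
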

\begin{proof}
We derive from the first equation in \eqref{Pe} 
and integration by parts that 
\begin{align}\label{Blouson}
\frac{d}{dt}\int_{\Omega}(n_{\ep}+\ep)^{m-1} 
&= -m(m-1)(m-2)\int_{\Omega}(n_{\ep}+\ep)^{2m-4}|\nabla n_{\ep}|^2 
\\ \notag 
&\,\quad+\chi(m-1)(m-2)
                 \int_{\Omega}(n_{\ep}+\ep)^{m-3}\frac{n_{\ep}}{1+\ep n_{\ep}}
                                                                \nabla n_{\ep}\cdot\nabla c_{\ep} 
\\ \notag 
&\,\quad+\kappa(m-1)\int_{\Omega}(n_{\ep}+\ep)^{m-2}n_{\ep} 
           -\mu(m-1)\int_{\Omega}(n_{\ep}+\ep)^{m-2}n_{\ep}^2. 
\end{align}
Now we note from the Young inequality and Lemma \ref{me} that 
\begin{align}\label{ineq;m-1}
& \left| \chi(m-1)(m-2)
                 \int_{\Omega}(n_{\ep}+\ep)^{m-3}\frac{n_{\ep}}{1+\ep n_{\ep}}
                                                                \nabla n_{\ep}\cdot\nabla c_{\ep} 
   \right| 
\\ \notag
&\leq \frac{m|(m-1)(m-2)|}{2}\int_{\Omega}(n_{\ep}+\ep)^{2m-4}|\nabla n_{\ep}|^2 
        + C_1\int_{\Omega}|\nabla c_{\ep}|^2 
\\ \notag
&\leq \frac{m|(m-1)(m-2)|}{2}\int_{\Omega}(n_{\ep}+\ep)^{2m-4}|\nabla n_{\ep}|^2 
        + C_2 
\end{align}
with $C_1, C_2>0$. 
We first consider the cases that $m> 2$ and $0<m<1$. 
Since it follows that $(m-1)(m-2)>0$ and 
that if $m>2$ then  
\begin{align*}
  \kappa(m-1)\int_{\Omega}(n_{\ep}+\ep)^{m-2}n_{\ep} 
           -\mu(m-1)\int_{\Omega}(n_{\ep}+\ep)^{m-2}n_{\ep}^2
  \le 
    \kappa(m-1)\int_{\Omega}(n_{\ep}+\ep)^{m-1}
\end{align*}
and if $0<m<1$ then 
\begin{align*}
  \kappa(m-1)\int_{\Omega}(n_{\ep}+\ep)^{m-2}n_{\ep} 
           -\mu(m-1)\int_{\Omega}(n_{\ep}+\ep)^{m-2}n_{\ep}^2
  \le 
     \mu(1-m)\int_{\Omega}(n_{\ep}+\ep)^{m} \le C_3
\end{align*}
with some $C_3=C_3(T)>0$ (from Lemma \ref{tori}), 
we infer from \eqref{Blouson} that 
\begin{align*}
&\frac{d}{dt}\int_{\Omega}(n_{\ep}+\ep)^{m-1} 
+ \frac{m(m-1)(m-2)}{2}\int_{\Omega}(n_{\ep}+\ep)^{2m-4}|\nabla n_{\ep}|^2 
\\ \notag
&\leq \kappa|m-1|\int_{\Omega}(n_{\ep}+\ep)^{m-1} + C_4 
\end{align*}
with some $C_4=C_4(T)>0$, 
and hence there exists a constant $C_5=C_5(T)>0$ such that for all $\ep\in(0,1)$, 
\begin{align*}
\int_{\Omega}(n_{\ep}+\ep)^{m-1} \le C_5 
\end{align*}
on $(0,T)$ and 
\begin{align*} 
\int_{\Omega}(n_{\ep}+\ep)^{2m-4}|\nabla n_{\ep}|^2 
\leq C_5.
\end{align*}
On the other hand, in the case that $1<m<2$,  
we see from \eqref{Blouson} and \eqref{ineq;m-1} that  
\begin{align*}
\frac{d}{dt}\int_{\Omega}(n_{\ep}+\ep)^{m-1} 
&\geq \frac{m(m-1)(2-m)}{2}\int_{\Omega}(n_{\ep}+\ep)^{2m-4}|\nabla n_{\ep}|^2
       -C_6 
\\ \notag
&\quad\, -\mu(m-1)\int_{\Omega}(n_{\ep}+\ep)^{m} 
\end{align*}
with some $C_6>0$. 
Hence, noticing that 
$
\io (\nep+\ep)^{m-1} \le  (m-1) \io (\nep+1) + (2-m)|\Omega| 
$
and $(m-1)(2-m)>0$, 
we derive from 
Lemmas \ref{pote1} and \ref{tori} that 
\begin{align*}
\frac{m(m-1)(2-m)}{2} 
\int_0^T \int_{\Omega}(n_{\ep}+\ep)^{2m-4}|\nabla n_{\ep}|^2 
\leq C_7
\end{align*}
with some $C_7=C_7(T)>0$. 
Finally, in the case that $m=2$, Lemma \ref{tool1} implies this lemma. 
\end{proof}

The following time regularity of $(\nep+\e)^\frac{m}{2}$ 
will be useful in applying 
a Lions--Aubin type lemma later.

%==========================Lem.3.10.==============================%
\begin{lem}\label{tool4}
Let $m>0$ be such that $m\neq1$. 
Then for all $T>0$ there exists a constant $C=C(T)>0$ satisfying
\[
  \|\pa_t (n_{\ep}+\ep)^{\frac{m}{2}}\|_{L^1(0, T; (W^{2, 4}_{0}(\Omega))^{*})} 
  \leq C \quad \mbox{for all}\ \ep \in (0, 1). 
\]

\end{lem}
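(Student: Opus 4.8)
The plan is to test the first equation in \eqref{Pe} against a power of $(\nep+\ep)$ that reproduces $\partial_t(\nep+\ep)^{m/2}$ after differentiation, and then dualize. Concretely, write the first equation in \eqref{Pe} in divergence form
\[
  \partial_t \nep = \nabla\cdot\Bigl( m(\nep+\ep)^{m-1}\nabla \nep - \chi\frac{\nep}{1+\ep\nep}\nabla\cep\Bigr) - \uep\cdot\nabla\nep + \kappa\nep - \mu\nep^2,
\]
and note that $\partial_t(\nep+\ep)^{m/2} = \frac{m}{2}(\nep+\ep)^{m/2-1}\partial_t\nep$. Hence for a test function $\varphi\in W^{2,4}_0(\Omega)$ with $\|\varphi\|_{W^{2,4}_0(\Omega)}\le 1$ I would compute $\int_\Omega \partial_t(\nep+\ep)^{m/2}\,\varphi$ by substituting the PDE, integrating the diffusion and chemotaxis terms by parts, and moving derivatives onto $\varphi$ (or onto $(\nep+\ep)^{m/2-1}\varphi$, whose $W^{1,\cdot}$-norm is controlled since $(\nep+\ep)^{m/2-1}$ has a gradient of the form $(\nep+\ep)^{m/2-2}\nabla\nep$). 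The goal is to bound the resulting spatial integrals, integrated in time over $(0,T)$, by a constant $C(T)$ uniformly in $\ep\in(0,1)$, using the estimates already collected.

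The estimates I would feed in are exactly Lemmas \ref{me}, \ref{tool1}, \ref{tori} and \ref{tool3}. The diffusion contribution produces (after the integrations by parts) terms involving $(\nep+\ep)^{m-2}|\nabla\nep|^2$, which is integrable in space-time by \eqref{test} together with $\frac{1}{\nep}\ge\frac{1}{\nep+\ep}$; the chemotaxis term yields terms like $\int(\nep+\ep)^{m/2-1}|\nabla\cep|$ (or $\int(\nep+\ep)^{m/2-2}|\nabla\nep||\nabla\cep|$), controlled by Young's inequality against the space-time bounds for $(\nep+\ep)^{2m-4}|\nabla\nep|^2$ (Lemma \ref{tool3}), for $(\nep+\ep)^{m}$ (Lemma \ref{tori}) and for $|\nabla\cep|^4$ and $|\nabla\cep|^2$ (Lemma \ref{me}); the convective term $\uep\cdot\nabla\nep$ rewrites, using $\nabla\cdot\uep=0$, as $\nabla\cdot(\nep\uep)$ and after integration by parts becomes $\int\nep\uep\cdot\nabla\bigl((\nep+\ep)^{m/2-1}\varphi\bigr)$, handled by Hölder together with $L^2_t L^6_x$-control of $\uep$ (from $\int|\nabla\uep|^2\le C$ and Sobolev embedding) and the gradient estimates on powers of $(\nep+\ep)$; the zeroth-order logistic terms $\kappa\nep-\mu\nep^2$ times $(\nep+\ep)^{m/2-1}\varphi$ are dominated by $L^1_tL^{?}_x$-bounds coming from $\int_0^T\!\!\int_\Omega\nep^2\le C$ and $\int_\Omega(\nep+\ep)^m\le C$. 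Since each piece is at worst $L^1$ in time after these applications of Hölder and Young, summing gives the claimed $L^1(0,T;(W^{2,4}_0(\Omega))^\ast)$ bound; note that $W^{2,4}_0(\Omega)\hookrightarrow W^{1,\infty}(\Omega)$ in dimension $3$, which is precisely what is needed to absorb $\nabla\varphi$ in $L^\infty$ against the various $L^1$ spatial factors.

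The main obstacle is the case distinction forced by the sign of $m-1$ (and of $m-2$): the exponents $m/2-1$ and $m/2-2$ can be negative, so the algebraic manipulations $(\nep+\ep)^{m/2-1}\le\varepsilon^{m/2-1}$ that are harmless for $m\ge 2$ must instead be replaced, for $0<m<2$, by genuine use of the space-time integrability of negative-power gradient quantities from Lemmas \ref{tori} and \ref{tool3} — which is exactly why those lemmas were proved. I would therefore split into $0<m\le 1$, $1<m<2$ and $m\ge 2$ (the degenerate-versus-singular dichotomy), in each regime choosing the exponents in Young's inequality so that the resulting powers of $(\nep+\ep)$ match one of the available bounds; the bookkeeping of these exponents is the only delicate part, and once it is done the dualization against $\varphi\in W^{2,4}_0(\Omega)$ and integration in time are routine. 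This lemma, combined with Lemma \ref{tool1} (space) and the embeddings, sets up a Lions--Aubin compactness argument for $((\nep+\ep)^{m/2})_{\ep}$, which is the decisive input for the passage to the limit in Section \ref{Sec4}.
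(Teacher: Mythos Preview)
Your proposal is correct and follows essentially the same route as the paper: compute $\int_\Omega \partial_t(\nep+\ep)^{m/2}\varphi$ via the chain rule and the first equation in \eqref{Pe}, integrate the divergence terms by parts, and bound each resulting piece using Lemmas \ref{me}, \ref{tool1}, \ref{tori}, \ref{tool3} together with the embedding $W^{2,4}_0(\Omega)\hookrightarrow W^{1,\infty}(\Omega)$. Two minor simplifications relative to your sketch: the paper handles the convective term more directly by writing $(\nep+\ep)^{m/2-1}\uep\cdot\nabla\nep=\tfrac{2}{m}\,\uep\cdot\nabla(\nep+\ep)^{m/2}$ and applying Cauchy--Schwarz against $\lp{2}{\uep}$ and $\lp{2}{\nabla(\nep+\ep)^{m/2}}$ (no product rule on $\varphi$ needed there), and it avoids your explicit three-way case split by factoring each term so that the two halves of a Young inequality land exactly on quantities already bounded uniformly in $m$ --- for instance the diffusion pieces are paired as $\frac{|\nabla(\nep+\ep)^{(m+1)/2}|^2}{\nep}$ with $(\nep+\ep)^{2m-4}|\nabla\nep|^2$, and $(\nep+\ep)^{2m-3}|\nabla\nep|^2$ with $(\nep+\ep)^{m-1}$.
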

\begin{proof}
Let $T>0$ and let $\psi \in L^{\infty}(0, T; W^{2, 4}_0(\Omega))$. 
The first equation in \eqref{Pe} and integration by parts 
yield that 
\begin{align*}
\int_0^T\int_{\Omega} (\partial_t (n_{\ep}+\ep)^{\frac{m}{2}})\psi 
%\\ \notag
&= -\int_0^T\int_{\Omega}\psi u_{\ep}\cdot\nabla(n_{\ep}+\ep)^{\frac{m}{2}}
\\ %\notag
&\,\quad-\frac{m^2}{m+1}\left(\frac{m}{2}-1\right)
        \int_0^T\int_{\Omega} \psi
               \frac{\nabla (n_{\ep}+\ep)^{\frac{m+1}{2}}}{(n_{\ep}+\ep)^{\frac{1}{2}}}
                  \cdot (n_{\ep}+\ep)^{m-2}\nabla n_{\ep}
\\ %\notag
&\,\quad-\frac{m^2}{2}\int_0^T\int_{\Omega}
                         (n_{\ep}+\ep)^{m-\frac{3}{2}}\nabla n_{\ep}
                                          \cdot (n_{\ep}+\ep)^{\frac{m-1}{2}}\nabla \psi 
\\ %\notag 
&\,\quad+\frac{m\chi}{m+1}\left(\frac{m}{2}-1\right)
                 \int_0^T\int_{\Omega}\frac{n_{\ep}\psi}{(1+\ep n_{\ep})(n_{\ep}+\ep)}
                 \frac{\nabla (n_{\ep}+\ep)^{\frac{m+1}{2}}}{(n_{\ep}+\ep)^{\frac{1}{2}}}
                 \cdot \nabla c_{\ep}  
\\ %\notag
&\,\quad+\frac{m\chi}{2}\int_0^T\int_{\Omega}
                                                (n_{\ep}+\ep)^{\frac{m}{2}}
                                                \frac{n_{\ep}}{(1+\ep n_{\ep})(n_{\ep}+\ep)}
                                                \nabla c_{\ep}\cdot\nabla\psi 
\\ %\notag
&\,\quad+\frac{m\kappa}{2}\int_0^T\int_{\Omega}(n_{\ep}+\ep)^{\frac{m}{2}-1}
                                                                                             n_{\ep}\psi
     -\frac{m\mu}{2}\int_0^T\int_{\Omega}(n_{\ep}+\ep)^{\frac{m}{2}-1}
                                                                                           n_{\ep}^2\psi. 
\end{align*} 
Then, noting from 
the Young inequality that 
\begin{align*}
&\left| 
  \int_0^T\int_{\Omega}\psi u_{\ep}\cdot\nabla(n_{\ep}+\ep)^{\frac{m}{2}}
  \right| 
\le 
  \frac{\|\psi\|_{L^\infty(\Omega\times (0,T))}}{2}
  \left(
  \int_0^T\int_{\Omega} |u_{\ep}|^2 
  + \int_0^T\io |\nabla(n_{\ep}+\ep)^{\frac{m}{2}}|^2
  \right), 
\\
&
\left|
   \int_0^T\int_{\Omega} \psi
   \frac{\nabla (n_{\ep}+\ep)^{\frac{m+1}{2}}}{(n_{\ep}+\ep)^{\frac{1}{2}}}
   \cdot (n_{\ep}+\ep)^{m-2}\nabla n_{\ep}
\right|
\\
&\quad \quad \quad \le 
  \frac{\|\psi\|_{L^\infty(\Omega\times (0,T))}}{2}
  \int_0^T\int_{\Omega}  
  \left( \frac{|\nabla (n_{\ep}+\ep)^{\frac{m+1}{2}}|^2}{n_{\ep}}
  + 
  \iio 
   (n_{\ep}+\ep)^{2m-4}|\nabla n_{\ep}|^2
  \right) 
\end{align*}
and%\\
\begin{align*}
&
\left| 
\int_0^T\int_{\Omega}
                         (n_{\ep}+\ep)^{m-\frac{3}{2}}\nabla n_{\ep}
                                          \cdot (n_{\ep}+\ep)^{\frac{m-1}{2}}\nabla \psi 
\right|
\\
&\quad \quad \quad  
\le \frac{\|\nabla \psi\|_{L^\infty(\Omega\times (0,T))}}{2}
\left( 
 \iio  (n_{\ep}+\ep)^{2m-3}|\nabla n_{\ep}|^2
 + 
 \iio (n_{\ep}+\ep)^{m-1}
\right), 
\\ 
&\left| 
                 \int_0^T\int_{\Omega}\psi\frac{n_{\ep}}{(1+\ep n_{\ep})(n_{\ep}+\ep)}
                 \frac{\nabla (n_{\ep}+\ep)^{\frac{m+1}{2}}}{(n_{\ep}+\ep)^{\frac{1}{2}}}
                 \cdot \nabla c_{\ep}   
\right| 
\\
&\quad \quad \quad 
\le 
\frac{\|\psi\|_{L^\infty(\Omega\times (0,T))}}{2}
\left( 
\int_0^T\int_{\Omega}
                 \frac{|\nabla (n_{\ep}+\ep)^{\frac{m+1}{2}}|^2}{n_{\ep}}
+
 \iio |\nabla c_{\ep}|^2
\right) 
\end{align*}
as well as 
\begin{align*}
&\left| 
\int_0^T\int_{\Omega}
  (n_{\ep}+\ep)^{\frac{m}{2}}
  \frac{n_{\ep}}{(1+\ep n_{\ep})(n_{\ep}+\ep)}
 \nabla c_{\ep}\cdot\nabla\psi 
 \right| 
 \\ & \quad \quad \quad 
 \le 
 \frac{\|\nabla \psi\|_{L^\infty(\Omega\times (0,T))}}{2} 
 \left( 
 \int_0^T\int_{\Omega}
  (n_{\ep}+\ep)^{m} 
  + 
 \iio |\nabla c_{\ep}|^2
 \right) 
 \end{align*} 
 with 
 \begin{align*} 
&\left|
  \frac{m\kappa}{2}\int_0^T\int_{\Omega}(n_{\ep}+\ep)^{\frac{m}{2}-1} n_{\ep}\psi
  - \frac{m\mu}{2}\int_0^T\int_{\Omega}(n_{\ep}+\ep)^{\frac{m}{2}-1}
      n_{\ep}^2\psi 
 \right|
\\ & \quad \quad \quad  \quad 
 \le 
 \|\psi\|_{L^\infty(\Omega\times (0,T))} \left( \iio (\nep+\e)^{\max\{m,2\}} +C_1T\right) 
\end{align*}
for some $C_1>0$ (from the fact that $\frac{m}{2}+1 \le \max\{m,2\}$), 
we obtain from Lemmas \ref{me}, \ref{tool1}, \ref{tori} and \ref{tool3} 
together with 
the continuous embedding $W^{2,4}_0(\Omega) \hookrightarrow W^{1,\infty}(\Omega)$ that 
\begin{align*}
\int_0^T\int_{\Omega} (\partial_t (n_{\ep}+\ep)^{\frac{m}{2}})\psi 
\leq C_2\|\psi\|_{L^{\infty}(0, T; W^{2, 4}(\Omega))} 
\end{align*} 
with some $C_2=C_2(T)>0$. 
Therefore a standard duality argument enables us to see this lemma. 
\end{proof}

We also give the following lemma concerned with time regularities of $\cep$ and $\uep$. 

\begin{lem}\label{cu}
For all $T>0$ there exists $C=C(T)>0$ satisfying
$$
\|(c_{\ep})_t\|_{L^2(0, T; (W^{1, 2}_{0}(\Omega))^{*})} \le C \quad \mbox{and} \quad   
\|(u_{\ep})_ t\|_{L^2(0, T; (W^{1, 3}_{0}(\Omega))^{*})}
\leq C\quad \mbox{for all}\ \ep \in (0, 1).
$$
\end{lem}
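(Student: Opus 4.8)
The plan is to test the second and third equations of \eqref{Pe} against time-independent test functions $\psi\in W^{1,2}_0(\Omega)$ (resp.\ $\psi\in W^{1,3}_{0,\sigma}(\Omega)$) and to estimate each resulting term by quantities already controlled in Lemma~\ref{me}. For the $c_\ep$-equation, integration by parts gives
\[
  \int_\Omega (c_\ep)_t\,\psi
  = -\int_\Omega \nabla c_\ep\cdot\nabla\psi
    -\int_\Omega c_\ep u_\ep\cdot\nabla\psi
    -\int_\Omega c_\ep\,\tfrac1\ep\log(1+\ep n_\ep)\,\psi ,
\]
where we use $\nabla\cdot u_\ep=0$ to move the convective term onto $\psi$. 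The first term is bounded by $\|\nabla c_\ep\|_{L^2(\Omega)}\|\psi\|_{W^{1,2}_0(\Omega)}$; for the second we note $\|c_\ep\|_{L^\infty(\Omega)}\le\lpc$ by Lemma~\ref{lem;Linf;c} and bound it by $\lpc\|u_\ep\|_{L^2(\Omega)}\|\psi\|_{W^{1,2}_0(\Omega)}$; for the third we use $\tfrac1\ep\log(1+\ep n_\ep)\le n_\ep$ together with $\|c_\ep\|_{L^\infty(\Omega)}\le\lpc$ to bound it by $\lpc\|n_\ep\|_{L^2(\Omega)}|\Omega|^{1/2}\|\psi\|_{L^2(\Omega)}$. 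Squaring, integrating in $t\in(0,T)$, and invoking the bounds $\int_0^T\io|\nabla c_\ep|^2$, $\int_0^T\io|\nabla u_\ep|^2$ (hence $\int_0^T\io|u_\ep|^2$ via Poincar\'e) and $\int_0^T\io n_\ep^2$ from Lemma~\ref{me} yields $\|(c_\ep)_t\|_{L^2(0,T;(W^{1,2}_0(\Omega))^*)}\le C(T)$.

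For the $u_\ep$-equation, testing against divergence-free $\psi\in W^{1,3}_{0,\sigma}(\Omega)$ kills the pressure, and integration by parts gives
\[
  \int_\Omega (u_\ep)_t\cdot\psi
  = -\int_\Omega \nabla u_\ep\cdot\nabla\psi
    + \int_\Omega (Y_\ep u_\ep\otimes u_\ep)\cdot\nabla\psi
    + \int_\Omega n_\ep\nabla\Phi\cdot\psi .
\]
The diffusion term is bounded by $\|\nabla u_\ep\|_{L^2(\Omega)}\|\psi\|_{W^{1,2}_0(\Omega)}$; the forcing term by $\|\nabla\Phi\|_{L^\infty(\Omega)}\|n_\ep\|_{L^2(\Omega)}\|\psi\|_{L^2(\Omega)}$, using $\Phi\in C^{1+\beta}(\overline\Omega)$; both are controlled after squaring and integrating by Lemma~\ref{me} as above. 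The convective term is the only delicate one: using the nonexpansivity $\|Y_\ep u_\ep\|_{L^2(\Omega)}\le\|u_\ep\|_{L^2(\Omega)}$ and H\"older with exponents $(6,6,3)$, one estimates $|\int_\Omega (Y_\ep u_\ep\otimes u_\ep)\cdot\nabla\psi|\le\|Y_\ep u_\ep\|_{L^6}\|u_\ep\|_{L^6}\|\nabla\psi\|_{L^{3/2}}$, and since $W^{1,3}_0(\Omega)\hookrightarrow L^\infty(\Omega)$ in three dimensions we may instead pair as $\|Y_\ep u_\ep\|_{L^2}\|u_\ep\|_{L^2}\|\nabla\psi\|_{L^\infty}$ after Sobolev embedding $W^{1,3}_0\hookrightarrow W^{1,\infty}$ is used on $\psi$; the cleanest route is $|\int_\Omega(Y_\ep u_\ep\otimes u_\ep)\cdot\nabla\psi|\le\|u_\ep\|_{L^2(\Omega)}\|u_\ep\|_{L^6(\Omega)}\|\nabla\psi\|_{L^3(\Omega)}$, and then by the Gagliardo--Nirenberg inequality $\|u_\ep\|_{L^6(\Omega)}\le C\|\nabla u_\ep\|_{L^2(\Omega)}$, so this term is bounded by $C\|u_\ep\|_{L^2(\Omega)}\|\nabla u_\ep\|_{L^2(\Omega)}\|\psi\|_{W^{1,3}_0(\Omega)}$. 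Squaring gives $C\|u_\ep\|_{L^2(\Omega)}^2\|\nabla u_\ep\|_{L^2(\Omega)}^2$; integrating in time and using that $\|u_\ep(\cdot,t)\|_{L^2(\Omega)}^2$ is bounded uniformly (from $\mathcal F_\ep(t)\le C$ in Lemma~\ref{me}, provided $\chi>0$; when $\chi=0$ a separate but elementary energy bound for $u_\ep$ in $L^\infty(0,T;L^2)$ from testing the velocity equation with $u_\ep$ and Lemma~\ref{tra} applies) together with $\int_0^T\io|\nabla u_\ep|^2\le C$ from Lemma~\ref{me} closes the estimate.

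The main obstacle is precisely the convective term $(Y_\ep u_\ep\cdot\nabla)u_\ep$ in the Navier--Stokes equation: its natural dual norm lives in a worse space than for the Stokes case, which is exactly why the target space is $(W^{1,3}_0(\Omega))^*$ rather than $(W^{1,2}_0(\Omega))^*$, and why one must combine the uniform $L^\infty(0,T;L^2(\Omega))$-bound on $u_\ep$ with the $L^2(0,T;W^{1,2}(\Omega))$-bound and a three-dimensional Sobolev embedding to absorb the cubic structure. Once these bounds are in hand, a standard duality argument (identifying $(c_\ep)_t$ and $(u_\ep)_t$ as elements of the respective dual spaces via the Lebesgue--Bochner pairing) concludes the proof.
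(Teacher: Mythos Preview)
Your approach is essentially the one the paper has in mind: the paper's own proof simply refers to \cite[Lemmas~2.14 and~2.15]{Lankeit_2016}, which are exactly these testing arguments against $\psi\in W^{1,2}_0(\Omega)$ and $\psi\in W^{1,3}_{0,\sigma}(\Omega)$ combined with the a~priori bounds of Lemma~\ref{me}. Your treatment of $(c_\ep)_t$ and the final H\"older-$(2,6,3)$ estimate
\[
  \Bigl|\int_\Omega (Y_\ep u_\ep\otimes u_\ep)\cdot\nabla\psi\Bigr|
  \le \|Y_\ep u_\ep\|_{L^2(\Omega)}\,\|u_\ep\|_{L^6(\Omega)}\,\|\nabla\psi\|_{L^3(\Omega)}
  \le C\|u_\ep\|_{L^2(\Omega)}\|\nabla u_\ep\|_{L^2(\Omega)}\|\psi\|_{W^{1,3}_0(\Omega)}
\]
is correct and is the standard route.

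One correction, however: you should delete the side remarks claiming $W^{1,3}_0(\Omega)\hookrightarrow L^\infty(\Omega)$ and $W^{1,3}_0(\Omega)\hookrightarrow W^{1,\infty}(\Omega)$ in three dimensions. Both are false: $p=3=N$ is precisely the critical Sobolev exponent, so $W^{1,3}(\Omega)$ embeds into every $L^q(\Omega)$ with $q<\infty$ but \emph{not} into $L^\infty(\Omega)$, and certainly not into $W^{1,\infty}(\Omega)$. Fortunately you do not use these claims; your ``cleanest route'' via H\"older with exponents $(2,6,3)$ avoids them entirely and is all that is needed. Just remove the incorrect embedding statements and the argument is clean.
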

\begin{proof}
This lemma can be proved from the same arguments as those 
in the proofs of \cite[Lemmas 2.14 and 2.15]{Lankeit_2016}.
\end{proof}

Finally we give an estimate for $\nabla (\nep+\e)^m$ to see   
convergence of $\iio \nabla (\nep+\e)^m \cdot \nabla \varphi$ 
for all $\varphi\in C^\infty_0(\overline{\Omega}\times [0,\infty))$. 

%==========================Lem.3.8.==============================%
\begin{lem}\label{inu1}
Let $m>0$ be such that $m\neq 1$. 
Then for all $T>0$ and all $r\in(1, \frac 43]$ 
there exists a constant $C=C(T)>0$ such that  
\begin{align*}
\|\nabla (n_{\ep}+\ep)^m\|_{L^r(0, T; L^r(\Omega))} \leq C 
\quad \mbox{for all}\ \ep \in (0, 1). 
\end{align*} 
\end{lem}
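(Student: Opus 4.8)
The plan is to write $\nabla (n_{\ep}+\ep)^m$ as a product and apply Hölder's inequality, interpolating between the gradient estimates already at our disposal. Writing
\[
  \nabla (n_{\ep}+\ep)^m
  = \frac{2m}{m+1}\,(n_{\ep}+\ep)^{\frac{m-1}{2}}\,\nabla (n_{\ep}+\ep)^{\frac{m+1}{2}},
\]
we want an $L^r$ bound in space-time for $r\in(1,\tfrac43]$. The factor $\nabla (n_{\ep}+\ep)^{\frac{m+1}{2}}$ is controlled in $L^{4/3}_{\rm loc}([0,\infty);L^{4/3}(\Omega))$ by Lemma \ref{me} (via the bound on $\iio |\nabla (n_{\ep}+\ep)^{\frac{m+1}{2}}|^{4/3}$). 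So the remaining task is to absorb the factor $(n_{\ep}+\ep)^{\frac{m-1}{2}}$: I would apply Hölder with exponents $\frac{4}{3r}$ and its conjugate to split off the gradient factor, reducing the claim to a uniform-in-$\ep$ bound of the form $\iio (n_{\ep}+\ep)^{s}\le C(T)$ for the relevant power $s = \frac{(m-1)}{2}\cdot\frac{4r}{4-3r}$.

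The key point is that these low-order integral bounds on $n_{\ep}$ (equivalently on $(n_{\ep}+\ep)$, since $\ep<1$) are exactly what Lemmas \ref{me}, \ref{tori} and \ref{tool3} provide: Lemma \ref{me} gives $\iio n_{\ep}^2\le C(T)$ (and $\iio n_{\ep}^2\log n_{\ep}\le C(T)$), Lemma \ref{tori} gives $\io (n_{\ep}+\ep)^m\le C(T)$ on $(0,T)$, and Lemma \ref{tool3} gives $\io (n_{\ep}+\ep)^{m-1}\le C(T)$; together with Lemma \ref{pote1} ($\io n_{\ep}\le C$) these cover all the exponents $s$ that arise for $r\le\frac43$, possibly after splitting into the cases $m>1$ and $0<m<1$ (and dealing with $m\ge2$ versus $1<m<2$ separately, in parallel with the case distinction already made in Lemmas \ref{tori} and \ref{tool3}). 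For $r=\frac43$ one checks that $s$ collapses to a value $\le\max\{2,m\}$, which is covered; for $r<\frac43$ the required power is even smaller (relative to the available exponents, by interpolation with the $L^1$ bound on $n_{\ep}$), so the case $r=\frac43$ is the binding one and it suffices to treat that.

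The main obstacle I anticipate is bookkeeping rather than anything deep: the power $\frac{m-1}{2}$ can be negative (when $0<m<1$) or large (when $m$ is large), so one must be careful that the Hölder exponent chosen actually makes the integrand $(n_{\ep}+\ep)^{s}$ fall into the range covered by Lemmas \ref{tori}, \ref{tool3}, and \ref{me}, using monotonicity of $t\mapsto t^{s}$ on $[\ep,\infty)$ together with the trivial bound $n_{\ep}+\ep\ge\ep$ only where it helps — better, using $(n_{\ep}+\ep)^{s}\le (n_{\ep}+1)^{s}$ and then a Young-type splitting $(n_{\ep}+1)^{s}\le C(1+n_{\ep}^{\sigma})$ with $\sigma$ among $\{1,2,m,m-1\}$, so that the uniform-in-$\ep$ bounds apply. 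Once the exponent arithmetic is arranged, the estimate follows immediately from Hölder and the cited lemmas, with $C=C(T)$ independent of $\ep\in(0,1)$.
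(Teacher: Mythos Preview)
Your H\"older splitting breaks down exactly at the critical exponent $r=\tfrac43$. With the factorization $\nabla(n_\ep+\ep)^m = \tfrac{2m}{m+1}(n_\ep+\ep)^{\frac{m-1}{2}}\nabla(n_\ep+\ep)^{\frac{m+1}{2}}$ and the exponent $\tfrac{4}{3r}$ on the gradient factor, the conjugate exponent is $\tfrac{4}{4-3r}$, which tends to $\infty$ as $r\nearrow\tfrac43$. Hence the power $s=\tfrac{2(m-1)r}{4-3r}$ does \emph{not} ``collapse to a value $\le\max\{2,m\}$'' --- it diverges (to $+\infty$ when $m>1$, to $-\infty$ when $m<1$). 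At $r=\tfrac43$ you would need $(n_\ep+\ep)^{\frac{m-1}{2}}\in L^\infty(\Omega\times(0,T))$ uniformly in $\ep$, and this fails in both regimes: for $m>1$ because $n_\ep$ is not uniformly bounded, and for $m<1$ because the bound degenerates like $\ep^{\frac{m-1}{2}}\to\infty$. The $L^{4/3}$ bound on $\nabla(n_\ep+\ep)^{\frac{m+1}{2}}$ from Lemma~\ref{me} is simply too weak to close the estimate at $r=\tfrac43$.

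The paper's route avoids this by using a different gradient estimate: Lemma~\ref{tori} provides $\iio(n_\ep+\ep)^{2m-3}|\nabla n_\ep|^2\le C(T)$. Writing $|\nabla(n_\ep+\ep)^m|^{4/3}=m^{4/3}\big[(n_\ep+\ep)^{2m-3}|\nabla n_\ep|^2\big]^{2/3}\cdot(n_\ep+\ep)^{2/3}$ and applying Young's inequality with exponents $\tfrac32$ and $3$ gives
\[
\iio|\nabla(n_\ep+\ep)^m|^{4/3}\le C_1\iio(n_\ep+\ep)^{2m-3}|\nabla n_\ep|^2 + \iio(n_\ep+\ep)^2,
\]
and both terms are bounded by Lemmas~\ref{tori} and~\ref{me}. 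The point is that the exponent $2m-3$ in the gradient weight is tailored so that the residual power on $n_\ep+\ep$ is exactly $2$, independently of $m$ --- this is what makes the argument go through for all $m>0$, whereas your decomposition leaves a residual power that depends unfavourably on $m$.
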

\begin{proof}
Let $r=\frac 43$. 
Since $\frac{r}{2-r}= 2$, the Young inequality yields 
\begin{align*}
\int_{0}^{T}\int_{\Omega}|\nabla (n_{\ep}+\ep)^m|^r 
&= \int_{0}^{T}\int_{\Omega}(n_{\ep}+\e)^{(m-1)r}|\nabla n_{\ep}|^r \\
&\leq \int_{0}^{T}\int_{\Omega}(n_{\ep}+\e)^{\frac{r}{2-r}} 
         + C_1\int_{0}^{T}\int_{\Omega}(n_{\ep}+\e)^{2m-3}|\nabla n_{\ep}|^2 \\
&\le  \int_{0}^{T}\int_{\Omega}(n_{\ep}+1)^{2} 
         + C_1\int_{0}^{T}\int_{\Omega}(n_{\ep}+\e)^{2m-3}|\nabla n_{\ep}|^2
\end{align*}
with some $C_1>0$. 
Therefore 
Lemmas \ref{me} and \ref{tori} lead to this lemma.  
\end{proof}

%==========================Lem.3.11.==============================%

\section{Convergence: Proof of Theorem \ref{mainthm1}}\label{Sec4}

In this section we consider convergence of solutions of approximate problem \eqref{Pe} 
and then prove Theorem \ref{mainthm1}. 
We first state the following result 
which can be obtained from the previous estimates in Section \ref{Sec3}. 

%==========================Lem.3.12.==============================%
\begin{lem}\label{convergences}
There exist a sequence 
$(\e_j)_{j\in\mathbb{N}}$ such that $\e_j\searrow 0$ 
as $j\to\infty$ and functions $n, c, u$ such that
  \begin{align*}
        n&\in L^2_{\rm loc}([0,\infty);L^2(\om)), 
         \\
        c&\in L^2_{\rm loc}([0,\infty);W^{1,2}(\om)),
         \\
        u&\in L^2_{\rm loc}([0,\infty);W^{1,2}_{0,\sigma}(\om))
  \end{align*}
and that for all $p\in [1,6)$, 
  \begin{align} 
  \label{conv;nm2}
    (\nep+\e)^\frac{m}{2} &\to n^\frac{m}{2} 
    && \mbox{in }L^2_{\rm loc}([0,\infty);L^p(\Omega))  
    \ \mbox{and a.e.} \ \mbox{in} \  \Omega\times (0,\infty) ,   
    \\
    \label{conv;n}
    n_{\ep}&\to n
    &&\mbox{in }L^2_{\rm loc}([0,\infty);L^2(\om)),
  \\
    \label{conv;c}
    c_\e&\to c
    &&\mbox{in }C^0_{\rm loc}([0,\infty);L^p(\om)), 
       %\mbox{a.e.\ in }\om\times(0,\infty),
  \\
    \label{conv;u}
    u_\e&\to u
    &&\mbox{in }L^2_{\rm loc}([0,\infty);L^p(\om)), 
%\mbox{a.e.\ in }\om\times(0,\infty),
%    \notag
%    c_\e&\to c
%    &&\mbox{weakly$^{*}$ in }L^\infty(\om\times(t,t+1))
%    \quad
%    \mbox{for all }t\geq0,
%  \\
 %\\
   % \notag
   % \na (n_{\ep}+1)^{m}&\to\na (n+1)^{m}
   % &&\mbox{weakly in }L^q_{\rm loc}([0,\infty);L^q(\om)) 
   % \
   % \mbox{for all }q\in[1,\frac{2m}{m+1}),
  \\ \label{conv;nac}
    \na c_\e&\to\na c
    &&\mbox{weakly in} \ L^4_{\rm loc}([0,\infty);L^4(\om)),
  \\ \label{conv;nau}
    \na u_\e&\to\na u
    &&\mbox{weakly in} \ L^2_{\rm loc}([0,\infty);L^2(\om)),
  \\ \label{conv;Yu}
    Y_\e u_\e&\to u
    &&\mbox{in }L^2_{\rm loc}([0,\infty);L^2(\om))
  \end{align}
as $\e=\e_j\searrow0$. 
\end{lem}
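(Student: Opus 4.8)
The plan is to extract convergent subsequences by combining the uniform bounds collected in Section \ref{Sec3} with a Lions--Aubin (Aubin--Lions) compactness argument, then identify the limits and upgrade weak convergence to strong convergence where possible. First I would fix an arbitrary $T>0$ and work on $(0,T)$, obtaining the global statement at the end by a diagonal extraction over $T=1,2,3,\dots$. From Lemma \ref{tool1} the family $((\nep+\e)^{m/2})_{\e\in(0,1)}$ is bounded in $L^2(0,T;W^{1,2}(\Omega))$, and Lemma \ref{tool4} gives a bound for $\pa_t(\nep+\e)^{m/2}$ in $L^1(0,T;(W^{2,4}_0(\Omega))^*)$. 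Since $W^{1,2}(\Omega)\hookrightarrow\hookrightarrow L^p(\Omega)$ compactly for every $p\in[1,6)$ and $L^p(\Omega)\hookrightarrow (W^{2,4}_0(\Omega))^*$ continuously, the Aubin--Lions--Simon lemma yields a subsequence along which $(\nep+\e)^{m/2}$ converges strongly in $L^2(0,T;L^p(\Omega))$ and, after a further extraction, almost everywhere in $\Omega\times(0,T)$; calling the limit $g$, this gives \eqref{conv;nm2} with $n:=g^{2/m}$, and the a.e. convergence $(\nep+\e)^{m/2}\to n^{m/2}$ forces $\nep\to n$ a.e. as well.

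Next I would promote the a.e. convergence of $\nep$ to the $L^2_{\rm loc}$ convergence \eqref{conv;n}: the bound $\iio\nep^2\le C$ from Lemma \ref{me} provides uniform integrability (equi-integrability of $(\nep^2)$ follows from the $\iio\nep^2\log\nep$ bound via a de la Vallée-Poussin argument), so Vitali's convergence theorem upgrades a.e. convergence to strong $L^2(0,T;L^2(\Omega))$ convergence; in particular $n\in L^2_{\rm loc}([0,\infty);L^2(\Omega))$. For $c$ and $u$ I would apply Aubin--Lions again: Lemma \ref{lem;Linf;c} and Lemma \ref{me} bound $\cep$ in $L^2(0,T;W^{1,2}(\Omega))$ (indeed in $L^\infty\cap L^4(0,T;W^{1,4})$), while Lemma \ref{cu} bounds $(\cep)_t$ in $L^2(0,T;(W^{1,2}_0(\Omega))^*)$, which even gives $\cep\in C^0([0,T];(W^{1,2}_0)^*)$; combined with the spatial bound and the compact embedding $W^{1,2}\hookrightarrow\hookrightarrow L^p$, a standard interpolation argument yields the stronger $C^0_{\rm loc}([0,\infty);L^p(\Omega))$ convergence \eqref{conv;c}. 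Similarly, Lemma \ref{me} bounds $\uep$ in $L^2(0,T;W^{1,2}_{0,\sigma}(\Omega))$ and Lemma \ref{cu} bounds $(\uep)_t$ in $L^2(0,T;(W^{1,3}_0(\Omega))^*)$, so Aubin--Lions gives \eqref{conv;u}. The weak convergences \eqref{conv;nac} and \eqref{conv;nau} are immediate from the uniform bounds $\iio|\nabla\cep|^4\le C$ and $\iio|\nabla\uep|^2\le C$ in Lemma \ref{me} (reflexivity of $L^4$ and $L^2$, after a further subsequence), and the limits are identified as $\nabla c$ and $\nabla u$ by testing against smooth functions and using \eqref{conv;c} and \eqref{conv;u}. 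Finally, \eqref{conv;Yu} follows because $Y_\e=(1+\e A)^{-1}\to\mathrm{id}$ strongly on $L^2_\sigma(\Omega)$ as $\e\searrow0$ with $\|Y_\e\|\le1$: writing $Y_\e\uep-u=Y_\e(\uep-u)+(Y_\e u-u)$, the first term is controlled in $L^2(0,T;L^2(\Omega))$ by $\|\uep-u\|_{L^2(0,T;L^2(\Omega))}\to0$ via \eqref{conv;u}, and the second tends to $0$ by dominated convergence in $t$.

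The main obstacle is the identification of the limit of the nonlinear diffusion term, namely ensuring that the limit object extracted from $((\nep+\e)^{m/2})$ is genuinely a power of a single function $n$ and that all the other $\nep$-dependent quantities (e.g. $\nabla(\nep+\e)^m$, bounded in $L^{4/3}$ by Lemma \ref{inu1}) converge to the corresponding expressions in $n$; this is exactly where the a.e. convergence delivered by Aubin--Lions is essential, since it lets us pass to the limit inside the nonlinearities via Vitali/dominated convergence rather than relying on weak convergence alone. A secondary technical point is the diagonal argument producing a single sequence $\e_j\searrow0$ that works simultaneously for all the listed convergences and all $T$; this is routine but must be stated carefully so that the later limit passage in the weak formulation (Definition \ref{def;weaksol}) is justified on every compact time interval.
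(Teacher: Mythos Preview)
Your proposal is correct and follows essentially the same route as the paper: Aubin--Lions--Simon applied to $(n_\e+\e)^{m/2}$ via Lemmas \ref{tool1} and \ref{tool4} (the paper also invokes Lemma \ref{tori} to supply the $L^2(0,T;L^2)$ bound on $(n_\e+\e)^{m/2}$ itself, which you implicitly use), then Aubin--Lions for $c_\e$ and $u_\e$ via Lemmas \ref{me} and \ref{cu}, and reflexivity for the weak gradient convergences. The only cosmetic difference is in \eqref{conv;n}: you use de la Vall\'ee-Poussin plus Vitali directly, whereas the paper uses Dunford--Pettis to get $\int_0^T\!\!\int_\Omega n_\e^2\to\int_0^T\!\!\int_\Omega n^2$ and then combines weak $L^2$ convergence with norm convergence; these are equivalent packagings of the same uniform-integrability information from the $\int n_\e^2\log n_\e$ bound.
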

\begin{proof}
Let $T>0$. 
Thanks to Lemmas \ref{tool1}, \ref{tori} and \ref{tool4}, 
we have that 
\[
  \left( (\nep+\e)^\frac{m}{2} \right)_{\e\in (0,1)} \ \mbox{is bounded in} \ 
  L^2(0,T; W^{1,2}(\Omega)) 
\]
and 
\[
  \left( \pa_t (\nep+\e)^\frac{m}{2} \right)_{\e\in (0,1)} \ \mbox{is bounded in} \ 
  L^1(0,T;(W^{2,4}_0(\Omega)^\ast). 
\]
Therefore, aided by the compact embedding $W^{1,2}(\Omega) \hookrightarrow L^p(\Omega)$ 
for all $p\in [1,6)$ and the continuous embedding 
$L^p(\Omega) \hookrightarrow (W^{2,4}_0(\Omega))^\ast$,  
we can see from a Lions--Aubin type lemma (see \cite[Corollary 4]{Simon_1987}) that 
$((\nep+\e)^\frac{m}{2})_{\e\in (0,1)}$ is relatively compact in 
$L^2 (0,T; L^p(\Omega))$,  
which means that there are a sequence  
$(\e_j)_{j\in \mathbb{N}} \searrow 0$ and a function 
$v \in L^2(0,T;L^p(\Omega))$ such that 
$(\nep+ \e)^\frac{m}{2} \to v$ in $L^2(0,T;L^p(\Omega))$  
as $\e=\e_j\searrow0$.  
Then by putting $n := v^{\frac{2}{m}}$ we have \eqref{conv;nm2}, 
which yields that $\nep \to n$ a.e.\ in $\Omega\times (0,\infty)$ 
as $\e=\e_j\searrow0$.   
The rest of the proof is mainly based on arguments in the proof of 
\cite[Proposition 2.1]{Lankeit_2016}; thus we will give a short proof. 
Since a uniform bound on $\iio \Phi(\nep^2)$, 
where $\Phi(s):=\frac{s}{2}\log (s)$ for $s>0$, 
derives from the Dunford--Pettis theorem (cf.\ \cite[Lemma IV 8.9]{Dunford-Schwartz})
that $(\nep^2)_{\e\in (0,1)}$ is weakly relatively precompact in 
$L^1(\Omega\times (0,T))$, 
we obtain that there is a subsequence of $(\e_j)_{j\in \mathbb{N}}$ such that 
$\iio n_{\ep}^2 \to \iio n^2$ 
as $\e=\e_j\searrow0$.   
This together with the convergence $\nep \to n$ weakly 
in $L^2(0,T;L^2(\Omega))$ 
as $\e=\e_j\searrow0$   
(from Lemma \ref{me}) yields that \eqref{conv;n} along a further subsequence. 
On the other hand, 
by virtue of Lemmas \ref{lem;Linf;c}, \ref{me} and \ref{cu},  
we can establish that 
$(\cep)_{\e\in (0,1)}$ and $((\cep)_t)_{\e\in (0,1)}$ 
are bounded in $L^\infty (0,T; W^{1,2}(\Omega))$ and 
in $L^2(0,T;(W^{1,2}_0(\Omega))^\ast)$, 
respectively, 
as well as 
$(\uep)_{\e\in (0,1)}$ and 
$((\uep)_t)_{\e\in (0,1)}$  
are bounded in $L^2 (0,T; W^{1,2}_\sigma)$ 
and in $L^2(0,T; (W^{1,3}_\sigma(\Omega))^\ast)$, 
respectively. Thus  
\cite[Corollary 4]{Simon_1987} again implies 
\eqref{conv;c} and \eqref{conv;u}, 
and then Lemma \ref{me} leads to the convergences  
\eqref{conv;nac} and \eqref{conv;nau} 
along a further subsequence.  
Finally, noticing that 
$\lp{2}{Y_\e \uep\cd - u\cd} \to 0$ as $\ep=\ep_j\searrow0$ a.e.\ $t>0$  
and $\lp{2}{Y_\e \uep\cd -u\cd}^2 \le C$ for all $t>0$ and all $\ep>0$ 
in view of Lemma \ref{me}, 
we can establish from the dominated convergence theorem that 
\eqref{conv;Yu} along a further subsequence. 
\end{proof}

We then provide convergence of $\nabla (\nep + \e)^m$ from Lemma \ref{inu1}. 

%==========================Lem.3.13.==============================%
\begin{lem}\label{nablaconv}
Let $m>0$ be such that $m \neq 1$. 
Then the function $n$ obtained in Lemma \ref{convergences} 
satisfies that 
$n^m \in L^{\frac{4}{3}}_{\rm loc}([0, \infty); W^{1, \frac{4}{3}}(\Omega))$ and 
\[
  \nabla(n_{\ep}+\e)^m \to \nabla n^m \quad \mbox{weakly in}\ 
  L^\frac{4}{3}_{\rm loc}([0, \infty); L^\frac{4}{3}(\Omega))
\] 
as $\e=\e_j\searrow0$. 
\end{lem}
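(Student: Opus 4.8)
The plan is to combine the uniform $L^{4/3}$-bound on $\nabla(\nep+\e)^m$ from Lemma \ref{inu1} with the a.e.\ convergence $\nep \to n$ already established in Lemma \ref{convergences}, and to identify the weak limit of $\nabla(\nep+\e)^m$ as the distributional gradient $\nabla n^m$. First I would fix $T>0$ and apply Lemma \ref{inu1} with $r=\frac43$: the sequence $\bigl(\nabla(\nep+\e)^m\bigr)_{\e\in(0,1)}$ is bounded in $L^{4/3}(0,T;L^{4/3}(\Omega))$, so by reflexivity of $L^{4/3}$ there is a subsequence along which $\nabla(\nep+\e)^m \to g$ weakly in $L^{4/3}(0,T;L^{4/3}(\Omega))$ for some limit function $g$. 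Since $T$ is arbitrary, a diagonal extraction over $T=1,2,\dots$ produces a single subsequence (still denoted $\e=\e_j\searrow0$, after passing to a further subsequence of the one from Lemma \ref{convergences}) along which this weak convergence holds in $L^{4/3}_{\rm loc}([0,\infty);L^{4/3}(\Omega))$.

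Next I would identify $g = \nabla n^m$. From \eqref{conv;nm2} we have $(\nep+\e)^{m/2}\to n^{m/2}$ a.e.\ in $\Omega\times(0,\infty)$, hence $\nep\to n$ a.e., and therefore $(\nep+\e)^m \to n^m$ a.e.\ in $\Omega\times(0,\infty)$. Moreover $(\nep+\e)^m$ is bounded in $L^1_{\rm loc}$ — indeed $\io(\nep+\e)^m \le C(T)$ on $(0,T)$ by Lemma \ref{tori} when $m\neq1$, and in fact the $L^{4/3}(0,T;W^{1,4/3})$-bound together with the $L^1$-bound on $(\nep+\e)^m$ gives, via the continuous embedding and interpolation, an $L^{4/3}(0,T;L^{4/3}(\Omega))$-bound on $(\nep+\e)^m$ itself (using also that $(\nep+\e)^m$ is controlled in $L^1$ and $W^{1,4/3}\hookrightarrow L^{4/3}$). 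Combining the a.e.\ convergence with this equi-integrability (Vitali's theorem), $(\nep+\e)^m \to n^m$ strongly in $L^1(\Omega\times(0,T))$, in particular in $\mathcal{D}'(\Omega\times(0,T))$. Passing to the limit in the identity $\int (\nep+\e)^m\,\partial_{x_i}\varphi = -\int \partial_{x_i}(\nep+\e)^m\,\varphi$ for each test function $\varphi\in C^\infty_0(\Omega\times(0,T))$ — the left side converges by the $L^1$-convergence, the right side by the weak $L^{4/3}$-convergence — shows that $g_i = \partial_{x_i} n^m$ in the distributional sense. This proves $n^m$ has a weak gradient $g\in L^{4/3}_{\rm loc}([0,\infty);L^{4/3}(\Omega))$, i.e.\ $n^m\in L^{4/3}_{\rm loc}([0,\infty);W^{1,4/3}(\Omega))$, and that $\nabla(\nep+\e)^m\to\nabla n^m$ weakly in $L^{4/3}_{\rm loc}([0,\infty);L^{4/3}(\Omega))$.

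The only delicate point is the passage from weak-$L^{4/3}$ convergence of the gradients plus a.e.\ convergence of the functions to the identification of the limit; this is standard but requires the strong $L^1_{\rm loc}$ (or even $L^{4/3}_{\rm loc}$) convergence of $(\nep+\e)^m$ to $n^m$, which in turn rests on equi-integrability. I expect the main obstacle — really a bookkeeping matter rather than a genuine difficulty — to be verifying that $(\nep+\e)^m$ is equi-integrable on $\Omega\times(0,T)$: for $m\le 2$ this is immediate from Lemma \ref{me} (which bounds $\nep$ in $L^2(\Omega\times(0,T))$), and for $m>2$ one invokes the $L^{4/3}(0,T;W^{1,4/3}(\Omega))$-bound together with the $L^1$-bound $\io(\nep+\e)^m\le C$ from Lemma \ref{tori} and a Gagliardo--Nirenberg/Sobolev interpolation to upgrade to an $L^q(\Omega\times(0,T))$-bound for some $q>1$, which suffices for equi-integrability. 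Once that is in hand, the limit identification and the conclusion $n^m\in L^{4/3}_{\rm loc}([0,\infty);W^{1,4/3}(\Omega))$ follow routinely.
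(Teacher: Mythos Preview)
Your proposal is correct and follows essentially the same route as the paper: extract a weak $L^{4/3}$-limit of $\nabla(\nep+\e)^m$ from Lemma \ref{inu1}, then identify it with $\nabla n^m$ by showing $(\nep+\e)^m\to n^m$ strongly in $L^1(\Omega\times(0,T))$ via Vitali and the a.e.\ convergence from Lemma \ref{convergences}. The only streamlining in the paper is that equi-integrability of $(\nep+\e)^m$ is obtained in one stroke for all $m\neq 1$ by the Poincar\'e--Wirtinger inequality $\lp{4/3}{(\nep+\e)^m}\le C\bigl(\lp{4/3}{\nabla(\nep+\e)^m}+\lp{1}{(\nep+\e)^m}\bigr)$ together with Lemmas \ref{tori} and \ref{inu1}, which yields a uniform $L^{4/3}(\Omega\times(0,T))$-bound directly and makes your case split $m\le2$ versus $m>2$ unnecessary.
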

\begin{proof}
Let $T>0$. 
Since the Poincar\'e--Wirtinger inequality 
yields that 
\[
\int_0^T \lp{\frac{4}{3}}{(\nep + \ep)^m}^\frac{4}{3} 
   \le C_1 \int_0^T \lp{\frac{4}{3}}{\nabla (\nep+\ep)^m}^\frac{4}{3} 
      + C_1 \int_0^T |\Omega|^{-\frac{1}{4}} \lp{1}{(\nep + \ep)^m} 
\]
with some $C_1>0$, 
we first note from the Fatou lemma and Lemmas \ref{tori}, \ref{inu1} that  
\begin{align*}
  \int_0^T \lp{\frac{4}{3}}{n^m}^\frac{4}{3} 
  & \le \liminf_{\e \searrow 0} \int_0^T \lp{\frac{4}{3}}{(\nep + \ep)^m}^\frac{4}{3} 
  \le C_2
\end{align*}
with some $C_2=C_2(T)>0$, 
which implies that $n^m \in L^\frac{4}{3}(0,T; L^\frac{4}{3}(\Omega))$. 
We next have from Lemma \ref{inu1} that 
there exist a subsequence of $(\e_j)_{j\in \mathbb{N}}$ obtained in Lemma \ref{convergences}  
(again denoted by $(\ep_j)_{j\in \mathbb{N}}$)
and a function $w\in L^\frac{4}{3}(0,T; L^\frac{4}{3}(\Omega))$ such that 
\[
  \nabla (\nep + \ep)^m \to w 
\quad \mbox{weakly in} \ L^\frac{4}{3}(0,T; L^\frac{4}{3}(\Omega)) 
\] 
as $\ep = \ep_j \searrow 0$.  
In order to verify that $w=\nabla n^m$, 
it is enough to confirm that $(\nep + \ep)^m \to n^m$ 
in $L^1(0,T;L^1(\Omega))$  
as $\ep = \ep_j \searrow 0$.   
Now, since we have already known that 
$(\nep+\e)^m$ is uniform integrable 
(from Lemma \ref{tori}) 
and $(\nep+\e)^m \to n^m$ a.e. in $\Omega\times (0,\infty)$ 
as $\ep = \ep_j \searrow 0$,  
the Vitali convergence theorem entails that 
\[
  (\nep + \ep)^m \to n^m\quad  \mbox{in} \ L^1(0,T;L^1(\Omega))
\]
as $\ep = \e_j \searrow 0$. 
Thanks to this strong convergence, 
we can verify that $w=\nabla n^m$, which together with 
$w\in L^\frac{4}{3}(0,T; L^\frac{4}{3}(\Omega))$ 
shows that   
$n^m \in L^\frac{4}{3}(0,T; W^{1,\frac{4}{3}}(\Omega))$.  
\end{proof}

We will establish global existence of weak solutions to \eqref{P} from convergences 
obtained in Lemmas \ref{convergences} and \ref{nablaconv}. 
%====================Proof of Thm.1.1.==============================% 

\begin{prth1.1}
Let $\vp\in C^{\infty}_0(\overline{\Omega}\times [0,\infty))$ and 
$\psi\in C^{\infty}_{0,\sigma}(\Omega \times [0,\infty))$. 
Testing each equations in \eqref{Pe} by these functions 
and using integration by parts, we can see that 
  \begin{align}\label{weaksolPep}
%------------------------  \n1  ----------------------------
    &-\int^\infty_0\!\!\!\!\int_\Omega \nep \vp_t
     -\int_\Omega n_0\vp(\cdot,0)
     -\int^\infty_0\!\!\!\!\int_\Omega \nep \uep \cdot\na\vp
   \\ \notag
    &\h{5.8mm}=-\int^\infty_0\!\!\!\!\int_\Omega\na (\nep+ \e)^{m} \cdot\na\vp
      +\chi\int^\infty_0\!\!\!\!\int_\Omega \frac{\nep}{1+\e \nep} \na \cep \cdot\na\vp
      +\int^\infty_0\!\!\!\!\int_\Omega (\kappa \nep - \mu \nep^2)\vp,
   \\[2.0mm] \label{weaksolPep2}
%-------------------------  c  -----------------------------
    &-\int^\infty_0\!\!\!\!\int_\Omega \cep \vp_t
     -\int_\Omega c_0\vp(\cdot,0)
     -\int^\infty_0\!\!\!\!\int_\Omega \cep \uep \cdot\na\vp
   \\ \notag
    &\h{5.8mm}=-\int^\infty_0\!\!\!\!\int_\Omega \na \cep \cdot \na\vp
      -\int^\infty_0\!\!\!\!\int_\Omega \frac{1}{\e}(\log(1+\e \nep)) \cep\vp,
   \\[2.0mm] \label{weaksolPep3}
%-------------------------  u  -----------------------------
    &-\int^\infty_0\!\!\!\!\int_\Omega \uep \cdot\psi_t
     -\int_\Omega u_0\cdot\psi(\cdot,0)
     -\int^\infty_0\!\!\!\!\int_\Omega Y_\e \uep \otimes \uep\cdot\na\psi
   \\ \notag
    &\h{5.8mm}=-\int^\infty_0\!\!\!\!\int_\Omega\na \uep \cdot\na\psi
     +\int^\infty_0\!\!\!\!\int_\Omega \nep \na\psi\cdot\na\Phi
  \end{align}
hold. 
Now, since the dominated convergence theorem implies that 
\begin{align*}
  \frac{1}{1+\e \nep} \to 1 
  \quad \mbox{in} \ L^4_{\rm loc} ([0,\infty); L^4(\Omega)) 
  \quad \mbox{as}\ \ep=\ep_j\searrow0, 
\end{align*} 
the convergences of 
$\nep$ in $L^2_{\rm loc}([0,\infty); L^2(\Omega))$ and 
$\nabla \cep$ weakly in $L^4_{\rm loc}([0,\infty); L^4(\Omega))$ 
(see Lemma \ref{convergences}) 
derive  
\begin{equation}\label{conv;nnabalc}
\frac{\nep}{1+\e \nep}\nabla \cep 
= \nep \cdot \frac{1}{1+\e\nep} \cdot \nabla \cep 
\to n\nabla c \quad \mbox{weakly in} \ L^1_{\rm loc}([0,\infty);L^1(\Omega)) 
\end{equation}
as $\ep=\ep_j\searrow0$. 
On the other hand, 
to confirm 
convergence of $\frac{1}{\e} (\log (1+\e \nep)) \cep$ in 
$L^1_{\rm loc}([0,\infty); L^1(\Omega))$  
we shall show that $f_\e (\nep) \to n $ 
in $L^2_{\rm loc}([0,\infty);L^2(\Omega))$ as $\ep=\e_j \searrow 0$, 
where $f_\e (s):= \frac{1}{\e} \log (1+\e s)$ for $s\geq0$. 
Noticing from \eqref{conv;n} that 
\[ 
  |f_\e (n) - n|^2 \to 0 \quad \mbox{a.e.\ in} \ \Omega\times(0, T) 
  \ \mbox{as} \ \ep=\e_j \searrow 0
\]
and from the inequality $f_\e(s) \leq s$ ($s\geq0$) that 
\[
  |f_\e (n) - n|^2 
    \le  2 n^2,  
\]
we deduce from the dominated convergence theorem that 
for all $T>0$, 
\[
  \|f_\e (n) - n\|^2_{L^2(0,T; L^2(\Omega))} 
  = \int_0^T\int_{\Omega} |f_\e (n) - n|^2  \to 0
\]
as $\e=\e_j \searrow 0$.  
Therefore we can see from the inequality 
\[ 0 < f_\e' (s) = \frac{1}{1+\e s} \le 1\] 
and \eqref{conv;n} that for all $T>0$, 
\begin{align*}
  \|f_\e (\nep) - n\|_{L^2(0,T; L^2(\Omega))} 
  & \le 
  \|f_\e (\nep) - f_\e (n)\|_{L^2(0,T; L^2(\Omega))}
  +\|f_\e (n) - n\|_{L^2(0,T; L^2(\Omega))}
\\ 
  &\le f_\e ' (\nep) 
   \|\nep- n\|_{L^2(0,T; L^2(\Omega))}
  +\|f_\e (n) - n\|_{L^2(0,T; L^2(\Omega))} 
\\ 
  & \to 0
\end{align*}
as $\ep= \ep_j \searrow 0$.  
This together with \eqref{conv;c} enables us to obtain that 
\begin{equation}\label{conv;lognc}
  \frac{1}{\e} (\log (1+\e \nep)) \cep = f_\e (\nep) \cep \to nc 
  \quad \mbox{in} \ L^1_{\rm loc}([0,\infty);L^1(\Omega)) 
\end{equation}
as $\ep= \ep_j \searrow 0$. 
Then all convergences in Lemmas \ref{convergences}, \ref{nablaconv} 
as well as \eqref{conv;nnabalc}, \eqref{conv;lognc} 
make sure to pass to the limit in all integrals in \eqref{weaksolPep}--\eqref{weaksolPep3},  
which means that the triplet $(n,c,u)$ is a global weak solution of \eqref{P}. \qed
\end{prth1.1}

%%==============================================================%%
%%==============                                  ==============%%
%%======                      Section4                    ======%%
%%====                                                      ====%%
%%==                                                          ==%%
%%====　　　　　   Existence of solutions to (P) 　　 　    ====%%
%%======                                                  ======%%
%%==============                                  ==============%%
%%==============================================================%%

%%==============================================================%%
%%==============                                  ==============%%
%%======                                                  ======%%
%%====                                                      ====%%
%%==                         Reference                        ==%%
%%====　　　　                            　　　 　         ====%%
%%======                                                  ======%%
%%==============                                  ==============%%
%%==============================================================%%
%\newpage
%\bibliographystyle{plain}
%\bibliography{bibfile}% References

\begin{thebibliography}{10}

\bibitem{Xinru_higher}
X.~Cao.
\newblock Global bounded solutions of the higher-dimensional {K}eller--{S}egel
  system under smallness conditions in optimal spaces.
\newblock {\em Discrete Contin.\ Dyn.\ Syst.}, 35:1891--1904, 2015.

\bibitem{CKM}
X.~Cao, S.~Kurima, and M.~Mizukami.
\newblock Global existence and asymptotic behavior of classical solutions for a
  {3D} two-species chemotaxis-{S}tokes system with competitive kinetics.
\newblock {\em Math.\ Methods Appl.\ Sci.}, to appear, arXiv:1703.01794
  [math.AP].

\bibitem{Chung-Kang_2016}
Y.~Chung and K.~Kang.
\newblock Existence of global solutions for a chemotaxis-fluid system with
  nonlinear diffusion.
\newblock {\em J. Math.\ Phys.}, 57:19 pp, 2016.

\bibitem{Dunford-Schwartz}
N.~Dunford and J.~T. Schwartz.
\newblock {\em Linear Operators. I. General Theory}.
\newblock Pure and Applied Mathematics, 1958.

\bibitem{Hashira-Ishida-Yokota}
T.~Hashira, S.~Ishida, and T.~Yokota.
\newblock Finite-time blow-up for quasilinear degenerate {K}eller--{S}egel
  systems of parabolic--parabolic type.
\newblock {\em J. Differential Equations}, to appear.

\bibitem{Hillen_Painter_2009}
T.~Hillen and K.~J. Painter.
\newblock A user's guide to {PDE} models for chemotaxis.
\newblock {\em J. Math.\ Biol.}, 58:183--217, 2009.

\bibitem{HKMY_1}
M.~Hirata, S.~Kurima, M.~Mizukami, and T.~Yokota.
\newblock Boundedness and stabilization in a two-dimensional two-species
  chemotaxis-{N}avier--{S}tokes system with competitive kinetics.
\newblock {\em J. Differential Equations}, 263:470--490, 2017.

\bibitem{Horstmann-Wang}
D.~Horstmann and G.~Wang.
\newblock Blow-up in a chemotaxis model without symmetry assumptions.
\newblock {\em Eur.\ J.\ Appl.\ Math.}, 12:159--177, 2001.

\bibitem{Ishida-Seki-Yokota}
S.~Ishida, K.~Seki, and T.~Yokota.
\newblock Boundedness in quasilinear {K}eller--{S}egel systems of
  parabolic--parabolic type on non-convex bounded domains.
\newblock {\em J. Differential Equations}, 256:2993--3010, 2014.

\bibitem{Ishida-Yokota_2013}
S.~Ishida and T.~Yokota.
\newblock Blow-up in finite or infinite time for quasilinear degenerate
  {K}eller--{S}egel systems of parabolic--parabolic type.
\newblock {\em Discrete Contin.\ Dyn.\ Syst.\ Ser.\ B}, 18:2569--2596, 2013.

\bibitem{Jin-2017}
C.~Jin.
\newblock Boundedness and global solvability to a chemotaxis model with
  nonlinear diffusion.
\newblock {\em J. Differential Equations}, 263:5759--5772, 2017.

\bibitem{K-S}
E.~F. Keller and L.~A. Segel.
\newblock Initiation of slime mold aggregation viewed as an instability.
\newblock {\em J. Theor.\ Biol.}, 26:399--415, 1970.

\bibitem{lankeit_evsmoothness}
J.~Lankeit.
\newblock Eventual smoothness and asymptotics in a three-dimensional chemotaxis
  system with logistic source.
\newblock {\em J. Differential Equations}, 258:1158--1191, 2015.

\bibitem{Lankeit_2016}
J.~Lankeit.
\newblock Long-term behaviour in a chemotaxis-fluid system with logistic
  source.
\newblock {\em Math.\ Models Methods Appl.\ Sci.}, 26:2071--2109, 2016.

\bibitem{Lankeit-Wang_2017}
J.~Lankeit and Y.~Wang.
\newblock Global existence, boundedness and stabilization in a high-dimensional
  chemotaxis system with consumption.
\newblock {\em Discrete Contin.\ Dyn.\ Syst.}, 37:6099--6121, 2017.

\bibitem{Mizoguchi-Winkler}
N.~Mizoguchi and M.~Winkler.
\newblock Blow-up in the two-dimensional parabolic {K}eller--{S}egel system.
\newblock preprint.

\bibitem{Nagai-Senba-Yoshida}
T.~Nagai, T.~Senba, and K.~Yoshida.
\newblock Application of the {T}rudinger-{M}oser inequality to a parabolic
  system of chemotaxis.
\newblock {\em Funkcial.\ Ekvac.}, 40:411--433, 1997.

\bibitem{OTYM}
K.~Osaki, T.~Tsujikawa, A.~Yagi, and M.~Mimura.
\newblock Exponential attractor for a chemotaxis-growth system of equations.
\newblock {\em Nonlinear Anal.}, 51:119--144, 2002.

\bibitem{Osaki-Yagi}
K.~Osaki and A.~Yagi.
\newblock Finite dimensional attractor for one-dimensional {K}eller--{S}egel
  equations.
\newblock {\em Funkcial.\ Ekvac.}, 44:441--469, 2001.

\bibitem{Simon_1987}
J.~Simon.
\newblock Compact sets in the space {$L^p(0,T;B)$}.
\newblock {\em Ann.\ Mat.\ Pura Appl.}, 146:65--96, 1987.

\bibitem{Tao-Winkler_2011_non}
Y.~Tao and M.~Winkler.
\newblock A chemotaxis-haptotaxis model: the roles of nonlinear diffusion and
  logistic source.
\newblock {\em SIAM J. Math.\ Anal.}, 43:685--704, 2011.

\bibitem{Tao-Winkler_2012}
Y.~Tao and M.~Winkler.
\newblock Boundedness in a quasilinear parabolic-parabolic {K}eller--{S}egel
  system with subcritical sensitivity.
\newblock {\em J. Differential Equations}, 252:692--715, 2012.

\bibitem{Tao-Winkler_2012_KSF}
Y.~Tao and M.~Winkler.
\newblock Global existence and boundedness in a {K}eller--{S}egel-{S}tokes
  model with arbitrary porous medium diffusion.
\newblock {\em Discrete Contin.\ Dyn.\ Syst.}, 32:1901--1914, 2012.

\bibitem{win_aggregationvs}
M.~Winkler.
\newblock Aggregation vs.\ global diffusive behavior in the higher-dimensional
  {K}eller--{S}egel model.
\newblock {\em J. Differential Equations}, 248:2889--2905, 2010.

\bibitem{Winkler_2010_logistic}
M.~Winkler.
\newblock Boundedness in the higher-dimensional parabolic-parabolic chemotaxis
  system with logistic source.
\newblock {\em Comm.\ Partial Differential Equations}, 35:1516--1537, 2010.

\bibitem{W-2012}
M.~Winkler.
\newblock Global large-data solutions in a chemotaxis-({N}avier--){S}tokes
  system modeling cellular swimming in fluid drops.
\newblock {\em Comm.\ Partial Differential Equations}, 37:319--351, 2012.

\bibitem{Winkler_2013_blowup}
M.~Winkler.
\newblock Finite-time blow-up in the higher-dimensional parabolic-parabolic
  {K}eller--{S}egel system.
\newblock {\em J. Math.\ Pures Appl.}, 100:748--767, 2013.

\bibitem{W2014}
M.~Winkler.
\newblock Global asymptotic stability of constant equilibria in a fully
  parabolic chemotaxis system with strong logistic dampening.
\newblock {\em J. Differential Equations}, 257:1056--1077, 2014.

\bibitem{Winkler_2016}
M.~Winkler.
\newblock Global weak solutions in a three-dimensional
  chemotaxis-{N}avier--{S}tokes system.
\newblock {\em Ann.\ Inst.\ H.\ Poincar{\'e} Anal.\ Non Lin{\'e}aire},
  33:1329--1352, 2016.

\bibitem{Zheng-Wang_2016}
J.~Zheng and Y.~Wang.
\newblock Boundedness and decay behavior in a higher-dimensional quasilinear
  chemotaxis system with nonlinear logistic source.
\newblock {\em Comput.\ Math.\ Appl.}, 72:2604--2619, 2016.

\end{thebibliography}

\end{document}